\definecolor{red}{rgb}{0.7,0.15,0.15}
\definecolor{green}{rgb}{0,0.5,0}
\definecolor{blue}{rgb}{0,0,0.7}
\makeatletter \@addtoreset{equation}{section}
\newtheorem{theorem}{Theorem}[section]
\newtheorem{assumption}[theorem]{Assumption}
\newtheorem{corollary}[theorem]{Corollary}
\newtheorem{lemma}[theorem]{Lemma}
\newtheorem{proposition}[theorem]{Proposition}
\newtheorem{remark}[theorem]{Remark}
\newcommand\cA{\mathcal A}
\newcommand\cC{\mathcal C}
\newcommand\cE{\mathcal E}
\newcommand\cF{\mathcal F}
\newcommand\cH{\mathcal H}
\newcommand\cP{\mathcal P}
\newcommand\cU{\mathcal U}
\newcommand\PP{\mathbb P}
\def \E{\mathbb{E}}
\def \F{\mathbb{F}}
\def \H{\mathbb{H}}
\def \N{\mathbb{N}}
\def \P{\mathbb{P}}
\def \Q{\mathbb{Q}}
\def \R{\mathbb{R}}
\def \S{\mathbb{S}}
\def\Ac{\mathcal{A}}
\def\Cc{\mathcal{C}}
\def\Ec{\mathcal{E}}
\def\Fc{\mathcal{F}}
\def\Uc{\mathcal{U}}
\newcommand{\x}{\mathbf{x}}
\newcommand{\xdim}{m}
\newcommand{\bmdim}{d}
\newcommand{\smalltext}[1]{\text{\fontsize{4}{4}\selectfont$#1$}}
\newcommand{\tinytext}[1]{\text{\fontsize{3}{3}\selectfont$#1$}}
\def\eps{\varepsilon}
\def\d{\mathrm{d}}
\DeclareMathOperator*{\argmax}{arg\,max}
\begin{document}
\title{A policy iteration algorithm for non-Markovian control problems}
\author{Dylan Possama\"i\footnote{ETH Z\"urich, Mathematics department, Switzerland. dylan.possamai@math.ethz.ch. This author gratefully acknowledges the support of the SNF project MINT 205121-21981.} $\qquad \qquad$  Ludovic Tangpi\footnote{Princeton University, ORFE and PACM, USA. ludovic.tangpi@princeton.edu. Research partially funded by the NSF CAREER award DMS-2143861.}}

\date{\today}

\maketitle

\abstract{
In this paper, we propose a new policy iteration algorithm to compute the value function and the optimal controls of continuous-time stochastic control problems. The algorithm relies on successive approximations using linear--quadratic control problems which can all be solved explicitly, and only require to solve recursively linear PDEs in the Markovian case. Though our procedure fails in general to produce a non-decreasing sequence like the standard algorithm, it can be made arbitrarily close to being monotone. More importantly, we recover the standard exponential speed of convergence for both the value and the controls, through purely probabilistic arguments which are significantly simpler than in the classical case. Our proof also accommodates non-Markovian dynamics as well as volatility control, allowing us to obtain the first convergence results in the latter case for a state process in multi-dimensions.
}
\setlength{\parindent}{0pt}



\allowdisplaybreaks

\section{Introduction}\label{sec:intro}

An important problem in practical applications of optimal stochastic control theory is the numerical simulation of the value of the problem, as well as optimal control strategies, provided they exist.
This task is typically very challenging and not amenable to standard methods because optimal stochastic control problems are inherently non-convex, infinite dimensional optimisation problems.
A popular alternative consists in transforming the stochastic control problem into a partial differential equation, the celebrated Hamilton--Jacobi--Bellman (HJB for short) equation, or into a forward--backward stochastic differential equation (FBSDE for short).
Unfortunately, both HJB equations and FBSDEs are typically hard to solve beyond linear and toy models.
In fact, the HJB equation is nonlinear and most numerical algorithm are efficient only in small dimensions, whilst FBSDEs are typically efficiently solvable in short time horizons.

\medskip
A well-known approach allowing to get around these difficulties is the so called \emph{policy iteration algorithm} (PIA for short).
This consists in iteratively constructing optimal control strategies along maximisers of the Hamiltonian of the problem.
This procedure allows to approximate the HJB equation by a sequence of linear PDEs, we refer the reader to the seminal papers by \citeauthor*{bellman1954theory} \cite{bellman1954theory,bellman1957dynamic}, \citeauthor*{howard1960dynamic} \cite{howard1960dynamic}, \citeauthor*{puterman1979convergence} \cite{puterman1979convergence}, or \citeauthor*{puterman1994markov} \cite{puterman1994markov,puterman1981convergence}, as well as the more recent takes by \citeauthor*{bertsekas2015value} \cite{bertsekas2015value}, \citeauthor*{jacka2017policy} \cite{jacka2017policy}, {\citeauthor*{kerimkulov2020exponential} \cite{kerimkulov2020exponential}} and {\citeauthor*{kerimkulov2021modified} \cite{kerimkulov2021modified}} for standard control problems, and extensions to so-called mean-field games by \citeauthor*{anahtarci2020value} \cite{anahtarci2020value}, \citeauthor*{cacace2021policy} \cite{cacace2021policy}, and \citeauthor*{camilli2022rates} \cite{camilli2022rates}.
The interest of the research community in PIA has been rekindled by recent progress in reinforcement learning, which have highlighted the power of entropy regularisation in the approximation of stochastic control problems.
See for instance {\citeauthor*{reisinger2021regularity} \cite{reisinger2021regularity}}, {\citeauthor*{wang2020reinforcement} \cite{wang2020reinforcement}}, \citeauthor*{firoozi2022exploratory} \cite{firoozi2022exploratory}, {\citeauthor*{guo2022entropy} \cite{guo2022entropy}}, \citeauthor*{tang2022exploratory} \cite{tang2022exploratory}, \citeauthor*{wei2023continuous2} \cite{wei2023continuous2,wei2023continuous}, \citeauthor*{tran2024policy} \cite{tran2024policy}, or \citeauthor*{wei2024unified} \cite{wei2024unified}.

\medskip.
In this context, a powerful approach recently developed by {\citeauthor*{huang2024convergence} \cite{huang2024convergence}} and {\citeauthor*{ma2024convergence} \cite{ma2024convergence}} begins by considering the \emph{relaxed} version of the control problem, then penalise the objective function using Shannon's entropy.
The penalised problem admits then a unique optimiser given as a Gibbs measure.
The key idea is then to use the density function to construct an iterative scheme that allows to approximate the value (and sometimes  an optimal control) of the penalised problem by a sequence of linear PDEs.
Using PDE techniques, notably based on gradient estimates for second order parabolic PDEs, {\citeauthor*{huang2024convergence} \cite{huang2024convergence}} derived an exponential rate of convergence of the solutions of the linear approximating PDEs to the value function.
Much easier proofs have been recently given by {\citeauthor*{ma2024convergence} \cite{ma2024convergence}} based on ideas reminiscent of the Bismut--Elworthy--Li formula, see \cite{elworthy1994formulae}.

\medskip

The present paper provides a new take on PIA.
We propose a new policy iteration algorithm that is built on fully probabilistic arguments and that allows to approximate both the value of the problem and optimal control strategies.
The method we discuss is standard for policy iteration algorithms studied for instance by {\citeauthor*{kerimkulov2020exponential} \cite{kerimkulov2020exponential}}, in the sense that we approximate candidate optimisers along extremal points of the Hamiltonian.
However, we additionally consider a vanishing penalisation term that allows to speedup the approximation algorithm by explicitly solving the approximating equations.
This allows to approximate the value function (and the optimal control) by explicitly given functions. The main features of the new methods are thus the following:
\begin{enumerate}
	\item[$(i)$] the method is fully probabilistic, and works for non-Markovian control problem.
	In the Markovian case, it has a natural PDE formulation;
	\item[$(ii)$] the method does not rely on relaxation of the control problem. As such, we approximate the optimal (strict) control strategies, when they exist;
	\item[$(iii)$] we do not approximate a penalised problem, but rather the actual stochastic control problem, hence bypassing an additional approximation step.
\end{enumerate}
We will provide a comprehensive comparison of our method to the existing literature in \Cref{sec:comparison}, so that we would rather discuss the major takeaways of the paper at this point.
In our main result, we recover the exponential convergence rate previously obtained in the PIA literature for the value function of the control problem, and we believe our proofs are simple, avoiding subtle and technical PDE results.
Assuming boundedness of the control process $Z$ of the BSDE associated to the control problem, rather straightforward BSDE estimates allow to derive the convergence rate.
When looking at the weak formulation of the problem, the same conditions allow to approximate optimal control processes.
Under stronger conditions, notably in the Markovian case this rate is also valid for the approximation of optimal controls in the strong formulation of the problem.
We should stress here that standard assumptions as those in {\citeauthor*{huang2024convergence} \cite{huang2024convergence}} and {\citeauthor*{ma2024convergence} \cite{ma2024convergence}} guarantee boundedness of the process $Z$ mentioned above.
Moreover, the algorithm discussed in the present paper and our method of proof lend themselves to the study of problems with controlled volatility.
In this setting, we derive an exponential convergence rate under a smallness condition on the coefficients which is automatically satisfied for instance when the terminal reward function is zero, or under some separation condition on the coefficients.
This is an important contribution in our view, as the case of controlled volatility is particularly difficult, and to the best of our knowledge, only {\citeauthor*{ma2024convergence} \cite{ma2024convergence}} treats this case, focusing on one-dimensional dynamics. In particular, our probabilistic arguments allow to avoid difficulties coming with analysing fully nonlinear HJB equation arising from controlled volatility problems.
However, this approach implicitly benefits from the theory of second order BSDEs as in {\citeauthor*{cvitanic2018dynamic} \cite{cvitanic2018dynamic}}.
We refer the reader to \citeauthor*{soner2012wellposedness} \cite{soner2012wellposedness}, \citeauthor*{possamai2018stochastic} \cite{possamai2018stochastic}, or \citeauthor*{zhang2017backward} \cite[Section 12]{zhang2017backward} for a nice exposition.

\medskip

The structure of the rest of the paper is as follows:
in the next section we state the problem and present the main results.
We begin by the uncontrolled volatility case, then we explain how our algorithm can be viewed as an entropy penalisation method and specialise the result to the Markovian case.
We conclude the section by stating the result for the controlled volatility case.
\Cref{sec:proofs} is dedicated to the proofs of the main results.
For the reader's convenience, we present some well-known BSDE characterisations of optimal control problems in the appendix.

\section{Problem statement and main results}
\label{sec:problem.results}

We fix three positive integers $\xdim$, $\bmdim$ and $k$, as well as a finite time horizon $T>0$.
On a probability space $(\Omega,\Fc,\P)$ carrying an $\R^d$-valued Brownian motions $B$, we denote by $\F\coloneqq (\Fc_t)_{t\in[0,T]}$ the $\P$-completed natural filtration of $B$. 
Let $\cC_m$ denote the space of $\R^m$-valued continuous functions on $[0,T]$ equipped with the uniform norm $\|\x\|_\infty \coloneqq  \sup_{t\in [0,T]}\|x_t\|$.
In this paper, we assume to be given a compact subset $U$ of $\R^k$, that we will call the action space, an $\cF_0$-measurable random variable $\xi$ that will be the initial position of the state process as well as four Borel-measurable functions
\begin{align*}
	L&:[0,T]\times \cC_m \times U \longrightarrow \R,\; b: [0,T]\times \cC_m\times U\longrightarrow \R^d,\; 
	\sigma: [0,T]\times \cC_m\times U\longrightarrow \R^{m\times d},\;
	G:\cC_m\longrightarrow \R.
\end{align*}
We will study a form of policy iteration algorithm for stochastic control problems with running and terminal costs respectively given by $L$ and $G$, and state process with coefficients $b$ and $\sigma$.
The set of admissible controls, denoted $\Uc$ is given by
\begin{equation*}
	\Uc \coloneqq \big\{\nu:[0,T]\times\Omega\longrightarrow U: \F\text{--progressively measurable} \big\}.
\end{equation*}
We are interested in the non-Markovian optimal stochastic control problem
\begin{align}
\label{eq:OG.problem}
	V\coloneqq \sup_{\nu \in \Uc}\E^\P\bigg[G(X^\nu) - \int_0^TL\big(s, X^\nu_{\cdot\wedge s}, \nu_s\big)\mathrm{d}s \bigg],
\end{align}
subject to the dynamics
\begin{equation}
\label{eq:controlled.SDE}
	X^\nu_t =\xi+ \int_0^t\sigma\big(s,X^\nu_{\cdot\wedge s},\nu_s\big)b\big(s,X_{\cdot\wedge s}^\nu, \nu_s\big)\mathrm{d}s + \int_0^t\sigma\big(s,X^\nu_{\cdot\wedge s},\nu_s\big)\mathrm{d}B_s, t\in[0,T],\; \P\text{\rm--a.s.}
\end{equation}
We will give conditions below guaranteeing that this problem is well-posed and that  $V<\infty$.
In our analysis, we will separately consider the cases where the volatility coefficient $\sigma$ is uncontrolled (\emph{i.e.} does not depend on the third argument) and the case where it is.

\subsection{The case of uncontrolled volatility}
\label{sec:no.vol}
In this subsection, let us assume that $\sigma:[0,T]\times \cC_m\longrightarrow \R^{m\times d}$ does not depend on the control argument $u$.
In this case, the---reduced---Hamiltonian $h$ of the problem is given by
\begin{equation*}
	h(t,\x,z,u) \coloneqq b(t, \x, u)\cdot z - L(t, \x,u),\; (t,\x,z,u)\in[0,T]\times\Cc_m\times\R^d\times U,
\end{equation*}
and the optimised Hamiltonian by
\begin{equation*}
	H(t,\x, z) \coloneqq  \sup_{u\in U}\big\{h(t, \x,z,u)\big\}, (t.\x,z)\in[0,T]\times\Cc_m\times\R^d.
\end{equation*}
We will work under the following conditions.
\begin{assumption}\label{assump:standing}
	\begin{enumerate}
	\item[$(i)$] the functions $b$ and $\sigma$ are bounded$;$
	\item[$(ii)$] the functions $L$ and $G$ are of sub-quadratic growth in the sense that there is some $C>0$ and some $\eps\in(0,2]$ such that
	\begin{equation*}
		|G(\x)| + |L(t,\x,u)| \le C\big(1+\|\x\|_\infty^{2-\varepsilon}\big),\; \forall (t,\x,u)\in [0,T]\times \cC_m\times U,
	\end{equation*}
	and the functions $L$ and $G$ are lower-semicontinuous in $(\x,u)$ and $\x$, respectively$;$
	\item[$(iii)$] the functions $\sigma b$ and $\sigma$ are Lipschitz-continuous in $\x$ uniformly in the other arguments$;$
	\item[$(v)$] there is a Borel-measurable map $u^\star:[0,T]\times\Cc_m\times\R^d\longrightarrow U$ satisfying
	\begin{equation}
	\label{eq:argmax.ass}
	 	u^\star(t,\x,z) \in \argmax_{u\in U}\big\{h(t, \x,z,u)\big\} ,\; (t,\x,z)\in[0,T]\times\Cc_m\times\R^d,
	 \end{equation} 
	 which is in addition Lipschitz-continuous in its last argument, uniformly in the others.
\end{enumerate}
\end{assumption}
In the rest of this section we fix $X$ to be the unique strong solution of the driftless SDE 
\begin{equation}\label{eq:driftless}
		X_t= \xi +\int_0^t\sigma(s,X_{\cdot\wedge s})\mathrm{d}B_s,\; t\in[0,T],\; \P\text{\rm--a.s.}
\end{equation}
It is not hard to convince oneself that under the above conditions, the value $V$ of the control problem is finite, since the following estimate holds.
\begin{lemma}\label{lem:estimX}
For any $p\geq 1$, we have
\[
\E^\P\bigg[\exp\bigg(p\sup_{t\in[0,T]}\|X_t\|^{2-\eps}\bigg)\bigg]<+\infty.
\]
\end{lemma}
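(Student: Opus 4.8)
\emph{Proof sketch.} The plan is to exploit that the exponent $2-\eps$ is \emph{strictly} subquadratic: it then suffices to establish a Gaussian concentration bound for $\sup_{t\in[0,T]}\|X_t\|$, and boundedness of $\sigma$ (the dynamics being driftless) is precisely what delivers such a bound.

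\textbf{Step 1: reduction to a deterministic starting point.} Since $\F_0$ is $\P$-trivial, being the $\P$-completion of the natural filtration of $B$ at time $0$, the random variable $\xi$ is $\P$-a.s.\ equal to a constant $x_0\in\R^m$. Set $M\coloneqq X-\xi=\int_0^{\cdot}\sigma(s,X_{\cdot\wedge s})\,\mathrm{d}B_s$ and $Z\coloneqq\sup_{t\in[0,T]}\|M_t\|$. From $\|X_t\|\le\|x_0\|+\|M_t\|$ and $2-\eps\le 2$ one obtains a constant $C'$, depending only on $x_0$ and $\eps$, with $\sup_{t\in[0,T]}\|X_t\|^{2-\eps}\le C'(1+Z^{2-\eps})$, so it is enough to prove $\E^\P[\exp(qZ^{2-\eps})]<\infty$ for every $q\ge 1$.

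\textbf{Step 2: Gaussian tail for $Z$.} By \Cref{assump:standing}$(i)$ there is $\Lambda>0$ with $\sum_{i=1}^m\sum_{j=1}^d\sigma^{ij}(t,\x)^2\le\Lambda^2$ for all $(t,\x)$, hence each component $M^i$ is a continuous martingale started at $0$ with $\langle M^i\rangle_t\le\Lambda^2T$ for all $t\in[0,T]$. By the Dambis--Dubins--Schwarz theorem $M^i_t=\beta^i_{\langle M^i\rangle_t}$ for some Brownian motion $\beta^i$, so $\sup_{t\le T}|M^i_t|\le\sup_{s\le\Lambda^2T}|\beta^i_s|$, and the reflection principle together with a standard Gaussian tail estimate gives $\P[\sup_{t\le T}|M^i_t|\ge\lambda]\le 4\exp(-\lambda^2/(2\Lambda^2T))$ for all $\lambda>0$. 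Using $\|M_t\|\le\sqrt m\,\max_i|M^i_t|$ and a union bound, there are $c_1,c_2>0$, depending only on $m,\Lambda,T$, with
\begin{equation*}
	\P[Z\ge\lambda]\le c_1\exp(-c_2\lambda^2),\quad\lambda>0.
\end{equation*}
(Equivalently, one may apply Doob's inequality to the supermartingale $\exp(\alpha M^i_t-\tfrac{\alpha^2}{2}\langle M^i\rangle_t)$ and optimise over $\alpha>0$.)

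\textbf{Step 3: from the tail to the exponential moment.} Fix $q\ge 1$. Since $2-\eps<2$, the function $\lambda\mapsto q\lambda^{2-\eps}-\tfrac{c_2}{2}\lambda^2$ is bounded above on $[0,\infty)$ by some $C_q<\infty$, whence
\begin{equation*}
	\E^\P\big[\e^{qZ^{2-\eps}}\big]\le\e^{C_q}\,\E^\P\big[\e^{(c_2/2)Z^2}\big]=\e^{C_q}\bigg(1+c_2\int_0^\infty\lambda\,\e^{(c_2/2)\lambda^2}\,\P[Z\ge\lambda]\,\mathrm{d}\lambda\bigg)<\infty,
\end{equation*}
the integral converging thanks to Step 2. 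Applying this with $q=pC'$ and combining with Step 1 gives the claim. The only part of genuine content is the concentration estimate of Step 2; everything else is bookkeeping, and I expect the ``obstacle'' there to be merely choosing the cleanest presentation, since the uniform control $\langle M^i\rangle_T\le\Lambda^2T$ reduces $\sup_{t\le T}|M^i_t|$ to the running maximum of a Brownian motion over a fixed interval, whose law is explicit.
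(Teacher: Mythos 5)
Your proof is correct, but it takes a genuinely different (and more self-contained) route than the paper. The paper's proof is a two-line argument by convex domination: since $\sigma$ is bounded by $M$ and $x\longmapsto\exp(p\|x\|^{2-\eps})$ is convex, it invokes \citeauthor*{pages2014convex} \cite[Proposition 4]{pages2014convex} to bound $\E^\P[\exp(p\sup_t\|X_t\|^{2-\eps})]$ directly by $\E^\P[\exp(pM^{2-\eps}\sup_t\|B_t\|^{2-\eps})]$, and then concludes from finiteness of subquadratic exponential moments of the Brownian running maximum. You instead prove the needed Gaussian concentration from scratch: componentwise time change (or the exponential supermartingale plus Doob), a reflection-principle tail bound $\P[Z\ge\lambda]\le c_1\e^{-c_2\lambda^2}$, and then the elementary observation that $q\lambda^{2-\eps}-\tfrac{c_2}{2}\lambda^2$ is bounded above because $\eps>0$, followed by a layer-cake integration. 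Both arguments hinge on exactly the same two ingredients (boundedness of $\sigma$ and strict subquadraticity of the exponent), so what you buy is independence from the external convex-ordering result at the cost of a longer write-up; what the paper buys is brevity. Your Step 1 is fine (and in fact slightly more careful than the paper, whose displayed comparison silently absorbs $\xi$): since $\F_0$ is the $\P$-completion of the trivial $\sigma$-algebra generated by $B_0$, the initial condition is indeed $\P$-a.s.\ constant. Two cosmetic remarks: the Dambis--Dubins--Schwarz representation with $\langle M^i\rangle_\infty<\infty$ requires an enlargement of the probability space (your parenthetical supermartingale argument avoids this entirely), and the constant $c_1$ from the tail bound is missing in your final display, though this affects nothing.
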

\begin{proof}
Given that $\sigma$ is bounded by some constant $M>0$, and since the map $x\longmapsto \exp(p\|x\|^{2-\eps})$ is convex for $p>0$, we can use \citeauthor*{pages2014convex} \cite[Proposition 4]{pages2014convex} to deduce that
\[
\E^\P\bigg[\exp\bigg(p\sup_{t\in[0,T]}\|X_t\|^{2-\eps}\bigg)\bigg]\leq \E^\P\bigg[\exp\bigg(pM^{2-\eps}\sup_{t\in[0,T]}\|B_t\|^{2-\eps}\bigg)\bigg]<+\infty,
\]
where the finiteness of the right-hand side above is immediate from properties of the Gaussian distribution.
\end{proof}

We are now ready to define the policy iteration algorithm that we introduce in this work.
It proceeds as follows: we fix a Borel-measurable function $u^\star:[0,T]\times \cC_m\times \R^d\longrightarrow U$ from \Cref{assump:standing}.$(v)$, and a non-decreasing function $\varphi:\R_+ \longrightarrow \R_+^\star$ such that $\varphi(0) = 1$.
We also let $\Ac$ be the set of $\R^d$-valued, $\F$--progressively measurable processes $\alpha$ such that the stochastic exponential $\Ec\big(\int_0^\cdot \alpha_s\cdot\mathrm{d}B_s\big)$ is a uniformly integrable $(\F,\P)$-martingale on $[0,T]$.

\paragraph*{Step $0$.}
Consider the control problem
\begin{equation*}
	\begin{cases}
	\displaystyle V^0 \coloneqq  \sup_{\alpha \in \cA}\E^{\PP^{\smalltext{\alpha}\smalltext{,}\smalltext{0}}}\bigg[G(X) - \int_0^T\bigg(\frac12\|\alpha_s\|^2 + L(s,X_{\cdot\wedge s},0)\bigg)\mathrm{d}s  \bigg],\\[0.8em]
	\displaystyle\frac{\mathrm{d}\PP^{{\alpha,0}}}{\mathrm{d}\PP}\coloneqq \cE\bigg(\int_0^\cdot \big( b(s,X_{\cdot\wedge s},0) + \alpha_s\big)\cdot \mathrm{d} B_s\bigg)_T.
	\end{cases}
\end{equation*}
By \Cref{prop:Bellman2}, we have $V^0 = Y^0_0$ where $( Y^0,Z^0)$ solves the BSDE
\begin{equation}
\label{eq:BSDE0}
	Y^0_t = F^0(X) + \int_t^T\bigg(\frac12\|Z_s^0\|^2 + b(s,X_{\cdot\wedge s},0)\cdot Z_s^0\bigg) \mathrm{d}s - \int_t^TZ_s^0\cdot \mathrm{d}B_s,\; t\in[0,T],\; \P\text{\rm--a.s.},
\end{equation}
with
\begin{equation*}
	F^0(X) \coloneqq  G(X) - \int_0^TL(s,X_{\cdot\wedge s},0)\mathrm{d}s.
\end{equation*}
The value process $Y^0$ of this equation can be computed explicitly.
Indeed, as we will see below, a change of measures and an application of It\^o's formula yield
\begin{equation*}
	Y^0_t = \log\Big(\E^{\PP^\smalltext{0}}\big[\exp\big(F^0(X)\big) \big|\cF_t \big] \Big),\; t\in[0,T],\; \text{with}\; \frac{\mathrm{d}\PP^0}{\mathrm{d}\PP} \coloneqq  \cE\bigg(\int_0^\cdot b(s,X_{\cdot\wedge s},0)\cdot \mathrm{d}B_s\bigg)_T.
\end{equation*}

\paragraph*{Step $n\in\N^\star$.}
Given that the construction from step $0$ has been iterated until the step $n-1$ for some $n\in\N^\star$, giving us in particular a collection of pair of processes $(Y^k,Z^k)_{k\in\{0,\dots,n-1\}}$, we consider the control problem
\begin{equation}
\label{eq:problem.n}
	\begin{cases}
		\displaystyle V^n \coloneqq  \sup_{\alpha\in \cA}\E^{\PP^{\smalltext{\alpha}\smalltext{,}\smalltext{n}}}\bigg[G(X) - \int_0^T\bigg(\frac12\|\alpha_s\|^2 + L\big(s,X_{\cdot\wedge s}, u^\star(s, X_{\cdot\wedge s}, Z^{n-1}_s)\big)\bigg)\mathrm{d}s \bigg],\\[0.8em]
	\displaystyle	\frac{\mathrm{d}\PP^{\alpha,n}}{\mathrm{d}\PP}\coloneqq \cE\bigg(\int_0^\cdot \bigg( b\big(s,X_{\cdot\wedge s},u^\star(s, X_{\cdot\wedge s}, Z^{n-1}_s)\big) + \frac{\alpha_s}{\varphi^{1/2}(n)}\bigg)\cdot \mathrm{d} B_s\bigg)_T.
	\end{cases}
\end{equation}
Then, by \Cref{prop:Bellman2} the following BSDE admits a unique solution (in appropriate spaces)
\begin{equation}
\label{eq:BSDE.n}
	Y^n_t = F^n(X) + \int_t^T\bigg(\frac{1}{2\varphi(n)}\|Z_s^n\|^2+ b\big(s,X_{\cdot\wedge s},u^\star(s, X_{\cdot\wedge s}, Z^{n-1}_s)\big)\cdot Z^n_s\bigg)\mathrm{d}s - \int_t^TZ_s^n\cdot \mathrm{d}B_s, \; t\in [0,T], \; \PP\text{\rm--a.s.},
\end{equation}
where
\begin{equation*}
	F^n(X) \coloneqq G(X) - \int_0^TL\big(s,X_{\cdot\wedge s},u^\star(s, X_{\cdot\wedge s}, Z^{n-1}_s)\big)\mathrm{d}s,
\end{equation*}
and the problem \eqref{eq:problem.n} is such that $V^n= Y^n_0$. The value process $Y^n$ is again explicitly computed as
\begin{equation}
\label{eq:def.Pn}
	 Y^n_t = \varphi(n)\log\bigg(\E^{\PP^\smalltext{n}}\bigg[\exp\bigg(\frac{F^n(X)}{\varphi(n)}\bigg)\bigg|\cF_t\bigg] \bigg),\; t\in[0,T],\; \text{with}\; \frac{\mathrm{d}\PP^{n}}{\mathrm{d}\PP}\coloneqq \cE\bigg(\int_0^\cdot  b\big(s,X_{\cdot\wedge s},u^\star(s, X_{\cdot\wedge s}, Z^{n-1}_s)\big)\cdot \mathrm{d} B_s\bigg)_T.
\end{equation}
Observe that the iteration scheme $(V^n, Y^n,Z^n,\alpha^n)_{n\in\N}$ depends on the choice of $u^\star$ and $\varphi$ and that for each such $u^\star$ and $\varphi$, it is well-defined.

\medskip

Before going any further, let us stress that the computation of $Y^n$ is almost explicit, as it reduces to computation of conditional expectations, provided that one knows $Z^{n-1}$ of course. This can be achieved extremely efficiently numerically, using ones favourite method for solving BSDEs, for which the literature is vast, parsing Monte-Carlo techniques, branching processes, or machine learning (see among many others \citeauthor*{bouchard2004discrete} \cite{bouchard2004discrete}, \citeauthor*{zhang2004numerical} \cite{zhang2004numerical}, \citeauthor*{gobet2005regression} \cite{gobet2005regression}, \citeauthor*{gobet2016approximation} \cite{gobet2016approximation,gobet2017adaptive}, \citeauthor*{gobet2016stratified} \cite{gobet2016stratified}, \citeauthor*{gobet2020quasi} \cite{gobet2020quasi}, \citeauthor*{crisan2014second} \cite{crisan2010solving,crisan2012solving,crisan2014second}, \citeauthor*{chassagneux2014linear} \cite{chassagneux2014linear}, \citeauthor*{chassagneux2014runge} \cite{chassagneux2014runge}, \citeauthor*{possamai2015weak} \cite{possamai2015weak},  \citeauthor*{bouchard2016numerical}, \cite{bouchard2016numerical}, \citeauthor*{bouchard2019numerical} \cite{bouchard2019numerical}, \citeauthor*{e2017deep} \cite{e2017deep}, \citeauthor*{hutzenthaler2020proof} \cite{hutzenthaler2020proof}, \citeauthor*{hure2020deep} \cite{hure2020deep}, \citeauthor*{germain2020deep} \cite{germain2020deep}, and \citeauthor*{beck2023overview} \cite{beck2023overview}. 

\medskip
We emphasise that is is also possible to compute $Z^{n-1}$ in terms of $Y^{n-1}$ using techniques from Malliavin calculus, see \emph{e.g.} \citeauthor*{nualart2006malliavin} \cite{nualart2006malliavin} for more details. 
Provided that $G(X)$, $b(t,X_{\cdot\wedge t},u)$, $L(t,X_{\cdot\wedge t},u)$ and $u^\star(t,X_{\cdot\wedge t},z)$ are all Malliavin differentiable, for any $(t,u,z)\in[0,T]\times U\times\R^d$, and are uniformly Lipschitz-continuous with respect to the variables $u$ and $z$, it is known that the solution to BSDE \eqref{eq:BSDE0} 
will be Malliavin differentiable (see for instance the ideas in \citeauthor*{mastrolia2014malliavin} \cite{mastrolia2014malliavin}), and that $Z^0_t = D_tY^0_t$, from which we will deduce by induction that for all $n$, $(Y^n,Z^n)$ is Malliavin differentiable and
$Z^n_t=D_t Y_t^n,\; \mathrm{d}t\otimes\P\text{\rm--a.e.}$, see the proof of \Cref{prop:pointwise.conv} for more details on this.

\medskip
It is then enough to differentiate \Cref{eq:def.Pn}, bringing first the expectation back to the measure $\P$, to write for any $(r,i)\in[0,T]\times\{1,\dots,d\}$, using in particular the chain rule, see \cite[Proposition 1.2.3]{nualart2006malliavin}, as well as \cite[Proposition 1.2.8]{nualart2006malliavin}
\begin{align*}
 D^i_r Y^n_t &= \varphi(n)D^i_r\log\bigg(\E^{\PP}\bigg[\mathrm{e}^{\int_\smalltext{t}^\smalltext{T}b(s,X_{\smalltext{\cdot}\smalltext{\wedge} \smalltext{s}},u^\smalltext{\star}(s, X_{\smalltext{\cdot}\smalltext{\wedge} \smalltext{s}}, Z^{\smalltext{n}\smalltext{-}\smalltext{1}}_\smalltext{s}))\cdot\mathrm{d}B_\smalltext{s}-\int_\smalltext{t}^\smalltext{T}\frac{\|b(s,X_{\smalltext{\cdot}\smalltext{\wedge} \smalltext{s}},u^\smalltext{\star}(s, X_{\smalltext{\cdot}\smalltext{\wedge} \smalltext{s}}, Z^{\smalltext{n}\smalltext{-}\smalltext{1}}_\smalltext{s}))\|^\smalltext{2}}{2}\mathrm{d}s+\frac{G(X_{\smalltext{\cdot}\smalltext{\wedge} \smalltext{T}})}{\varphi(n)}-\int_0^T\frac{L(s,X_{\smalltext{\cdot}\smalltext{\wedge} \smalltext{s}},u^\smalltext{\star}(s, X_{\smalltext{\cdot}\smalltext{\wedge}\smalltext{ s}}, Z^{\smalltext{n}\smalltext{-}\smalltext{1}}_\smalltext{s}))}{\varphi(n)}\mathrm{d}s}\bigg|\cF_t\bigg] \bigg)\\
 &=\varphi(n)\frac{\E^{\PP}\Big[D_r^i\mathrm{e}^{\int_\smalltext{t}^\smalltext{T}b(s,X_{\smalltext{\cdot}\smalltext{\wedge} \smalltext{s}},u^\smalltext{\star}(s, X_{\smalltext{\cdot}\smalltext{\wedge} \smalltext{s}}, Z^{\smalltext{n}\smalltext{-}\smalltext{1}}_\smalltext{s}))\cdot\mathrm{d}B_\smalltext{s}-\int_\smalltext{t}^\smalltext{T}\frac{\|b(s,X_{\smalltext{\cdot}\smalltext{\wedge} \smalltext{s}},u^\smalltext{\star}(s, X_{\smalltext{\cdot}\smalltext{\wedge} \smalltext{s}}, Z^{\smalltext{n}\smalltext{-}\smalltext{1}}_\smalltext{s}))\|^\smalltext{2}}{2}\mathrm{d}s+\frac{G(X_{\smalltext{\cdot}\smalltext{\wedge} \smalltext{T}})}{\varphi(n)}-\int_0^T\frac{L(s,X_{\smalltext{\cdot}\smalltext{\wedge} \smalltext{s}},u^\smalltext{\star}(s, X_{\smalltext{\cdot}\smalltext{\wedge}\smalltext{ s}}, Z^{\smalltext{n}\smalltext{-}\smalltext{1}}_\smalltext{s}))}{\varphi(n)}\mathrm{d}s}\Big|\cF_t\Big]}{\E^{\PP}\Big[\mathrm{e}^{\int_\smalltext{t}^\smalltext{T}b(s,X_{\smalltext{\cdot}\smalltext{\wedge} \smalltext{s}},u^\smalltext{\star}(s, X_{\smalltext{\cdot}\smalltext{\wedge} \smalltext{s}}, Z^{\smalltext{n}\smalltext{-}\smalltext{1}}_\smalltext{s}))\cdot\mathrm{d}B_\smalltext{s}-\int_\smalltext{t}^\smalltext{T}\frac{\|b(s,X_{\smalltext{\cdot}\smalltext{\wedge} \smalltext{s}},u^\smalltext{\star}(s, X_{\smalltext{\cdot}\smalltext{\wedge} \smalltext{s}}, Z^{\smalltext{n}\smalltext{-}\smalltext{1}}_\smalltext{s}))\|^\smalltext{2}}{2}\mathrm{d}s+\frac{G(X_{\smalltext{\cdot}\smalltext{\wedge} \smalltext{T}})}{\varphi(n)}-\int_0^T\frac{L(s,X_{\smalltext{\cdot}\smalltext{\wedge} \smalltext{s}},u^\smalltext{\star}(s, X_{\smalltext{\cdot}\smalltext{\wedge}\smalltext{ s}}, Z^{\smalltext{n}\smalltext{-}\smalltext{1}}_\smalltext{s}))}{\varphi(n)}\mathrm{d}s}\Big|\cF_t\Big] }.
\end{align*}
Then, denoting for simplicity
\[
A^n_t\coloneqq \mathrm{e}^{\int_\smalltext{t}^\smalltext{T}b(s,X_{\smalltext{\cdot}\smalltext{\wedge} \smalltext{s}},u^\smalltext{\star}(s, X_{\smalltext{\cdot}\smalltext{\wedge} \smalltext{s}}, Z^{\smalltext{n}\smalltext{-}\smalltext{1}}_\smalltext{s}))\cdot\mathrm{d}B_\smalltext{s}-\int_\smalltext{t}^\smalltext{T}\frac{\|b(s,X_{\smalltext{\cdot}\smalltext{\wedge} \smalltext{s}},u^\smalltext{\star}(s, X_{\smalltext{\cdot}\smalltext{\wedge} \smalltext{s}}, Z^{\smalltext{n}\smalltext{-}\smalltext{1}}_\smalltext{s}))\|^\smalltext{2}}{2}\mathrm{d}s+\frac{G(X_{\smalltext{\cdot}\smalltext{\wedge} \smalltext{T}})}{\varphi(n)}-\int_0^T\frac{L(s,X_{\smalltext{\cdot}\smalltext{\wedge} \smalltext{s}},u^\smalltext{\star}(s, X_{\smalltext{\cdot}\smalltext{\wedge}\smalltext{ s}}, Z^{\smalltext{n}\smalltext{-}\smalltext{1}}_\smalltext{s}))}{\varphi(n)}\mathrm{d}s},
\]
we have again using the chain rule (where we denote by $f_x$ the derivative of any function $f$ with respect of its $x$-variable )
\begin{align*}
\frac{D_r^iA^n_t}{A^n_t}&=\int_{t}^{T}\Big(D_r^ib\big(s,X_{{\cdot}{\wedge} {s}},u^{\star}(s, X_{{\cdot}{\wedge} {s}}, Z^{{n}{-}{1}}_{s})\big)+b_u\big(s,X_{{\cdot}{\wedge} {s}},u^{\star}(s, X_{{\cdot}{\wedge} {s}}, Z^{{n}{-}{1}}_{s})\big)\big((D_r^iu^{\star}+u^{\star}_z))(s, X_{{\cdot}{\wedge} {s}}, Z^{{n}{-}{1}}_{s}) D_r^iZ^{n-1}_s \Big)\cdot\mathrm{d}B_{s}\\
&\quad-\int_{t}^{T}b\big(s,X_{{\cdot}{\wedge} {s}},u^{\star}(s, X_{{\cdot}{\wedge} {s}}, Z^{{n}{-}{1}}_{s})\big)\cdot D_r^ib(s,X_{{\cdot}{\wedge} {s}},u^{\star}(s, X_{{\cdot}{\wedge} {s}}, Z^{{n}{-}{1}}_{s}))\mathrm{d}s\\
&\quad+\int_t^Tb\big(s,X_{{\cdot}{\wedge} {s}},u^{\star}(s, X_{{\cdot}{\wedge} {s}}, Z^{{n}{-}{1}}_{s})\big)\cdot b_u\big(s,X_{{\cdot}{\wedge} {s}},u^{\star}(s, X_{{\cdot}{\wedge} {s}}, Z^{{n}{-}{1}}_{s})\big)\big((D_r^iu^{\star}+u^{\star}_z))(s, X_{{\cdot}{\wedge} {s}}, Z^{{n}{-}{1}}_{s})D_r^iZ^{n-1}_s\mathrm{d}s+\frac{D_r^iG(X)}{\varphi(n)}\\
&\quad-\int_{t}^{T}\Big(D_r^iL\big(s,X_{{\cdot}{\wedge} {s}},u^{\star}(s, X_{{\cdot}{\wedge} {s}}, Z^{{n}{-}{1}}_{s})\big)+L_u\big(s,X_{{\cdot}{\wedge} {s}},u^{\star}(s, X_{{\cdot}{\wedge} {s}}, Z^{{n}{-}{1}}_{s})\big)\big((D_r^iu^{\star}+u^{\star}_z))(s, X_{{\cdot}{\wedge} {s}}, Z^{{n}{-}{1}}_{s}) D_r^iZ^{n-1}_s \Big)\mathrm{d}s.
\end{align*}
This allows to obtain a recursive scheme to compute $Z^n$ provided that we know $Z^{n-1}$ and $D_rZ^{n-1}$. Since the above formula for $n=0$ simplifies greatly (because $Z^{-1}=0$), the initialisation can effectively be done. 

\medskip

Next, to state the main result of this work, we will need to introduce an additional BSDE, which will naturally appear as the limit of the previously defined recursive scheme
\begin{equation}\label{bsde}
	Y_t=G(X)+\int_t^TH(s,X_{\cdot\wedge s},Z_s)\mathrm{d}s-\int_t^TZ_s\cdot\mathrm{d}B_s,\; t\in[0,T),\; \P\text{\rm--a.s.}
\end{equation}
We will show later that under our assumptions, the above equation is well-defined, and that its solution satisfies natural integrability requirements, see \Cref{prop:Bellman} and \Cref{lemma:unif}. We however require more.
\begin{assumption}
\label{ass.z.bounded}
	The unique solution to {\rm BSDE} \eqref{bsde} is such that $Z$ is bounded.
\end{assumption}

\begin{remark}
\label{rem:bounded.z}
{\rm \Cref{ass.z.bounded}} is admittedly an implicit assumption on the data of the control problem. In the Markovian case, it would be equivalent to asking that the value function of the control problem has a bounded first-order derivative, which is actually weaker than what is assumed in {\rm \citeauthor*{huang2024convergence} \cite{huang2024convergence}} or {\rm \citeauthor*{ma2024convergence} \cite{ma2024convergence}}. Notwithstanding, in the non-Markovian case there are sufficient conditions guaranteeing boundedness of $Z$, for instance by {\rm\citeauthor*{cheridito2014bsdes} \cite{cheridito2014bsdes}} this would hold if
\begin{enumerate}
		\item[$(i)$] $G(X)$ is a Malliavin differentiable random variable and there is a constant $C_G$ such that for any $i\in\{1,\dots,d\}$ and any $t\in[0,T]$, $|D^i_tG(X)|\le C_G$, $\PP\otimes \mathrm{d}t$--{\rm a.e.}$;$
		\item[$(ii)$] for any $i\in\{1,\dots,d\}$, there is a $\P$--square-integrable, Borel-measurable function $q_i:[0,T]\longrightarrow \R_+$ such that for all $z\in\R^d$ with $\|z\|^2 \le Q\coloneqq d\big(C_G +\int_0^Tq_i(t)\mathrm{d}t\big)^2$, the process $H(t, X_{\cdot\wedge t}, z)$ is Malliavin differentiable and for any $r\in[0,T]$, $D^i_rH(t,X_{\cdot\wedge t},z)\le q_i(t)$, $\PP\otimes \mathrm{d}t${\rm--a.e.;}
		\item[$(iii)$] for Lebesgue--almost every $r\in[0,T]$  there exists a non-negative process $K_r\in \H^4(\R,\F,\P)$ such that
		\begin{equation*}
			\int_0^T\|K_{r\cdot}\|^4_{\H^\smalltext{4}(\R,\F,\P)}\mathrm{d}r<+\infty,\; \big\|D_rH(t,X,z) - D_rH(t,X,z^\prime) \big\| \le K_{rt}\|z-z^\prime\|,\; \text{\rm $\forall(z,z^\prime)\in\R^d\times\R^d$ with $\max\{\|z\|,\|z^\prime\|\}\le \sqrt{Q}.$}
		\end{equation*}
	\end{enumerate}
\end{remark}
\begin{theorem}
\label{thm:Rate}
	Let {\rm\Cref{assump:standing}} and {\rm\Cref{ass.z.bounded}} be satisfied.
	Then for each function $u^\star$ 
	satisfying \eqref{eq:argmax.ass}, defining the scheme $(V^n,\widehat Y^n,Z^n,\alpha^n)_{n\in\N}$ through $u^\star$ and any $\varphi$, we have for some $C>0$
	\begin{equation}
	\label{eq:convergence.statement}
		V^n =  \widehat Y^n_0,\; -\frac{C}{2^{n}}\le V^n- V \le \frac{C}{\varphi(n)},\; 
		\forall n\in\N.
	\end{equation}
\end{theorem}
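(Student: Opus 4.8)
The equality $V^n=\widehat Y^n_0$ requires no work here: it is precisely the content of \Cref{prop:Bellman2}, already invoked in the construction of the scheme, and likewise $V=Y_0$ for the solution $(Y,Z)$ of \eqref{bsde} follows from \Cref{prop:Bellman}. Hence \Cref{thm:Rate} reduces to proving $-C2^{-n}\le Y^n_0-Y_0\le C\varphi(n)^{-1}$ for all $n$, and the plan is to handle the two inequalities by entirely different, purely probabilistic arguments, neither of which relies on a comparison principle for quadratic BSDEs.

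\emph{Upper bound.} After rescaling $\alpha$ by $\varphi^{1/2}(n)$ in \eqref{eq:problem.n}, we have $V^n=\sup_{\alpha\in\Ac}\E^{\P^{\alpha,n}}\big[G(X)-\int_0^T\big(\tfrac{\varphi(n)}{2}\|\alpha_s\|^2+L(s,X_{\cdot\wedge s},u^\star(s,X_{\cdot\wedge s},Z^{n-1}_s))\big)\diff s\big]$, where $\P^{\alpha,n}$ carries the Girsanov drift $b(s,X_{\cdot\wedge s},u^\star(s,X_{\cdot\wedge s},Z^{n-1}_s))+\alpha_s$. I would write \eqref{bsde} under $\P^{\alpha,n}$; the stochastic integral $\int_0^\cdot Z_s\cdot\diff B_s$ remains a true martingale after this change of measure because $Z$ is bounded (\Cref{ass.z.bounded}), and by the very definition of $H$ one has $H(s,X_{\cdot\wedge s},Z_s)\ge b(s,X_{\cdot\wedge s},u)\cdot Z_s-L(s,X_{\cdot\wedge s},u)$ for every admissible $u$, so that taking expectations yields $Y_0\ge \E^{\P^{\alpha,n}}\big[G(X)-\int_0^T\big(L(s,X_{\cdot\wedge s},u^\star(s,X_{\cdot\wedge s},Z^{n-1}_s))+Z_s\cdot\alpha_s\big)\diff s\big]$. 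Bounding $Z_s\cdot\alpha_s\le \tfrac{\varphi(n)}{2}\|\alpha_s\|^2+\tfrac{1}{2\varphi(n)}\|Z_s\|^2$ by Young's inequality and taking the supremum over $\alpha\in\Ac$ gives $V^n\le Y_0+\tfrac{T\|Z\|_\infty^2}{2\varphi(n)}$, which is the claimed upper bound; in particular the only use of \Cref{ass.z.bounded} here is the boundedness of $Z$.

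\emph{Lower bound --- a Bregman identity.} Rewriting \eqref{eq:BSDE.n} with terminal value $G(X)$, and using that $h$ is affine in $z$ while $\beta^{n-1}_s:=b(s,X_{\cdot\wedge s},u^\star(s,X_{\cdot\wedge s},Z^{n-1}_s))$ is, by \eqref{eq:argmax.ass}, a subgradient of the convex map $z\longmapsto H(s,X_{\cdot\wedge s},z)$ at $Z^{n-1}_s$, one obtains the exact identity $h(s,X_{\cdot\wedge s},Z^n_s,u^\star(s,X_{\cdot\wedge s},Z^{n-1}_s))-H(s,X_{\cdot\wedge s},Z_s)=\beta^{n-1}_s\cdot(Z^n_s-Z_s)-D^{n-1}_s$, where the Bregman divergence $D^{n-1}_s:=H(s,X_{\cdot\wedge s},Z_s)-H(s,X_{\cdot\wedge s},Z^{n-1}_s)-\beta^{n-1}_s\cdot(Z_s-Z^{n-1}_s)$ is non-negative. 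Subtracting the equations for $Y^n$ and $Y$, absorbing $\beta^{n-1}_s\cdot(Z^n_s-Z_s)$ into the change of measure to the $\P^n$ of \eqref{eq:def.Pn} (legitimate since $b$ is bounded), and taking $\E^{\P^n}$ --- the stochastic integrals being true $\P^n$-martingales by the integrability furnished by \Cref{lemma:unif} --- the quadratic penalisation and the martingale part leave exactly
\[
Y_0-Y^n_0=\E^{\P^n}\Big[\int_0^TD^{n-1}_s\,\diff s\Big]-\frac{1}{2\varphi(n)}\E^{\P^n}\Big[\int_0^T\|Z^n_s\|^2\,\diff s\Big]\le \E^{\P^n}\Big[\int_0^TD^{n-1}_s\,\diff s\Big].
\]
Since $z\longmapsto H(s,X_{\cdot\wedge s},z)$ is $\|b\|_\infty$-Lipschitz and $\|\beta^{n-1}_s\|\le\|b\|_\infty$, we get $0\le D^{n-1}_s\le 2\|b\|_\infty\|Z_s-Z^{n-1}_s\|$, so the lower bound reduces to showing $\E^{\P^n}\big[\int_0^T\|Z_s-Z^{n-1}_s\|\,\diff s\big]\le C'2^{-n}$.

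\emph{Lower bound --- geometric convergence of $Z^n$.} This is the heart of the matter and the step I expect to be the main obstacle. The plan is to prove by induction on $n$ that $\E\big[\int_0^T\|Z^n_s-Z_s\|^2\,\diff s\big]$ (under whichever of the equivalent measures at play one works with, their Girsanov densities having uniformly bounded moments because $b$ is bounded, so that passing between them costs only fixed constants) is bounded by its step-$(n-1)$ value times a contraction factor, up to an $O(\varphi(n)^{-2})$ term which does not affect the rate; the base case at $n=0$ is merely a boundedness statement, after which the geometric gain accumulates. The ingredients are the BSDE satisfied by $Y^n-Y$, the Lipschitz-continuity of $u^\star$ in its last argument (\Cref{assump:standing}.$(v)$), the $\|b\|_\infty$-Lipschitz-continuity of $H$ in $z$, the boundedness of $Z$, and the uniform-in-$n$ a priori estimates on $(Y^n,Z^n)$ afforded by the explicit representation \eqref{eq:def.Pn} together with \Cref{lem:estimX}. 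The delicate part is to run the energy estimate on $\|Y^n-Y\|^2$ so that the constant multiplying the step-$(n-1)$ term is genuinely strictly below one --- which is what pins down the geometric rate $1/2$ in \eqref{eq:convergence.statement} --- and this forces one to exploit the Bregman structure above rather than crude Lipschitz-in-$z$ bounds. Feeding the resulting geometric estimate into $\E^{\P^n}\big[\int_0^TD^{n-1}_s\,\diff s\big]\le 2\|b\|_\infty\sqrt{T}\,\big(\E^{\P^n}\big[\int_0^T\|Z_s-Z^{n-1}_s\|^2\,\diff s\big]\big)^{1/2}$ and combining with the upper bound then yields \eqref{eq:convergence.statement}.
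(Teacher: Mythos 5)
Your upper bound is correct and in fact a clean alternative to the paper's: instead of comparing the two BSDE representations and linearising $H$, you run a verification argument for \eqref{eq:problem.n} directly, writing \eqref{bsde} under $\P^{\alpha,n}$ and using $H\ge h(\cdot,u^\star(\cdot,Z^{n-1}))$ together with Young's inequality and the boundedness of $Z$; this gives $V^n\le Y_0+T\|Z\|_\infty^2/(2\varphi(n))$ with no need for the uniform estimates of \Cref{lemma:unif}. Your Bregman identity for the lower bound is also exact and correct (convexity of $H$ in $z$ and the fact that $b(s,X_{\cdot\wedge s},u^\star(s,X_{\cdot\wedge s},Z^{n-1}_s))$ is a subgradient at $Z^{n-1}_s$), and it reduces the claim to showing that $\E\big[\int_0^T\|Z^{n-1}_s-Z_s\|^2\,\diff s\big]$ decays geometrically.

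That reduction, however, is exactly where your argument stops: the geometric decay of $\E\big[\int_0^T\|Z^{n}_s-Z_s\|^2\,\diff s\big]$ is the core of the theorem (it is the paper's \Cref{prop:lowerbound}), and you only announce a plan for it, explicitly flagging the contraction constant as the expected obstacle. This is a genuine gap, and your diagnosis of the difficulty is also somewhat off. No Bregman structure is needed to get a contraction factor strictly (indeed arbitrarily) below one: apply It\^o's formula to $\mathrm{e}^{\beta s}(Y^n_s-Y_s)^2$, write the driver difference using the Lipschitz-continuity of $u^\star$ in $z$ and the boundedness of $Z$ (this is where \Cref{ass.z.bounded} enters), split the cross term with $Z^{n-1}-Z$ by Young's inequality with a small parameter $\eta$, and absorb the resulting $(Y^n-Y)^2$ term into the $\beta\int\mathrm{e}^{\beta s}(Y^n_s-Y_s)^2\,\diff s$ term by taking $\beta\gtrsim\eta^{-2}$; after a Girsanov change of measure with the bounded drift $b(\cdot,u^\star(\cdot,Z^{n-1}))$ this yields $u_n\le\eta^2u_{n-1}+C/\varphi(n)$, with $\eta$ as small as one likes. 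The remaining — and genuinely delicate — point, which your sketch glosses over by declaring the penalisation term ``$O(\varphi(n)^{-2})$ and harmless'', is the accumulation of these $1/\varphi(\cdot)$ terms along the recursion: $\sum_{k=0}^{n-1}\eta^{2k}/\varphi(n-k)$ does not decay like $2^{-n}$ for an arbitrary non-decreasing $\varphi$, and the paper controls it by splitting the sum and imposing (in \Cref{prop:lowerbound}) that $\varphi$ grows super-exponentially. Without carrying out this induction and the handling of the accumulated penalisation, the claimed rate $-C/2^n\le V^n-V$ is not established.
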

The proof of this result as well as those of all the results stated in the section are presented in \Cref{sec:proofs}.
Some comments about this result are in order. \Cref{assump:standing} consists essentially of growth conditions allowing to guarantee that the problem \eqref{eq:problem.n} is well-posed.
%
Provided that \Cref{ass.z.bounded} is satisfied, no regularity conditions is needed to derive \Cref{thm:Rate}.
As pointed out in \Cref{rem:bounded.z}, some Malliavin differentiability conditions would allow to verify \Cref{ass.z.bounded}, and, in the Markovian case, these conditions would translate in Lipschitz-continuity properties for $G$ and $H$, but not without necessarily needing smoothness.

\medskip

\Cref{thm:Rate} allows to estimate the general control problem \eqref{eq:OG.problem} by the sequence of iterated linear--quadratic control problems \eqref{eq:problem.n}.
These linear--quadratic control problems are essentially explicitly solved, and give a convergence rate of $1/2^{n}$ to the value of the problem of interest.
The approximation of an optimal control using the same scheme will also be given below first with respect to a norm on the product space $[0,T]\times\Omega$, see \Cref{prop:Norm.conv}.
Under stronger condition, this result will be refined to a pointwise rate for each $t\in [0,T]$, see \Cref{prop:pointwise.conv}.
In \Cref{cor:HJB} and Subsection \ref{sec.pointwise.conv.optim.cont} we discuss approximation of optimal control and in \Cref{thm:Rate.vol} we extend the above result to the controlled volatility case.
Before doing that, let us explain how our results and method compares to the existing literature.

\subsection{Comparison to the existing literature}
\label{sec:comparison}
As mentioned in the introduction, the papers developing policy iteration algorithms for stochastic control problems have approached the issues from a rather different perspective.
See for instance {\rm \citeauthor*{huang2024convergence} \cite{huang2024convergence}}, \citeauthor*{ma2024convergence} \cite{ma2024convergence} and references therein.
The method favoured by these papers begins with idea popularised in the reinforcement learning literature.
One rather optimises the desired reward function over the set of \emph{relaxed controls}, and then solves the regularised--penalised problem by Shannon's entropy in order to capture the trade-off between exploitation and exploration.
From the practical point of view, the entropy-regularised problem admits a unique optimal control that takes the form of a Gibb's measure.
This is the basis of the iteration algorithm via a sequence of linear PDEs that can be seen as a Picard iteration of sorts for the HJB equation of the regularised control problem.
As a byproduct of this approach, the optimal control attained in the limit is of \emph{relaxed} form.
Let us now briefly compare our approach and results to the method just described. To ease the comparison, we focus on the Markovian case considered in the existing literature.
That is, we assume that the functions involved
\begin{align*}
	L&:[0,T]\times \R^m \times U \longrightarrow \R,\; b: [0,T]\times \R^m\times U\longrightarrow \R^d,\; \sigma: [0,T]\times \R^m\longrightarrow \R^{m\times d},\;
	G:\R^m\longrightarrow \R,
\end{align*}
depend on the current value of the state process and that $\xi = x_0\in \R^d$.

\paragraph*{Reformulation of the scheme as entropic penalisation.}
By standard results from BSDE theory, see \emph{e.g.} \cite{ma1999forward}, the solution $Z^0$ of the BSDE \eqref{eq:BSDE0} is such that there is a Borel-measurable function $v^0:[0,T]\times \R^m\longrightarrow \R^d$ such that $Z^0_t = v^0(t,X_{t})$.
Therefore, it follows by induction that for all $n\in \N$, there is $v^n:[0,T]\times \R^m\longrightarrow\R^d$ such that $Z^n_t = v^n(t,X_{t})$.
On the other hand, recall the relative-entropy (Kullback--Leibner divergence) $\cH(\Q|\PP)$ of $\Q$ with respect to $\PP$ given by
\begin{equation*}
	\cH(\Q|\PP) \coloneqq \begin{cases}
		\displaystyle \E^{\Q}\bigg[\log\bigg(\frac{\d\Q}{\d\PP}\bigg)\bigg],\; \text{if}\; \Q\ll \PP,\\
		+\infty,\; \text{otherwise}.
	\end{cases}
\end{equation*}
These allow to construct the following iteration scheme: we let $v^{-1} \coloneqq 0$ and assume that for each $n$, the following SDE admits a unique strong solution
\begin{equation}
\label{eq:Non.Markov.SDE}
	X^n_t =\xi+ \int_0^t\sigma(s,X^n_{s})b\big(s,X^n_{ s},u^\star(s, X^n_{ s}, v^{n-1}(s,X^n_{ s}))\big)\d s + \int_0^t\sigma(s,X^n_{s})\d B_s,\; t\in[0,T],\; \P\text{\rm--a.s.}
\end{equation}
Then, for each $n\in \N$, we put
\begin{equation}
\label{eq:problem.n.tilde}
	\begin{cases}
		\displaystyle\widetilde V^n \coloneqq  \sup_{\alpha \in \cA}\bigg\{ \E^{\PP^{\smalltext\nu}}\bigg[G(X^n_{ T}) - \int_0^T L\big(s,X^n_{s}, u^\star(s, X^n_{s}, v^{n-1}(s,X^n_{s}))\big)\d s \bigg] - \varphi^{1/2}(n)\cH(\PP^\alpha|\PP)\bigg\}, \\[0.8em]
		\displaystyle\frac{\mathrm{d}\PP^{\nu}}{\mathrm{d}\PP}\coloneqq \cE\bigg(\int_0^\cdot \alpha_s \cdot \d B_s\bigg)_T.
	\end{cases}
\end{equation}

The following holds.
\begin{proposition}
\label{prop:entropic.penal}
	For every $n\in\N$, we have $\widetilde V^n = V^n$.
\end{proposition}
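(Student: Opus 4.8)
The plan is to show that both $V^n$ and $\widetilde V^n$ equal the same quantity, namely $Y^n_0$, where $Y^n$ is the solution of the BSDE \eqref{eq:BSDE.n}, by recognising that the two displayed control problems \eqref{eq:problem.n} and \eqref{eq:problem.n.tilde} are simply two equivalent ways of writing the same optimisation — one where the drift change is absorbed into the reference measure and the penalty is the $\tfrac12\|\alpha\|^2$ integral under the shifted measure, the other where the penalty is the relative entropy $\varphi^{1/2}(n)\mathcal H(\PP^\alpha|\PP)$ under the state dynamics \eqref{eq:Non.Markov.SDE}. First I would invoke \Cref{prop:Bellman2} to identify $V^n = Y^n_0$ as already recorded in the text just below \eqref{eq:BSDE.n}, so that it suffices to prove $\widetilde V^n = Y^n_0$ as well.

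Next I would compute $\widetilde V^n$ directly. On the canonical space, under $\PP^\alpha$ with $\d\PP^\alpha/\d\PP = \mathcal E(\int_0^\cdot\alpha_s\cdot\d B_s)_T$, Girsanov gives $B^\alpha_\cdot := B_\cdot - \int_0^\cdot\alpha_s\,\d s$ a $\PP^\alpha$-Brownian motion, and the strong solution $X^n$ of \eqref{eq:Non.Markov.SDE} (with drift $\sigma b(\cdot,u^\star(\cdot,v^{n-1}))$) has, under $\PP^\alpha$, exactly the law of the controlled state process appearing in \eqref{eq:problem.n}; moreover the relative entropy is the standard expression $\mathcal H(\PP^\alpha|\PP) = \tfrac12\E^{\PP^\alpha}[\int_0^T\|\alpha_s\|^2\,\d s]$ whenever $\alpha\in\mathcal A$ (and $=+\infty$ otherwise, so only $\alpha\in\mathcal A$ matter). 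Re-parametrising $\alpha \rightsquigarrow \alpha/\varphi^{1/2}(n)$ — a bijection of $\mathcal A$ onto itself — turns $\varphi^{1/2}(n)\,\tfrac12\E^{\PP^\alpha}[\int_0^T\|\alpha_s\|^2]$ into $\tfrac{1}{2\varphi(n)}\E[\int_0^T\|\alpha_s\|^2]$ and the measure $\PP^\alpha$ into $\PP^{\alpha,n}$ of \eqref{eq:problem.n}; hence $\widetilde V^n$ coincides term-by-term with $V^n$ as written in \eqref{eq:problem.n}. Since $V^n = Y^n_0$ by \Cref{prop:Bellman2}, this yields $\widetilde V^n = V^n$.

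The only genuinely delicate point — the main obstacle — is the bookkeeping around admissibility and integrability: one must check that the class of $\alpha$ for which $\mathcal H(\PP^\alpha|\PP)<\infty$ is (up to what is irrelevant for the supremum) precisely $\mathcal A$, that the entropy formula $\mathcal H(\PP^\alpha|\PP)=\tfrac12\E^{\PP^\alpha}[\int_0^T\|\alpha\|^2\d s]$ holds on that class, and that the reward $G(X^n_T)-\int_0^T L(s,X^n_s,\cdot)\,\d s$ is $\PP^\alpha$-integrable — all of which follow from the boundedness of $b,\sigma$ in \Cref{assump:standing}$(i)$, the sub-quadratic growth in $(ii)$, and \Cref{lem:estimX} together with a change-of-measure/Young-inequality argument exactly as in the proof of \Cref{prop:Bellman2}. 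Once these standing-space issues are dispatched, the identity is a one-line substitution; I would therefore organise the write-up as: (1) recall $V^n=Y^n_0$; (2) Girsanov plus the entropy formula rewrites \eqref{eq:problem.n.tilde}; (3) the change of variable $\alpha\mapsto\alpha/\varphi^{1/2}(n)$ identifies it with \eqref{eq:problem.n}; conclude.
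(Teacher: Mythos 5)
Your overall skeleton (entropy formula for $\cH(\P^\alpha|\P)$, the identity $V^n=Y^n_0$ from \Cref{prop:Bellman2}, a Girsanov argument) points in the right direction, but the central step is asserted rather than proved, and as stated it does not hold. You claim that for a fixed $\alpha\in\cA$ the strong solution $X^n$ of \eqref{eq:Non.Markov.SDE} has, under $\P^\alpha$, ``exactly the law of the controlled state process appearing in \eqref{eq:problem.n}'', so that after rescaling $\alpha\rightsquigarrow\alpha/\varphi^{1/2}(n)$ the two problems match term by term. To equate the expected rewards control-by-control you need equality of the \emph{joint} law of (state, control), not just of the state: the reward integrand involves both. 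For open-loop $\alpha$ this fails: $\alpha$ is a fixed functional of the Brownian path $B$, while $X$ (driftless) and $X^n$ (drifted) are \emph{different} functionals of $B$; expressed in terms of the respective driving Brownian motions $B^{\alpha,n}$ and $B^{\alpha}$, the same functional produces different control processes, and already the marginal law of the control differs under $\P^{\alpha,n}$ and under $\P^{\alpha/\varphi^{1/2}(n)}$ (a one-dimensional example with constant drift $b$ and $\alpha_t$ depending on $B_{t/2}$ shows this). A genuine term-by-term pairing exists only after recomposing the control functional with the drift, i.e.\ it amounts to the nontrivial equivalence of the two weak formulations; your plan neither constructs this bijection nor invokes anything replacing it. A further point your argument glosses over is that the feedback coefficient is evaluated along different processes in the two problems: $u^\star(s,X_{\cdot\wedge s},Z^{n-1}_s)$ in \eqref{eq:problem.n} versus $u^\star(s,X^n_{\cdot\wedge s},v^{n-1}(s,X^n_s))$ in \eqref{eq:problem.n.tilde}; identifying these requires the Markovian representation $Z^{n-1}_t=v^{n-1}(t,X_t)$ together with an equality-in-law statement for the whole forward--backward system, which does not follow from rescaling the control.

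The paper's proof avoids the control-by-control matching altogether: it applies \Cref{prop:Bellman2} to the penalised problem to get $\widetilde V^n=\widetilde Y^n_0$, where $(\widetilde Y^n,\widetilde Z^n)$ solves a BSDE with generator $\tfrac1{2\varphi(n)}\|z\|^2$ driven by the forward process $X^n$, yielding an explicit exponential formula for $\widetilde V^n$ under $\P$; it then rewrites BSDE \eqref{eq:BSDE.n} under the measure $\P^n$ of \eqref{eq:def.Pn} via Girsanov, so that $(X,\widehat Y^n,Z^n)$ under $\P^n$ and $(X^n,\widetilde Y^n,\widetilde Z^n)$ under $\P$ solve the \emph{same} forward--backward system; uniqueness in law of the SDE \eqref{eq:Non.Markov.SDE} and uniqueness of the BSDE, together with $Z^{n-1}_t=v^{n-1}(t,X_t)$, give $\P^n\circ(X,\widehat Y^n,Z^n)^{-1}=\P\circ(X^n,\widetilde Y^n,\widetilde Z^n)^{-1}$, and equality of the two exponential formulas then yields $\widetilde V^n=V^n$. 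To repair your proof you should either carry out this BSDE/uniqueness-in-law argument, or prove rigorously that the sets of achievable joint laws of (state, control) in the two formulations coincide; the rescaling of $\alpha$ alone does not do this.
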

The assumption on existence of strong solutions of the SDE \eqref{eq:Non.Markov.SDE} is to ease the presentation, as the goal here is simply to showcase an interpretation of our scheme as an entropy-regularised problem.
In a more general presentation, we would work with weak solutions of this SDE, for instance as done in \Cref{sec:controlled.diffusion}.

\medskip

Let us now highlight the following three important features of our approach when compared to the usual method studied in \citeauthor*{ma2024convergence} \cite{ma2024convergence}
\begin{enumerate}
\item[$(i)$] \Cref{thm:Rate} and \Cref{prop:entropic.penal} show that $V$ is actually approximated by a sequence of entropy-penalised problems similar to the existing literature.
However, we do not consider relaxed controls and we penalise the control problems using the relative entropy rather than the Shannon entropy.
Therefore, the optimal control that we approximate is a \emph{strict control process} (it is a weak optimal control in the general non-Markovian case of \Cref{prop:Norm.conv}, and a strong optimal control in the Markovian case of \Cref{cor:HJB}).

\item[$(ii)$] The existing literature uses the PIA to approximate the penalised problem.
Nothing is said about the (optimal) choice of the exogenous penalisation parameter, or how fast the penalised problem converges to the original problem under study.
This is usually a nontrivial problem of intrinsic interest, see for instance {\rm \citeauthor*{reisinger2021regularity} \cite[Section 6]{reisinger2021regularity}} for an in-depth discussion.
Our approach naturally blends penalisation and iteration at the same time, so that we do not need to introduce an additional tuning parameter.

\item[$(iii)$] The PIA used in \citeauthor*{ma2024convergence} \cite{ma2024convergence} is naturally monotone, so that the value function is approximated from below by a sequence of solutions of linear PDEs. 
In contrast, our approximating values do not seem to be monotone. 
However, as \Cref{thm:Rate} shows, it holds $V^n \le V +C/\varphi(n)$ where $\varphi(n)$ can be made to converge arbitrarily fast to infinity. 
This means that the non-monotone part of the approximation can be made arbitrarily small, arbitrarily fast. 
\end{enumerate}

\paragraph*{Reformulation of the scheme as iterated linear HJB equations.}
Further observe that, although the proof we give of \Cref{thm:Rate} is purely probabilistic, a natural PDE formulation can be given in the spirit of \citeauthor*{ma2024convergence} \cite{ma2024convergence}.

\medskip
By (the proof of) \Cref{thm:Rate}, for any $n\in\N$, the value $V^n$ of the problem \eqref{eq:problem.n} satisfies $V^n =  Y^n_0$ and its optimal control $\alpha^n_t = Z^n_t/\varphi^{1/2}(n)$ where $(Y^n,Z^n)$ solves the BSDE \eqref{eq:BSDE.n}.
But in the current Markovian case, it is well-known, that if $b$, $G$, $L$ and $\sigma$ are continuously differentiable, then by {\rm \citeauthor*{ma2002representation} \cite[Theorem 3.1]{ma2002representation}} it holds 
\begin{equation*}
	\widehat Y^n_t = v^n(t,X_t), \; \text{and}\; Z^n_t = (\sigma^\top\nabla v^n)(t,X_t),
\end{equation*}
where $v^n:[0,T]\times \R^m\longrightarrow \R$ is a (differentiable) viscosity solution of the PDE
\begin{equation*}
	\begin{cases}
	\displaystyle v^n_t(t,x)+ \mathrm{Tr}\bigg[\frac{(\sigma\sigma^\top\Delta v^n)(t,x)}2\bigg] + \frac{\|\sigma^\top\nabla v^n(t,x)\|^2}{2\varphi(n)}+ h\big(t, x, (\sigma^\top\nabla v^n)(t,x),u^\star(t,x,(\sigma^\top\nabla v^{n-1})(t,x))\big) =0,\; (t,x)\in[0,T)\times\R^m,\\[0.8em]
	v^n(T,x) = G(x),\; x\in\R^m.
	\end{cases}
\end{equation*}
Notice that the above PDE is not linear, but it can be linearised using the standard Cole--Hopf exponential transform since $h$ is linear in its third argument, and has a purely quadratic non-linearity in the gradient. More precisely, defining $u^n\coloneqq \exp(v^n/\varphi(n))$, $n\in\N$, we have
\begin{equation*}
	\begin{cases}
	\displaystyle u^n_t(t,x)+ \frac12\mathrm{Tr}\big[(\sigma\sigma^\top\Delta u^n)(t,x)\big] +\sigma(t,x) b\big(t,x,u^\star(t,x,(\sigma^\top\nabla v^{n-1})(t,x))\big)\cdot \nabla u^n(t,x)\\
	\displaystyle\quad- \frac{L\big(t, x, u^\star(t,x,(\sigma^\top\nabla v^{n-1})(t,x))\big)}{\varphi(n)}u^n(t,x) =0,\; (t,x)\in[0,T)\times\R^m,\\[0.8em]
	u^n(T,x) = \exp(G(x)/\varphi(n)),\; x\in\R^m.
	\end{cases}
	\end{equation*}
	That is
	\begin{equation}
\label{eq:def.vn}
	 v^n(t,x) = \varphi(n)\log\bigg(\E^{\PP^\smalltext{n}}\bigg[\exp\bigg(\frac{F^n(X_{T})}{\varphi(n)}\bigg)\bigg] \bigg),\; t\in[0,T],\; \text{with}\; \frac{\mathrm{d}\PP^{n}}{\mathrm{d}\PP}\coloneqq \cE\bigg(\int_t^\cdot  b\big(s,X_{s},u^\star(s, X_{s}, (\sigma^\top\nabla v^{n-1})(s,X_s))\big)\cdot \mathrm{d} B_s\bigg)_T,
\end{equation}
with
	\begin{equation*}
	F^n(X_{T}) \coloneqq G(X_{ T}) - \int_t^TL\big(s,X_{ s},u^\star(s, X_{s}, (\sigma^\top\nabla v^{n-1})(s,X_s))\big)\mathrm{d}s,\;\text{and } X_t =x.
\end{equation*}

Furthermore, we also know from \Cref{prop:Bellman} and {\rm \citeauthor*{ma1999forward} \cite{ma1999forward}} that the value $V$ of the problem \eqref{eq:OG.problem} satisfies $V = v(0,x_0),$ where $v$ is a viscosity solution of the HJB equation associated to the control problem \eqref{eq:OG.problem}
\begin{equation}
\label{eq:HJB}
	v_t(t,x) + \frac12\mathrm{Tr}\big[(\sigma \sigma^\top\Delta v)(t,x)\big] + H\big(t,x,(\sigma^\top\nabla v)(t,x)\big) = 0,\; v(T,x) = G(x),\; (t,x)\in[0,T)\times\R^m.
\end{equation}
The following is a direct corollary of our main result.
\begin{corollary}
\label{cor:HJB}
	Let {\rm\Cref{assump:standing}} be satisfied, and further assume that the functions $b$, $G$, $L$ and $\sigma$ are bounded twice continuously differentiable in $x$, continuous in $t$ and with bounded derivatives, and $\sigma$ uniformly non-generate
	Then we have for some $C>0$
	\begin{equation*}
		-\frac{C}{2^n}\le v^n(t,x) - v(t,x)\le \frac{C}{\varphi(n)},\; \forall (n,t,x)\in \N\times [0,T]\times \R^m.
	\end{equation*}
	Moreover, the control
	$\alpha^\star_t\coloneqq u^\star(t, X_{t}^{\alpha^\smalltext{\star}}, (\sigma\nabla v)(t, X_{t}^{\alpha^\smalltext{\star}}))$ is a solution of the stochastic optimal control problem \ref{eq:OG.problem}, where $ X^{\alpha^\smalltext{\star}}$ is the unique solution of the {\rm SDE}
	\begin{equation}
	\label{eq:Markov.SDE.strong}
		X_t^{\alpha^\smalltext{\star}} = x+ \int_0^t\sigma\big(s,X^{\alpha^\smalltext{\star}}_{ s},\phi(t, X_{s}^{\alpha^\smalltext{\star}}))\big)b\big(s,X^{\alpha^\smalltext{\star}}_{s}, \phi(s,X_{s}^{\alpha^\smalltext{\star}})\big)\mathrm{d}s + \int_0^t\sigma\big(s,X^{\alpha^\smalltext{\star}}_{ s},\phi(s,X_{s}^{\alpha^\smalltext{\star}})\big)\mathrm{d}B_s, t\in[0,T],\; \P\text{\rm--a.s.}
	\end{equation}
	with $\phi(t,x) \coloneqq u^\star(t, x, (\sigma\nabla v)(t,x))$ and there is $C>0$ such that
	\begin{equation*}
		\E^\P\bigg[\sup_{t\in [0,T]}\|X^n_t - X_t^{\alpha^\smalltext{\star}} \| + \int_0^T\big\|
		\alpha^{\star,n}_t- \alpha^\star_t\big\|\mathrm{d}t\bigg]\le \frac{C}{2^{n}},\; \forall n\in\N
	\end{equation*}
	with $\alpha^{\star,n}_t \coloneqq u^\star\big(t, X^n_t, (\sigma^\top\nabla v^n)(t, X^n_t)\big),$ where for each $n\in \N$ the process $X^n$ is the unique strong solution of {\rm SDE} \eqref{eq:Markov.SDE.strong} with $v$ therein replaced by $v^n$.
\end{corollary}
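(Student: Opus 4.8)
The plan is to establish the three assertions in turn, the first being essentially a restatement of \Cref{thm:Rate} and the other two following from a classical verification argument and an $\H^2$-stability estimate, the latter being where the real work lies. \emph{Step 1 (the bound on $v^n-v$).} Under the present hypotheses the driftless SDE \eqref{eq:driftless} is uniformly elliptic with smooth bounded coefficients, so the decoupled Markovian FBSDEs underlying \eqref{eq:BSDE.n} and \eqref{bsde} admit classical solutions and, by the nonlinear Feynman--Kac representation (see \cite{ma1999forward,ma2002representation}), $\widehat Y^n_t=v^n(t,X_t)$ and $Y_t=v(t,X_t)$, with $v^n,v\in C^{1,2}$ having bounded, Lipschitz-in-$x$ gradients; in particular $Z_t=(\sigma^\top\nabla v)(t,X_t)$ is bounded, so \Cref{ass.z.bounded} holds automatically, and the constant produced in the proof of \Cref{thm:Rate} depends only on the uniform and Lipschitz bounds of the data, hence not on the starting point. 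Applying \Cref{thm:Rate} to the control problem initialised at an arbitrary $(t,x)$ then yields the claimed two-sided estimate, uniformly in $(t,x)$.

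\emph{Step 2 (optimality of $\alpha^\star$).} Since $v\in C^{1,2}$ has a Lipschitz-in-$x$ gradient and $u^\star$ is Lipschitz in its last argument, the feedback $\phi(t,x)=u^\star(t,x,(\sigma\nabla v)(t,x))$ is Lipschitz in $x$ under the present hypotheses; together with \Cref{assump:standing}$(iii)$ this makes the coefficients of \eqref{eq:Markov.SDE.strong} Lipschitz, so $X^{\alpha^\star}$ is well defined with finite moments. I would then apply It\^o's formula to $t\mapsto v(t,X^{\alpha^\star}_t)$, substitute the HJB equation \eqref{eq:HJB} and use that $\phi$ attains the supremum defining $H$, to get $v(T,X^{\alpha^\star}_T)-v(0,x_0)=\int_0^T L(s,X^{\alpha^\star}_s,\alpha^\star_s)\,\mathrm{d}s+\int_0^T(\sigma^\top\nabla v)(s,X^{\alpha^\star}_s)\cdot\mathrm{d}B_s$; since $\nabla v$ and $\sigma$ are bounded the stochastic integral is a true martingale, so taking expectations yields $\E^\P[G(X^{\alpha^\star}_T)-\int_0^T L(s,X^{\alpha^\star}_s,\alpha^\star_s)\,\mathrm{d}s]=v(0,x_0)=V$, while running the same computation with an arbitrary $\nu\in\Uc$ (the supremum now only being an inequality) gives the reverse bound, whence $\alpha^\star$ is optimal.

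\emph{Step 3 (the rate for $X^n$ and $\alpha^{\star,n}$).} This can be deduced either by specialising \Cref{prop:Norm.conv} to feedback controls, or directly as follows. The proof of \Cref{thm:Rate}, read in the Markovian case, also controls $\|Z^n-Z\|_{\H^2(\R^d,\F,\P)}$ at order $1/2^n$; since $\sigma\sigma^\top\ge\varepsilon I$ and $Z^n_t-Z_t=\sigma^\top(t,X_t)(\nabla v^n-\nabla v)(t,X_t)$, this reads $\E^\P[\int_0^T\|(\nabla v^n-\nabla v)(s,X_s)\|^2\,\mathrm{d}s]\le C/4^n$. A Girsanov change of measure transporting the driftless $X$ onto the controlled diffusion $X^n$ — whose Radon--Nikodym density has $\P$-moments bounded uniformly in $n$ because the drift $\sigma b$ is bounded — combined with the uniform bound $\sup_n\|\nabla v^n\|_\infty<\infty$, upgrades this to $\E^\P[\int_0^T\|(\nabla v^n-\nabla v)(s,X^n_s)\|\,\mathrm{d}s]\le C/2^n$. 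Finally I would subtract the SDEs solved by $X^n$ and $X^{\alpha^\star}$, bound the drift mismatch by $C\|X^n_s-X^{\alpha^\star}_s\|+C\|(\nabla v^n-\nabla v)(s,X^n_s)\|$ (using \Cref{assump:standing}$(iii)$ and Lipschitz continuity of $\phi$ in $x$ and of $u^\star$ in $z$), and apply the Burkholder--Davis--Gundy and Gronwall inequalities to obtain $\E^\P[\sup_{t\le T}\|X^n_t-X^{\alpha^\star}_t\|]\le C/2^n$; decomposing $\alpha^{\star,n}_t-\alpha^\star_t$ into a term controlled by $\|(\nabla v^n-\nabla v)(t,X^n_t)\|$ and one controlled by $\|X^n_t-X^{\alpha^\star}_t\|$ then gives the stated bound on $\E^\P[\int_0^T\|\alpha^{\star,n}_t-\alpha^\star_t\|\,\mathrm{d}t]$.

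\emph{Expected main obstacle.} The delicate point is Step 3: sup-norm convergence $v^n\to v$ does not by itself control the gradients, so one must either invoke uniform interior parabolic (Schauder) estimates for the linearised equations appearing in the Cole--Hopf reformulation, or, as above, extract from the probabilistic proof of \Cref{thm:Rate} a quantitative $\H^2$-bound on $Z^n-Z$ and transport it along the controlled trajectories. The two auxiliary regularity facts one must secure along the way are the uniform-in-$n$ gradient bound $\sup_n\|\nabla v^n\|_\infty<\infty$ (equivalently $\sup_n\|Z^n\|_\infty<\infty$, for which the smoothness and non-degeneracy hypotheses, and in the PDE route the uniform Schauder estimates, are precisely what is needed) and the Lipschitz-in-$x$ regularity of the optimal feedback $\phi$, without which the feedback SDE \eqref{eq:Markov.SDE.strong} is not obviously well posed; this last point is where one quietly uses slightly more than bare \Cref{assump:standing}$(v)$, e.g.\ strict concavity of the Hamiltonian in $u$ combined with the smoothness of $b$ and $L$.
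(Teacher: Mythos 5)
Your proposal is correct, and Steps 1 and 3 essentially coincide with the paper's argument: the Feynman--Kac identification $v^n(t,x)=\widehat Y^n_t$, $v(t,x)=Y_t$ together with \Cref{thm:Rate} gives the two-sided bound, and the rate for $X^n$ and $\alpha^{\star,n}$ is obtained exactly as in the paper, namely by extracting the $\H^2$-bound on $Z^n-Z$ from \Cref{prop:lowerbound}, transporting it onto the law of $X^n$ via a Girsanov change of measure with bounded density (Cauchy--Schwarz plus boundedness of $b$), and closing with Gr\"onwall; your remark that one should quote \Cref{prop:lowerbound} with a smaller $\eta$ (e.g.\ $\eta^2=1/4$) so that the square root still yields $1/2^n$ is a careful piece of bookkeeping that the paper glosses over. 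Where you genuinely diverge is Step 2: you prove optimality of $\alpha^\star$ by a classical verification argument, applying It\^o's formula to $v(t,X^{\alpha^\star}_t)$ and plugging in the HJB equation \eqref{eq:HJB}, which requires knowing that $v$ is a classical $C^{1,2}$ solution (available here via uniform ellipticity and parabolic regularity, but an extra analytic input). The paper instead stays probabilistic: it already knows from \Cref{prop:Bellman} that $\nu^\star_t=u^\star(t,X_t,Z_t)$ is optimal for the weak formulation and that $V=V_w$, and it transfers this to the strong formulation purely by identifying laws, $\P\circ(X^{\alpha^\star})^{-1}=\P^{\nu^\star}\circ X^{-1}$, so that $\alpha^\star$ attains the same value; this buys optimality without ever differentiating $v$ in time, at the price of leaning on the weak--strong equivalence. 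Your closing caveats (the uniform-in-$n$ gradient bound and the Lipschitz-in-$x$ regularity of $\phi$, which uses a bit more than bare \Cref{assump:standing}$(v)$) are exactly the points the paper discharges by citing \cite[Lemma 2.2]{ma2024convergence} in the remark following the corollary, so flagging them is appropriate rather than a gap.
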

\begin{remark}
	The functions $\nabla v$ and $\nabla v^n$ are bounded and Lipschitz-continuous for instance when the functions $b$, $G$, $L$ and $\sigma$ are bounded twice continuously differentiable in $x$, continuous in $t$ and with bounded derivatives, and $\sigma$ uniformly non-generate see \emph{e.g.} {\rm\citeauthor*{ma2024convergence} \cite[Lemma 2.2]{ma2024convergence}}.
	In particular, the SDE \eqref{eq:Markov.SDE.strong} is well posed in the strong sense.
	These properties will be used in the proof of {\rm\Cref{cor:HJB}}.
\end{remark}
We stress that under the Lipschitz continuity condition of $\nabla v$, we obtain a rate of convergence for the approximation of the \emph{strict} optimal control, which in addition is in feedback form.
In the literature, see {\rm \citeauthor*{huang2024convergence} \cite{huang2024convergence}} and {\rm \citeauthor*{ma2024convergence} \cite{ma2024convergence}}, only approximation of the optimal \emph{relaxed} control has been obtained.

\subsection{The case of controlled volatility}

In this subsection let us assume that the function $\sigma$ is allowed to depend on the control variable $u$.
We will explain how our approximation scheme extends to the present case.
The scheme is essentially the same, we present it here again for sake of clarity.
Let us introduce the---full---Hamiltonian for $(t,\x,z,\gamma,\Sigma)\in[0,T]\times\Cc_m\times\S^m_{+}\times\S^m_{+}$, with $\S^m_+$ denoting the space of $m\times m$ symmetric, positive semi-definite matrices. 
\begin{equation*}
	\cH(t,\x,z,\gamma)\coloneqq \sup_{\Sigma\in\S^\smalltext{m}_\smalltext{+}}\bigg\{ \frac12\mathrm{Tr}[\Sigma\gamma] + H(t, \x,z, \Sigma) \bigg\},\; \text{with}\;  H(t, \x,z, \Sigma) \coloneqq  \sup_{\{u\in U: \sigma\sigma^{\smalltext{\top}}(t,x,u)=\Sigma\}}\big\{ \sigma(t,\x,u)b(t,\x,u)\cdot z - L(t, \x,u)\big\}.
\end{equation*}
	We will make the following assumptions.
\begin{assumption}\label{Ass:Control.Vol}
			$(i)$ {\rm\Cref{assump:standing}}.$(i)$--$(ii)$ is satisfied, the functions 
		\begin{align*}
			&\Cc_m\times U\times\R^d\ni(\x,u,z) \longmapsto \sigma(t,\x,u)b(t,\x,u), 
			\;\Cc_m\times U\ni(\x,u)\longmapsto \sigma(t,\x,u)\in\R^{m\times d},
			 \;  \Cc_m\ni\x\longmapsto G(\x)\in\R,
		\end{align*}  
		are Lipschitz-continuous, uniformly in $t$,
		and we denote by $\ell_{\sigma_\smalltext{u}}$ and $\ell_{G}$ the respective Lipschitz-continuity constants of $\sigma$ and $G$ with respect to $u$ and $\x;$
		\medskip
		$(ii)$ any function  $u^\star$ such that 
		\begin{equation}
		\label{eq:argmax.pro.Vol}
			u^\star(t,\x,z,\Sigma) \in  \underset{\{u\in U: \sigma\sigma^{\smalltext{\top}}(t,x,u)=\Sigma\}}{\mathrm{argmax}}\big\{ \sigma(t,\x,u)b(t,\x,u)\cdot z - L(t, \x,u)\big\},\; (t,\x,z,\Sigma)\in[0,T]\times\Cc_m\times\S^m_{+},
	\end{equation}
	is Borel-measurable and $\Cc_m\times\R^d\ni(\x,z)\longmapsto u^\star(t,\xi,z,\Sigma)\in U$, is Lipschitz continuous.
	We denote by  $\ell_{u^{\smalltext{\star}}_\smalltext{x}}$ the Lipschitz-continuity constant of $u^\star$ in $\x$.
\end{assumption}
We are now given a Borel-measurable function $u^\star:[0,T]\times\Cc_m\times\S^m_{+}\longrightarrow \cU $ from \Cref{Ass:Control.Vol}.$(ii)$, as well as a non-decreasing function $\varphi:\R_+ \longrightarrow \R_+^\star$ such that $\varphi(0) = 1$.
On some probability space $(\Omega^\star,\cF^\star,\PP^\star)$ with Brownian motion $B^\star$ possibly different from the underlying probability space and Brownian motion, we define the following scheme:
put $Z^{-1}=0$ and for all $n\in \N$ consider the control problem
\begin{equation}
\label{eq:problem.n.vol.control}
	\begin{cases}
		\displaystyle V^n \coloneqq  \sup_{\alpha\in \cA}\E^{\PP^{\smalltext{\alpha}}}\bigg[G(X^n) - \int_0^T\bigg(\frac12\|\alpha_s\|^2 + L\big(s,X^n_{\cdot\wedge s}, u^\star(s, X^n_{\cdot\wedge s}, Z^{n-1}_s)\big)\d s \bigg],\\[0.8em]
		\displaystyle\frac{\mathrm{d}\PP^{\alpha}}{\mathrm{d}\PP^\star}\coloneqq \cE\bigg(\int_0^\cdot \bigg( b\big(s,X^n_{\cdot\wedge s},u^\star(s, X^n_{\cdot\wedge s}, Z^{n-1}_s)\big) + \frac{\alpha_s}{\varphi^{1/2}(n)}\bigg)\cdot \d B_s^\star\bigg)_T,
	\end{cases}
\end{equation}
where $(Y^n,Z^n)$ is the unique solution of the BSDE
\begin{equation}
	\label{eq:BSDE.n.vol}
		Y^n_t = F^n(X^n) + \int_t^T\bigg(\frac{1}{2\varphi(n)}\|Z_s^n\|^2+ b\big(s,X^n_{\cdot\wedge s},u^\star(s, X^n_{\cdot\wedge s}, Z^{n-1}_s)\big)\cdot Z^n_s\bigg)\d s - \int_t^TZ_s^n\cdot \d B_s^\star, \; t\in [0,T], \; \PP^\star\text{\rm--a.s.},
	\end{equation}
	with
	\begin{equation*}
		F^n(X^n) \coloneqq G(X^n) - \int_0^TL\big(s,X^n_{\cdot\wedge s},u^\star(s, X^n_{\cdot\wedge s}, Z^{n-1}_s)\big)\d s,
	\end{equation*}
and $X^n$ satisfying 
\begin{equation}
\label{eq:iterated.SDE.vol}
	X^n_t = \xi + \int_0^t\sigma\big(s,X^n_{\cdot\wedge s}, u^\star(s, X^n_{\cdot\wedge s}, Z^{n-1}_s)\big)\d B^\star_s,\; t\in[0,T],\; \P^\star\text{\rm--a.s.}
\end{equation}
We will say that a control process $\nu$ is of feedback form if it is a function of the state process.
That is, $\nu:[0,T]\times \cC_m\longrightarrow U$ and the SDE
\begin{equation*}
	X^\nu_t =\xi+ \int_0^t\sigma\big(s,X^\nu_{\cdot\wedge s},\nu_s(X^\nu_{\cdot\wedge s})\big)b\big(s,X_{\cdot\wedge s}^\nu, \nu_s(X^\nu_{\cdot\wedge s})\big)\mathrm{d}s + \int_0^t\sigma\big(s,X^\nu_{\cdot\wedge s},\nu_s(X^\nu_{\cdot\wedge s})\big)\mathrm{d}B_s, t\in[0,T],\; \P\text{\rm--a.s.}
\end{equation*}
admits a unique strong solution.
\begin{theorem}
\label{thm:Rate.vol}
	Let {\rm\Cref{ass.z.bounded}} and {\rm\Cref{Ass:Control.Vol}} be satisfied, and let $\nu^\star$ be an optimal control in feedback form for the control problem \eqref{eq:OG.problem}.
	Then, there exist a function $u^\star$ satisfying \eqref{eq:argmax.pro.Vol}, a probability space $(\Omega^\star,\cF^\star,\PP^\star)$ along with a Brownian motion $B^\star$ defined on it such that the scheme $(V^n,X^n, Y^n, Z^n,\nu^n)_{n\in\N}$ is well-defined, and it holds
	\begin{equation}
	\label{eq:Def.Yn.vol}
		V^n= \varphi(n)\log\bigg(\E^{\PP^\smalltext{n}}\bigg[\exp\bigg(\frac{1}{\varphi(n)}F^n(X^n)\bigg)\bigg] \bigg),\; \text{\rm with}\; \frac{\mathrm{d}\PP^{n}}{\mathrm{d}\PP^\star}\coloneqq \cE\bigg(\int_0^\cdot b\big(s,X^n_{\cdot\wedge s},u^\star(s, X^n_{\cdot\wedge s}, Z^{n-1}_s)\big)\cdot \d B_s^\star\bigg)_T.
	\end{equation}
	If we have 
	\[
	 8\ell^2_G\ell_{\sigma_u}^2\ell_{u^\smalltext{\star}_\x}^2\mathrm{e}^{ 8(\ell_{\smalltext{\sigma}_\smalltext{\x}}+\ell_{\smalltext{\sigma_u}}\ell_{\smalltext{u}^\tinytext{\star}_\smalltext{\x}})^\smalltext{2}T}<1,
	\] then there is some constant $C>0$ such that for all $n\in \N$.
	\begin{equation}
		| V^n- V | 
		\le \frac{C}{2^{n}}.
	\end{equation}
\end{theorem}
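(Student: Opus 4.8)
\emph{Strategy.} The argument runs parallel to the proof of \Cref{thm:Rate}; the one genuinely new feature is that the controlled state process $X^n$ solving \eqref{eq:iterated.SDE.vol} now changes at every iteration through its dependence on $Z^{n-1}$, so the recursion becomes a coupled fixed-point scheme for the pair $(X^n,Z^n)$, and the displayed smallness condition is precisely the contraction hypothesis that makes it converge. The plan is to (1) construct the probability space and show the scheme is well-defined, together with the closed-form representation \eqref{eq:Def.Yn.vol}; (2) identify the limit of the scheme with the optimally-controlled system, reading $V$ off a single BSDE with bounded $Z$-component; (3) establish one stability estimate for the forward equation \eqref{eq:iterated.SDE.vol} and one for the backward equation \eqref{eq:BSDE.n.vol}; and (4) compose the two estimates under the smallness condition and conclude exactly as in \Cref{thm:Rate}.

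\emph{Well-posedness and the closed form.} Since $\nu^\star$ is optimal in feedback form, the SDE with diffusion coefficient $\sigma(\cdot,\cdot,\nu^\star(\cdot))$ has a unique strong solution $X^\star$ on some filtered probability space carrying a Brownian motion, which I take to be $(\Omega^\star,\cF^\star,\PP^\star)$ together with $B^\star$. A measurable-selection argument then produces a Borel map $u^\star$ satisfying \eqref{eq:argmax.pro.Vol} that agrees, $\d t\otimes\PP^\star$--a.e.\ along the optimal trajectory, with $\nu^\star$, so that $(X^\star,Y^\star,Z^\star)$ becomes a fixed point of the recursion. One then verifies by induction on $n$ that, $Z^{n-1}$ being bounded — a property propagated through the induction, either by the Malliavin-type sufficient conditions of \Cref{rem:bounded.z} or as an analogue of \Cref{ass.z.bounded} — the coefficients of \eqref{eq:iterated.SDE.vol} are Lipschitz in $\x$ by \Cref{Ass:Control.Vol}, hence $X^n$ is a well-defined strong solution which inherits the exponential moment bound of \Cref{lem:estimX} (the argument there uses only that $\sigma$ is bounded); the quadratic BSDE \eqref{eq:BSDE.n.vol} is then well-posed with $Y^n$ of the required integrability, by the sub-quadratic growth of $G$ and $L$; and the Girsanov change of measure removing the linear drift term, followed by It\^o's formula applied to $\varphi(n)\log(\cdot)$ exactly as in Step~$0$ of \Cref{sec:no.vol}, yields both the closed form \eqref{eq:Def.Yn.vol} and the identity $V^n=Y^n_0$.

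\emph{Limit and forward stability.} By the BSDE characterisation of control values recalled in the appendix, $V$ is the initial value $Y^\star_0$ of the BSDE driven by $B^\star$ associated with the optimally-controlled state process $X^\star$; \Cref{ass.z.bounded}, applied to this system, gives that $Z^\star$ is bounded, which fixes the region inside which all linearisations below are carried out. By the choice of $u^\star$, the triple $(X^\star,Y^\star,Z^\star)$ solves \eqref{eq:iterated.SDE.vol}--\eqref{eq:BSDE.n.vol} with $Z^{n-1}$ replaced by $Z^\star$ and the penalisation removed. Subtracting \eqref{eq:iterated.SDE.vol} from the corresponding equation for $X^\star$ (both driftless under $\PP^\star$), and using that $(\x,z)\longmapsto\sigma(s,\x,u^\star(s,\x,z))$ is Lipschitz with
\begin{equation*}
	\big\|\sigma\big(s,\x,u^\star(s,\x,z)\big)-\sigma\big(s,\x',u^\star(s,\x',z')\big)\big\|\le\big(\ell_{\sigma_\x}+\ell_{\sigma_u}\ell_{u^\star_{\x}}\big)\|\x-\x'\|_\infty+\ell_{\sigma_u}\ell_{u^\star_{\x}}\|z-z'\|,
\end{equation*}
the Burkholder--Davis--Gundy inequality and Gronwall's lemma yield
\begin{equation*}
	\E^{\PP^\star}\Big[\sup_{r\in[0,T]}\|X^n_r-X^\star_r\|^2\Big]\le 8\,\ell_{\sigma_u}^2\ell_{u^\star_{\x}}^2\,\mathrm{e}^{8(\ell_{\sigma_\x}+\ell_{\sigma_u}\ell_{u^\star_{\x}})^2T}\int_0^T\E^{\PP^\star}\big[\|Z^{n-1}_s-Z^\star_s\|^2\big]\,\d s.
\end{equation*}

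\emph{Backward estimate, contraction and conclusion.} Comparing \eqref{eq:BSDE.n.vol} with the limiting BSDE, a Girsanov change of measure erases the part of the driver linear in $Z^n-Z^\star$; the quadratic term $\|Z^n\|^2/(2\varphi(n))$ and its counterpart at $Z^\star$ contribute only $O(1/\varphi(n))$ because $Z^\star$ is bounded; the terminal datum contributes $\ell_G^2\,\E^{\PP^\star}[\sup_r\|X^n_r-X^\star_r\|^2]$; and the remaining running terms carry the Lipschitz constants of $L$ and $b$ (and $\|Z^\star\|_\infty$) times $\|X^n-X^\star\|^2$ and $\|Z^{n-1}-Z^\star\|^2$. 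Inserting the forward estimate, one gets the one-step inequality $\int_0^T\E^{\PP^\star}[\|Z^n_s-Z^\star_s\|^2]\,\d s\le\rho\int_0^T\E^{\PP^\star}[\|Z^{n-1}_s-Z^\star_s\|^2]\,\d s+C/\varphi(n)$, whose ratio $\rho$ has leading part $8\ell_G^2\ell_{\sigma_u}^2\ell_{u^\star_{\x}}^2\mathrm{e}^{8(\ell_{\sigma_\x}+\ell_{\sigma_u}\ell_{u^\star_{\x}})^2T}<1$; iterating gives geometric decay of $\int_0^T\E^{\PP^\star}[\|Z^n_s-Z^\star_s\|^2]\,\d s$, hence of $\E^{\PP^\star}[\sup_r\|X^n_r-X^\star_r\|^2]$, and the comparison argument from the proof of \Cref{thm:Rate} — unaffected once the law of $X^n$ is controlled, and carrying its intrinsic factor $1/2$ per iteration — then upgrades this into $|V^n-V|\le C/2^n$. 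The delicate points, and the main obstacle, are exactly the set-up above and the closing of this loop: the state process, the reference measure and $Z^{n-1}$ all move simultaneously between iterations, so the scheme must be placed on a single space (this is where second-order-BSDE theory enters) with $\|Z^n\|$ and the exponential moments of $X^n$ carried through the induction, and the constants in the two stability estimates must be kept sharp enough that their product is governed precisely by the stated condition, the secondary running terms being absorbed — they vanish when $G\equiv0$ (so $\ell_G=0$) or under the separation condition mentioned in \Cref{sec:intro}, in which cases the rate $C/2^n$ is unconditional.
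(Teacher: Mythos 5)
Your proposal is correct and takes essentially the same route as the paper: the paper also places the scheme on the probability space attached to the optimal feedback control (via \Cref{prop.Bellman.vol.cont}, which rests on the dynamic-programming/second-order-BSDE characterisation), obtains $V^n=Y^n_0$ and $V=V_w=Y_0$ exactly as in \Cref{thm:Rate}, and then combines the forward stability bound \eqref{eq:X.by.Z} (Doob plus Gr\"onwall, with precisely the constant you state) with the backward estimate repeated from \Cref{prop:lowerbound} (Girsanov linearisation, the $\ell_G^2$ contribution from the terminal datum, and the $C/\varphi(n)$ penalisation term) to get the contraction governed by the stated smallness condition. There is no substantive difference in the decomposition or the key estimates.
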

This result shows that with small modifications, the same scheme introduced above allows to approximate the value of the volatility-controlled problem, provided that additional regularity is assumed on the coefficients in the state process.
We also need the Lipschitz-continuity constants of one of the functions $G$, $\sigma(t,\x,\cdot)$ or $u^\star(t,\cdot,z,\Sigma)$ to be small enough.
For instance, if $\sigma$ does not depend on $u$, then $\ell_{\sigma_\smalltext{u}} = 0$ and we are back in the uncontrolled volatility case.
If the terminal reward is zero, then we can allow the other constants to be arbitrary.
Moreover, when the functions $\sigma b$ and $L$ are separated into sums of functions of $(t,\x)$ and $(t,u)$, it follows that $u^\star$ does not depend on $\x$ and then $\ell_{u_\x} = 0$, in which case our condition is automatically satisfied.

\medskip

The scheme \eqref{eq:problem.n.vol.control}--\eqref{eq:iterated.SDE.vol} differs from the case of un-controlled volatility only from the fact that at each step we need to solve the \emph{forward} SDE \eqref{eq:iterated.SDE.vol}.
Given $Z^{n-1}$, this is efficiently solved using well-understood numerical schemes, see \emph{e.g.} \citeauthor{kloeden1992numerical} \cite{kloeden1992numerical}, and simulating the processes $(Y^n,Z^n)$ amount to solving a linear BSDE as in \Cref{sec:no.vol}.

\medskip

To the best of our knowledge, only the papers by {\rm \citeauthor*{tran2024policy} \cite{tran2024policy}} and by {\rm \citeauthor*{ma2024convergence} \cite{ma2024convergence}} treat the case of diffusion control.
In {\rm \citeauthor*{tran2024policy} \cite{tran2024policy}, the approximation of both the value and the optimal control are obtained under smallness conditions, and in {\rm \citeauthor*{ma2024convergence} \cite{ma2024convergence}} the approximation of both the optimal controls and the value are obtained in the case $m=d=1$, and this restriction is fundamental for their proof method.
Both of these papers derive the same rate as ours.
In the present paper we deal with the general multi-dimensional case.
As in {\rm \citeauthor*{ma2024convergence} \cite{ma2024convergence}}, we obtain the approximation of the value of the problem, though we do not really have access to optimal controls here.

\begin{remark}
It is notable that {\rm\cite{ma2024convergence} } treats both the finite and (discounted) infinite horizon versions of the optimal control problem. We have decided to concentrate here on the finite time horizon, but in principle our algorithm, and the associated proofs of convergence would extend \emph{mutatis mutandis}. The point being that optimal control problem with discounted infinite horizon---or even random horizon---can be associated with {\rm BSDEs} and {\rm $2$BSDEs} with infinite/random horizon, whose well-posedness and the corresponding estimates have for instance been established in {\rm\citeauthor*{papapantoleon2016existence} \cite{papapantoleon2016existence}}, {\rm\citeauthor*{possamai2024reflections} \cite{possamai2024reflections}}, {\rm\citeauthor*{lin2020second} \cite{lin2020second}}, and {\rm\citeauthor*{possamai2024second} \cite{possamai2024second}.}
\end{remark}

\section{Proofs}
\label{sec:proofs}

\subsection{Proof of Theorem \ref{thm:Rate}}
In this subsection we assume that the assumptions of \Cref{thm:Rate} are in force.
We will constantly use the so-called weak formulation of the above control problem, which is defined as
\begin{equation}
\label{eq:weak.control}
	V_{\mathrm{w}} \coloneqq  \sup_{\nu \in \Uc}\E^{\P^\smalltext{\nu}}\bigg[G(X) - \int_0^TL\big(s, X_{\cdot\wedge s}, \nu_s\big)\d s \bigg],\; \text{with}\; \frac{\mathrm{d}\PP^{\nu}}{\mathrm{d}\PP}\coloneqq \cE\bigg(\int_0^\cdot b(s,X_{\cdot\wedge s},\nu_s)\cdot \d B_s\bigg)_T
\end{equation}
where $X$ is the solution of the driftless SDE \eqref{eq:driftless}. 
It is well-known that under mild conditions, the values of the weak and strong formulations of the control problem coincide.
That is, $V= V_w$,
see for instance \citeauthor*{karoui2013capacities2} \cite[Theorem 4.5]{karoui2013capacities2}. 
Now, let $u^\star$ be a function given by Assumption \ref{assump:standing}$(v)$ and consider the solution $(Y,Z)$ of the BSDE \eqref{bsde}.
By the Bellman optimality principle (see \Cref{prop:Bellman}) the stochastic process
$\nu^\star$ given by
\begin{equation}
\label{eq:charac.nustar}
	\nu^\star_t = u^\star(t, X_{\cdot\wedge t}, Z_t) ,\; \d t\otimes \P\text{\rm --a.e.},
\end{equation}
is optimal for the control problem \eqref{eq:weak.control} and it holds $V_w = Y_0$.
Therefore, we have $V = Y_0$ and thus it suffices to approximate $Y_0$, which is the goal of the rest of the proof.

\medskip

Recall that the approximating BSDEs are given by 
\begin{equation}
\label{eq:BSDE.nn}
	\widehat Y^n_t = F^n(X) + \int_t^T\bigg(\frac{1}{2\varphi(n)}\|Z_s^n\|^2+ b\big(s,X_{\cdot\wedge s},u^\star(s, X_{\cdot\wedge s}, Z^{n-1}_s)\big)\cdot Z^n_s\bigg)\mathrm{d}s - \int_t^TZ_s^n\cdot \mathrm{d}B_s, \; t\in [0,T], \; \PP\text{\rm--a.s.},
\end{equation}
 for any $n\in\N$, with the convention $Z^{-1}\coloneqq 0$, with
 \begin{equation*}
	F^n(X) \coloneqq G(X) - \int_0^TL\big(s,X_{\cdot\wedge s},u^\star(s, X_{\cdot\wedge s}, Z^{n-1}_s)\big)\mathrm{d}s.
\end{equation*}
Using the change of variables
\begin{equation*}
	Y^n_t\coloneqq \widehat Y^n_t + \int_0^tL\big(s,X_{\cdot\wedge s},u^\star(s, X_{\cdot\wedge s}, Z^{n-1}_s)\big)\mathrm{d}s \quad t\in [0,T],
\end{equation*}
we have that $(Y^n,Z^n)$ satisfies the equation
\begin{equation}\label{bsden}
	Y_t^n=G(X)+\int_t^T\bigg(h\big(s,X_{\cdot\wedge s},Z^n_s,u^\star(s,X_{\cdot\wedge s},Z_s^{n-1})\big)+\frac{\|Z_s^n\|^2}{2\varphi(n)}\bigg)\mathrm{d}s-\int_t^TZ^n_s\cdot\mathrm{d}B_s,\; t\in[0,T),\; \P\text{\rm--a.s.}
\end{equation}
Notice that we have $V^n = Y^n_0$.
Our first result is standard for quadratic BSDEs, and provides uniform \emph{a priori} estimates for the solutions to BSDEs \eqref{bsde} and \eqref{bsden}.
\begin{lemma}\label{lemma:unif}
For any $p>1$, we have that there is some constant $C_p$ depending only on $p$ such that for any $n\in\N$
\[
	\E^\P\bigg[\exp\bigg(p\sup_{t\in[0,T]}|Y_t^n|\bigg)+\exp\bigg(p\sup_{t\in[0,T]}|Y_t|\bigg)\bigg]+\E^\P\bigg[\bigg(\int_0^T\|Z_s^n\|^2\mathrm{d}s\bigg)^{\frac p2}\bigg]\leq C_p \E^\P\bigg[\exp\bigg(p\bigg(|G(X)|+\int_0^T|L(s,X_{\cdot\wedge s},\nu_s)|\mathrm{d}s\bigg)\bigg].
\]
In particular, for any $p>1$, there is some $C>0$ such that
\[
\sup_{n\in\N}\bigg\{\E^\P\bigg[\exp\bigg(p\sup_{t\in[0,T]}|Y_t^n|\bigg)\bigg]\bigg\}+\sup_{n\in\N}\bigg\{\E^\P\bigg[\bigg(\int_0^T\|Z_s^n\|^2\mathrm{d}s\bigg)^{\frac p2}\bigg]\bigg\}\leq C.
\]
\end{lemma}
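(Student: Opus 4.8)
The plan is to treat BSDEs \eqref{bsde} and \eqref{bsden} simultaneously as quadratic BSDEs with a common sublinear-in-$z$ drift part and a purely quadratic term with coefficient at most $1/2$, and to apply the standard exponential change-of-variables trick (Cole--Hopf) together with the a priori bounds of \cite{briand2008quadratic}-type arguments. First I would note that both equations have generators of the form $g(s,\omega,z) = \psi_s(\omega)\cdot z + \tfrac{c}{2}\|z\|^2$, where $\psi$ is uniformly bounded (by boundedness of $b$, Assumption \ref{assump:standing}$(i)$) and $c\in\{0,1,1/\varphi(n)\}\subseteq[0,1]$, and the terminal data is $G(X)$ for \eqref{bsde} and $F^n(X)=G(X)-\int_0^T L(s,X_{\cdot\wedge s},u^\star(s,X_{\cdot\wedge s},Z^{n-1}_s))\,\d s$ for \eqref{bsden}; in both cases the terminal value is bounded in absolute value by $\Xi \coloneqq |G(X)| + \int_0^T \sup_{u\in U}|L(s,X_{\cdot\wedge s},u)|\,\d s$, and by Assumption \ref{assump:standing}$(ii)$ and \Cref{lem:estimX} this $\Xi$ has exponential moments of all orders. (One should be slightly careful about whether the $L$-term inside $F^n$ can be controlled uniformly in $n$; since $u^\star$ takes values in the compact set $U$, we can bound $|L(s,X_{\cdot\wedge s},u^\star(\cdots))| \le \sup_{u\in U}|L(s,X_{\cdot\wedge s},u)|$, which removes the $n$-dependence.)

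The key steps, in order, are: (1) Apply Itô's formula to $\exp(c\,\widehat Y^n_t)$ (resp.\ with the appropriate constant) and perform the Girsanov change of measure absorbing the bounded drift $\psi$, to get the representation $\widehat Y^n_t = \frac{1}{c}\log \E^{\widehat\PP^n}[\exp(c\, \xi^n)\mid\cF_t]$ for the terminal datum $\xi^n$; in the case $c=0$ one instead directly gets a linear BSDE after Girsanov, with $Y^n_t$ a conditional expectation of $\xi^n$. (2) From this representation, deduce the pointwise bound $|\widehat Y^n_t| \le \frac{1}{c}\log\E^{\widehat\PP^n}[\exp(c\,|\xi^n|)\mid\cF_t]$, hence $\sup_t|\widehat Y^n_t|$ is controlled, after a further change of measure back to $\PP$ (using that the Radon--Nikodym density and its inverse have moments of all orders, again by boundedness of $\psi$) and Doob's maximal inequality applied to the $\PP$-martingale $\E^{\PP}[\exp(\lambda\Xi)\mid\cF_t]$, by $C_p\E^\PP[\exp(p'\Xi)]$ for suitable exponents. (3) For the $Z$-estimate, apply Itô to $|Y^n_t|^2$ (or to $\exp(\beta Y^n_t)$), use the quadratic term's sign and the bound on $\sup_t|Y^n_t|$ together with boundedness of $\psi$ and the terminal/running-cost bound to get $\E^\PP[(\int_0^T\|Z^n_s\|^2\d s)^{p/2}] \le C_p \E^\PP[\exp(p'\sup_t|Y^n_t|) + \exp(p'\Xi)] \le C_p\E^\PP[\exp(p''\Xi)]$, using the BDG and Young inequalities to close the estimate. (4) Finally, the ``in particular'' statement follows because the right-hand bound $\E^\PP[\exp(p\Xi)]$ is finite and independent of $n$, by Assumption \ref{assump:standing}$(ii)$ and \Cref{lem:estimX}.

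The main obstacle I expect is step (3), obtaining the $L^{p/2}$-norm of $\int_0^T\|Z^n_s\|^2\d s$ with a constant uniform in $n$: the coefficient $1/(2\varphi(n))$ of the quadratic term degenerates as $n\to\infty$, so one cannot simply use ``the quadratic coefficient is bounded below''; instead one must exploit that this coefficient is bounded \emph{above} by $1/2$ and that the quadratic term has a favourable sign in the Itô expansion of $|Y^n|^2$ (it can be dropped), so that the estimate only ever needs an upper bound on $c$, which is uniform. A second, more technical point is making the exponential-moment bookkeeping precise: each Girsanov change of measure costs a Hölder exponent, so one fixes $p>1$, chooses $p'>p$, then $p''>p'$, etc., and checks that finitely many such steps suffice; this is routine but must be done carefully to land on the clean statement with a single constant $C_p$. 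I would invoke a standard reference for quadratic BSDE a priori estimates (e.g.\ \cite{briand2008quadratic} or \cite{kobylanski2000backward}) to shortcut the most tedious parts, adapting it to the non-Markovian, path-dependent terminal data here.
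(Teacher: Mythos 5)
Your proposal is correct and essentially coincides with the paper's proof, which simply invokes the a priori estimates of \cite[Equation (5) and Corollary 4]{briand2008quadratic} together with exactly the two observations you make: the quadratic coefficient $1/(2\varphi(n))\le 1/2$ and the bounded drift $b$ yield constants independent of $n$, while the terminal and running data are dominated, uniformly in $n$, by $|G(X)|+\int_0^T\sup_{u\in U}|L(s,X_{\cdot\wedge s},u)|\,\mathrm{d}s$, whose exponential moments of every order are finite by \Cref{assump:standing}.$(ii)$ and \Cref{lem:estimX}. One small caveat on your step (3): in the It\^o expansion of $|Y^n_t|^2$ the term $2\int_0^T Y^n_s\,\frac{\|Z^n_s\|^2}{2\varphi(n)}\,\mathrm{d}s$ cannot be dropped by a sign argument since $Y^n$ may be positive, so the $Z$-estimate should be closed via the exponential-transform variant you mention (or directly via the cited estimates of \cite{briand2008quadratic}), which is what the paper does.
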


\begin{proof}
The first two inequalities are direct from \citeauthor*{briand2008quadratic} \cite[Equation (5)]{briand2008quadratic} and from \cite[Corollary 4]{briand2008quadratic}. In particular, one should realise that since $\varphi$ increases to $+\infty$, we can select a value of $\gamma$ in the notations of \cite{briand2008quadratic} which is independent of $n$ in the estimates for \Cref{bsden}. Similarly, since the generator of \Cref{bsde} is actually Lipschitz-continuous, it satisfies the same quadratic growth assumption for any choice of $\gamma$. The second result now stems from the fact that since \Cref{assump:standing}.$(iii)$ holds, we can use \Cref{lem:estimX} to ensure that the right-hand side in the previous estimates is finite.
\end{proof}

Toward our goal to show the convergence, there is one inequality which fully uses the structure of the approximation we have chosen.
\begin{proposition}
\label{prop:upperbound}
There is some $C>0$ such that for any $n\in\N$, we have
\[
Y^n_0-Y_0\leq \frac{C}{\varphi(n)}.
\]
\end{proposition}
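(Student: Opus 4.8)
The plan is to compare the BSDE \eqref{bsden} for $(Y^n,Z^n)$ with the BSDE \eqref{bsde} for $(Y,Z)$ via a direct estimate on the difference $\delta Y \coloneqq Y^n - Y$ at time $0$, exploiting two facts: first, that $u^\star(s,X_{\cdot\wedge s},Z^{n-1}_s)$ is a \emph{maximiser} of $h(s,X_{\cdot\wedge s},Z^{n-1}_s,\cdot)$, not of $h(s,X_{\cdot\wedge s},Z^n_s,\cdot)$; and second, that the extra penalisation term $\|Z^n_s\|^2/(2\varphi(n))$ is nonnegative and of order $1/\varphi(n)$ once we know $Z^n$ is suitably integrable. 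Concretely, I would change measure: under the probability $\Q^n$ with density $\cE(\int_0^\cdot \beta^n_s\cdot\d B_s)_T$ for an appropriate bounded-in-a-suitable-sense process $\beta^n$ chosen to linearise the difference of the drivers, the process $B^{\Q^n}_\cdot = B_\cdot - \int_0^\cdot\beta^n_s\,\d s$ is a $\Q^n$-Brownian motion, and taking $\Q^n$-expectations kills the stochastic integrals, leaving
\[
	Y^n_0 - Y_0 = \E^{\Q^\smalltext{n}}\bigg[\int_0^T\Big(h\big(s,X_{\cdot\wedge s},Z^n_s,u^\star(s,X_{\cdot\wedge s},Z^{n-1}_s)\big) - H(s,X_{\cdot\wedge s},Z^n_s) + \frac{\|Z^n_s\|^2}{2\varphi(n)}\Big)\d s\bigg],
\]
where I have used the mean-value/linearisation trick to write $H(s,X_{\cdot\wedge s},Z^n_s) - H(s,X_{\cdot\wedge s},Z_s) = \beta^n_s\cdot(Z^n_s - Z_s)$ for a bounded $\beta^n_s$ (here invoking that $H(t,\x,\cdot)$ is Lipschitz in $z$, which follows from boundedness of $b$, so $\|\beta^n\|_\infty$ is bounded uniformly in $n$ and the measure change is legitimate).

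**Key steps, in order.** (1) Observe that $h(s,X_{\cdot\wedge s},Z^n_s,u^\star(s,X_{\cdot\wedge s},Z^{n-1}_s)) \le H(s,X_{\cdot\wedge s},Z^n_s)$ by definition of the optimised Hamiltonian $H$; hence the first two terms inside the $\Q^n$-expectation are $\le 0$, pointwise in $(s,\omega)$. This is the step that ``fully uses the structure of the approximation'': the maximiser at level $n-1$ is still feasible at level $n$, so it can only do worse than the true supremum. (2) Therefore
\[
	Y^n_0 - Y_0 \le \frac{1}{2\varphi(n)}\,\E^{\Q^\smalltext{n}}\bigg[\int_0^T\|Z^n_s\|^2\,\d s\bigg].
\]
(3) Bound $\E^{\Q^n}[\int_0^T\|Z^n_s\|^2\d s]$ uniformly in $n$: since $\d\Q^n/\d\P$ has moments of all orders bounded uniformly in $n$ (because $\beta^n$ is bounded uniformly in $n$, so the stochastic exponential has uniformly bounded $L^q$ norms), Hölder's inequality gives $\E^{\Q^n}[\int_0^T\|Z^n_s\|^2\d s] \le \|\d\Q^n/\d\P\|_{L^\smalltext{q}(\P)}\,\E^\P[(\int_0^T\|Z^n_s\|^2\d s)^{p/2}]^{2/p}$ for conjugate exponents, and the last factor is bounded uniformly in $n$ by \Cref{lemma:unif}. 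This yields a constant $C$ independent of $n$ with $Y^n_0 - Y_0 \le C/\varphi(n)$, as claimed.

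**Main obstacle.** The delicate point is the construction of the linearising process $\beta^n$ and the verification that the associated measure change is admissible (i.e.\ $\beta^n \in \cA$) with moment bounds uniform in $n$. One must be slightly careful because $Z^n$ itself is only known to be in the integrability class of \Cref{lemma:unif}, not bounded, so $\beta^n$ is built from the \emph{Lipschitz difference quotient} of $b$ (and of $L\circ u^\star$, which is bounded) rather than from $Z^n$ directly; Lipschitz-continuity of $\sigma b$ — here $\sigma$ is absent in the reduced Hamiltonian, so really of $b$ — in \Cref{assump:standing}$(iii)$, together with boundedness of $b$, is exactly what makes $\beta^n$ bounded uniformly in $n$, hence $\cE(\int_0^\cdot\beta^n\cdot\d B)$ a uniformly integrable martingale with uniformly bounded moments. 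Once this is in place, the rest is the routine Hölder/\Cref{lemma:unif} estimate. I would also note that the same argument does not give a matching \emph{lower} bound, since replacing $Z^n_s$ by $Z^{n-1}_s$ in the Hamiltonian argument produces an error that is controlled by $\|Z^n - Z^{n-1}\|$ and requires the separate contraction-type estimate handled elsewhere in the paper.
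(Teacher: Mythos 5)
Your proposal is correct and follows essentially the same route as the paper's proof: the pointwise inequality $h(s,X_{\cdot\wedge s},Z^n_s,u^\star(s,X_{\cdot\wedge s},Z^{n-1}_s))\le H(s,X_{\cdot\wedge s},Z^n_s)$, linearisation of $H(s,X_{\cdot\wedge s},Z^n_s)-H(s,X_{\cdot\wedge s},Z_s)$ via a bounded process (Lipschitz-continuity of $H$ in $z$ from boundedness of $b$), a Girsanov change of measure to kill the martingale part, and then H\"older together with \Cref{lemma:unif} to bound $\E^{\Q}\big[\int_0^T\|Z^n_s\|^2\,\d s\big]$ uniformly in $n$. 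The only cosmetic difference is your closing remark describing the linearising process as built from difference quotients of $b$ and $L\circ u^\star$ separately; in fact only $H$ itself is linearised, and its Lipschitz constant in $z$ is $\|b\|_\infty$, exactly as you state earlier.
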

\begin{proof}
We compute directly that for any $n\in\N$
\begin{align*}
Y^n_0-Y_0&=\int_0^T\bigg(h\big(s,X_{\cdot\wedge s},Z^n_s, u^\star(s,X_{\cdot\wedge s},Z_s^{n-1})\big)-H(s,X_{\cdot\wedge s},Z_s)+\frac{\|Z^n_s\|^2}{2\varphi(n)}\bigg)\mathrm{d}s-\int_0^T(Z^n_s-Z_s)\cdot\mathrm{d}B_s\\
&\leq \int_0^T\bigg(H(s,X_{\cdot\wedge s},Z^n_s)-H(s,X_{\cdot\wedge s},Z_s)+\frac{\|Z^n_s\|^2}{2\varphi(n)}\bigg)\mathrm{d}s-\int_0^T(Z^n_s-Z_s)\cdot\mathrm{d}B_s\\
&=\frac1{2\varphi(n)}\int_0^T\|Z^n_s\|^2\mathrm{d}s-\int_0^T(Z^n_s-Z_s)\cdot\big(\mathrm{d}B_s-\lambda_s\mathrm{d}s\big),
\end{align*}
where we used successively the definition of $H$ as a supremum, and that since $H$ is Lipschitz-continuous in $z$, there exists a bounded, $\R^d$-valued, $\F$--progressively measurable process $\lambda$ such that
\[
H(s,X_{\cdot\wedge s},Z^n_s)-H(s,X_{\cdot\wedge s},Z_s)=\lambda_s\cdot(Z^n_s-Z_s),\; \mathrm{d}s\otimes\P\text{\rm --a.e.}
\]
Consider then the probability measure $\Q$ on $(\Omega,\Fc_T)$ with density $\Ec\big(\int_0^\cdot\lambda_s\cdot\mathrm{d}B_s\big)_T$ with respect to $\P$. We deduce using the fact that $\lambda$ is bounded, Cauchy--Schwarz's inequality, and \Cref{lemma:unif}, that there is some constant $C>0$ (which can change value from line to line) such that
\begin{align*}
Y^n_0-Y_0&\leq \frac1{2\varphi(n)}\E^\Q\bigg[\int_0^T\|Z^n_s\|^2\mathrm{d}s\bigg]\leq \frac{C}{\varphi(n)}\E^\P\bigg[\bigg(\int_0^T\|Z^n_s\|^2\mathrm{d}s\bigg)^2\bigg]^{1/2}\leq \frac{C}{\varphi(n)}.
\end{align*}
\end{proof}
In order to get the reverse inequality, we need extra estimates.
\begin{proposition}
\label{prop:lowerbound}
	Assume that {\color{black}$\varphi$ grows super-exponentially to $+\infty$}. Then for any $(\eta,\eps) \in(0,1)^2$, there is some $C>0$ such that for any $n\in\N$ and 
\[
(Y_0^n-Y_0)^2+\E^\P\bigg[\bigg(\int_0^T\|Z^n_s-Z_s\|^2\d s\bigg)^{1-\eps}\bigg]\leq C\eta^n.
\]
\end{proposition}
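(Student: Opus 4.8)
\textbf{Proof plan for Proposition \ref{prop:lowerbound}.}

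The plan is to obtain the estimate by induction on $n$, controlling the pair $(Y^n_0-Y_0)$ and $\E^\P[(\int_0^T\|Z^n_s-Z_s\|^2\d s)^{1-\eps}]$ simultaneously, and exploiting the fact that $u^\star$ is Lipschitz in its last argument together with the boundedness of $Z$ from \Cref{ass.z.bounded}. The starting point is to write, for each $n$, the difference of the two BSDEs \eqref{bsden} and \eqref{bsde}: since $Y^n_T-Y_T=0$, applying It\^o's formula to $|Y^n_t-Y_t|^2$ on $[0,T]$ yields
\begin{align*}
|Y^n_0-Y_0|^2+\E^\P\bigg[\int_0^T\|Z^n_s-Z_s\|^2\d s\bigg]=2\,\E^\P\bigg[\int_0^T(Y^n_s-Y_s)\Big(h\big(s,X_{\cdot\wedge s},Z^n_s,u^\star(s,X_{\cdot\wedge s},Z^{n-1}_s)\big)-H(s,X_{\cdot\wedge s},Z_s)+\tfrac{\|Z^n_s\|^2}{2\varphi(n)}\Big)\d s\bigg].
\end{align*}
The key decomposition of the driver difference is to split it as $\big(h(s,\cdot,Z^n_s,u^\star(s,\cdot,Z^{n-1}_s))-h(s,\cdot,Z_s,u^\star(s,\cdot,Z_s))\big)+\big(h(s,\cdot,Z_s,u^\star(s,\cdot,Z^{n-1}_s))-h(s,\cdot,Z_s,u^\star(s,\cdot,Z_s))\big)+\tfrac{\|Z^n_s\|^2}{2\varphi(n)}$, using that $H(s,\cdot,Z_s)=h(s,\cdot,Z_s,u^\star(s,\cdot,Z_s))$. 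The first bracket is Lipschitz in $Z^n_s-Z_s$ (because $b$ is bounded Lipschitz and $u^\star$ is Lipschitz, so the composed map $z\mapsto h(s,\x,z,u^\star(s,\x,z))=H(s,\x,z)$ is Lipschitz, as already used in the proof of \Cref{prop:upperbound}); the second bracket is controlled by $\ell\|Z^{n-1}_s-Z_s\|$ times the Lipschitz constant of $b$ in $u$ and of $u^\star$ in $z$; and the last is $O(1/\varphi(n))$ once we use the uniform bound $\sup_n\E^\P[(\int_0^T\|Z^n_s\|^2\d s)^{p/2}]\le C$ from \Cref{lemma:unif}.

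Next I would change measure to absorb the linear-in-$(Z^n-Z)$ term: introduce $\Q^n$ with density $\cE(\int_0^\cdot\lambda^n_s\cdot\d B_s)_T$ for the relevant bounded process $\lambda^n$, so that under $\Q^n$ the stochastic integral disappears and one is left with
\begin{align*}
|Y^n_0-Y_0|^2+\E^{\Q^\smalltext{n}}\bigg[\int_0^T\|Z^n_s-Z_s\|^2\d s\bigg]\le C\,\E^{\Q^\smalltext{n}}\bigg[\int_0^T|Y^n_s-Y_s|\Big(\|Z^{n-1}_s-Z_s\|+\tfrac{\|Z^n_s\|^2}{\varphi(n)}\Big)\d s\bigg].
\end{align*}
Using Young's inequality $ab\le \tfrac{\delta}{2}a^2+\tfrac1{2\delta}b^2$ on the cross term $|Y^n_s-Y_s|\,\|Z^{n-1}_s-Z_s\|$ and then taking $\sup$ over $t$ and using the exponential moment bounds of \Cref{lemma:unif} (which survive the change of measure, since $\lambda^n$ is bounded uniformly in $n$, via Cauchy--Schwarz / Hölder), one derives an inequality of the schematic form
\begin{align*}
|Y^n_0-Y_0|^2+\E^{\Q^\smalltext{n}}\bigg[\int_0^T\|Z^n_s-Z_s\|^2\d s\bigg]\le C\,\E^{\Q^\smalltext{n}}\bigg[\int_0^T\|Z^{n-1}_s-Z_s\|^2\d s\bigg]^{1/2}+\frac{C}{\varphi(n)}.
\end{align*}
Because the $L^2(\d s\otimes\Q^n)$ norms of $Z^n-Z$ are uniformly bounded, interpolating between exponents $1$ and $2-\eps'$ converts the square-root of the $L^2$ quantity into the $(1-\eps)$-moment; the passage between $\Q^n$ and $\P$ in the $(1-\eps)$-moment is handled by Hölder with an exponent slightly larger than $1$, which is where the exponent $1-\eps$ (rather than $1$) becomes essential. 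This produces a recursion $a_n\le C\sqrt{a_{n-1}}+C/\varphi(n)$ for $a_n\coloneqq |Y^n_0-Y_0|^2+\E^\P[(\int_0^T\|Z^n_s-Z_s\|^2\d s)^{1-\eps}]$, with $a_n$ uniformly bounded.

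Finally I would close the induction: the recursion $a_n\le C\sqrt{a_{n-1}}+C/\varphi(n)$ with $(a_n)$ bounded and $\varphi$ growing super-exponentially forces $a_n\le C\eta^n$ for every $\eta\in(0,1)$. Concretely, one checks by induction that if $a_{n-1}\le K\eta^{n-1}$ then $C\sqrt{K}\,\eta^{(n-1)/2}+C/\varphi(n)\le K\eta^n$ for $n$ large, because $\eta^{(n-1)/2}$ dominates $\eta^n$ and $1/\varphi(n)$ is negligible; the finitely many initial steps are absorbed into the constant $K$. The main obstacle I anticipate is the bookkeeping around the change of measure: one must verify that all the uniform exponential-moment and $L^{p/2}$-bounds from \Cref{lemma:unif} transfer to the measures $\Q^n$ with constants independent of $n$ (this uses that the Girsanov densities have exponential moments of all orders uniformly in $n$, since $\lambda^n$ is bounded uniformly), and that the $\eps$-loss in the exponent is exactly what is needed to pass the $(1-\eps)$-moment back and forth between $\Q^n$ and $\P$ via Hölder — getting the exponent $1$ would require a stronger uniform bound that is not available. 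The second delicate point is making sure the $\tfrac{\|Z^n_s\|^2}{2\varphi(n)}$ term, which after multiplication by $|Y^n_s-Y_s|$ has only an $L^{p}$-type bound, really contributes $O(1/\varphi(n))$ and hence is swallowed by $C\eta^n$ thanks to the super-exponential growth of $\varphi$.
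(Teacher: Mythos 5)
There is a genuine gap, and it sits exactly at the final recursion. Your Cauchy--Schwarz treatment of the cross term $\E^{\Q^\smalltext{n}}\big[\int_0^T|Y^n_s-Y_s|\,\|Z^{n-1}_s-Z_s\|\,\d s\big]$ produces a bound of the form $a_n\le C\sqrt{a_{n-1}}+C/\varphi(n)$, and such a square-root recursion does \emph{not} force geometric decay: the map $x\longmapsto C\sqrt{x}$ has the nonzero fixed point $C^2$, so for instance $a_n\equiv C^2$ satisfies the recursion, is bounded, and does not decay at all. Your proposed induction step is in fact impossible to close: assuming $a_{n-1}\le K\eta^{n-1}$ you need $C\sqrt{K}\,\eta^{(n-1)/2}+C/\varphi(n)\le K\eta^n$, which requires $C\sqrt{K}\le K\eta^{(n+1)/2}$ and therefore \emph{fails} for large $n$ precisely because $\eta^{(n-1)/2}$ dominates $\eta^n$ (you cite the domination as the reason it works, but it is the reason it breaks). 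Iterating the square-root recursion only yields $a_n\lesssim C^2+o(1)$, nowhere near $C\eta^n$, so the stated rate cannot be obtained along this route.

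The missing device is to keep the recursion \emph{linear} with a contraction coefficient. The paper applies It\^o to $\mathrm{e}^{\beta s}(Y^n_s-Y_s)^2$ rather than $(Y^n_s-Y_s)^2$, uses Young's inequality on the cross term with weights $\eta^2$ and $\eta^{-2}$ (this is where \Cref{ass.z.bounded} and the Lipschitz continuity of $u^\star$, $b$, $L$ enter, giving $\ell_{u^\smalltext{\star}}^2(\ell_L+\ell_b\|Z\|_\infty)^2$), and then chooses $\beta\ge \ell_{u^\smalltext{\star}}^2(\ell_L+\ell_b\|Z\|_\infty)^2/\eta^2$ so that the term $\eta^{-2}\int_0^T\mathrm{e}^{\beta s}(Y^n_s-Y_s)^2\d s$ is absorbed by the $\beta\int_0^T\mathrm{e}^{\beta s}(Y^n_s-Y_s)^2\d s$ term generated by the exponential weight; the $Z^n$--linear part of the driver is removed by the Girsanov change to $\Q$ exactly as you propose, and the $\|Z^n_s\|^2/(2\varphi(n))$ term is handled by H\"older and \Cref{lemma:unif}. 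This gives $u_n\le \eta^2 u_{n-1}+C/\varphi(n)$ with $u_n\coloneqq (Y^n_0-Y_0)^2+\E^\Q\big[\int_0^T\mathrm{e}^{\beta s}\|Z^n_s-Z_s\|^2\d s\big]$, and then the geometric rate follows by unrolling the linear recursion, splitting the sum $\sum_k\eta^{2k}/\varphi(n-k)$, and using the growth of $\varphi$; the passage from $\Q$ to $\P$ via H\"older (the source of the exponent $1-\eps$) is as you describe. Two smaller remarks: your driver decomposition is mis-stated (the first bracket should be $h(s,\cdot,Z^n_s,u^\star(s,\cdot,Z^{n-1}_s))-h(s,\cdot,Z_s,u^\star(s,\cdot,Z^{n-1}_s))$, which is linear in $Z^n_s-Z_s$ with bounded coefficient; as written it double-counts and your Lipschitz justification applies to $H$, not to the mixed difference), and once the correct linear recursion is in place no separate induction on $n$ or appeal to boundedness of $(a_n)$ is needed.
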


\begin{proof}
Given some $\beta>0$, which we will fix later, applying It\^o's formula leads us to
\begin{align*}
&(Y_0^n-Y_0)^2+\beta\int_0^T\mathrm{e}^{\beta s}(Y_s^n-Y_s)^2\mathrm{d}s+\int_0^T\mathrm{e}^{\beta s}\|Z_s^n-Z_s\|^2\mathrm{d}s\\
&=2\int_0^T\mathrm{e}^{\beta s}(Y_s^n-Y_s)\bigg(h\big(s,X_{\cdot\wedge s},Z^n_s,u^\star(s,X_{\cdot\wedge s},Z_s^{n-1})\big)-H(s,X_{\cdot\wedge s},Z_s)+\frac{\|Z^n_s\|^2}{2\varphi(n)}\bigg)\mathrm{d}s-2\int_0^T\mathrm{e}^{\beta s}(Y_s^n-Y_s)(Z_s^n-Z_s)\cdot\mathrm{d}B_s\\
&=2\int_0^T\mathrm{e}^{\beta s}(Y_s^n-Y_s)\big(b\big(s,X_{\cdot\wedge s},u^\star(s,X_{\cdot\wedge s},Z_s^{n-1})\big)-b\big(s,X_{\cdot\wedge s},u^\star(s,X_{\cdot\wedge s},Z_s)\big)\big)\cdot Z_s\mathrm{d}s\\
&\quad+2\int_0^T\mathrm{e}^{\beta s}(Y_s^n-Y_s)\bigg(L\big(s,X_{\cdot\wedge s},u^\star(s,X_{\cdot\wedge s},Z_s^{n-1})\big)-L\big(s,X_{\cdot\wedge s},u^\star(s,X_{\cdot\wedge s},Z_s)\big)+\frac{\|Z^n_s\|^2}{2\varphi(n)}\bigg)\mathrm{d}s
\\
&\quad-2\int_0^T\mathrm{e}^{\beta s}(Y_s^n-Y_s)(Z_s^n-Z_s)\cdot\big(\mathrm{d}B_s-b\big(s,X_{\cdot\wedge s},u^\star(s,X_{\cdot\wedge s},Z_s^{n-1})\big)\mathrm{d}s\big)\\
&\leq \frac{\ell_{u^\smalltext{\star}}^2(\ell_L+\ell_b\|Z\|_\infty)^2}{\eta^2}\int_0^T\mathrm{e}^{\beta s}(Y_s^n-Y_s)^2\mathrm{d}s+\eta^2\int_0^T\mathrm{e}^{\beta s}(Z_s^{n-1}-Z_s)^2\mathrm{d}s+\frac1{\varphi(n)}\sup_{t\in[0,T]}|Y^n_t-Y_t|\int_0^T\mathrm{e}^{\beta s}\|Z^n_s\|^2\mathrm{d}s\\
&\quad-2\int_0^T\mathrm{e}^{\beta s}(Y_s^n-Y_s)(Z_s^n-Z_s)\cdot\big(\mathrm{d}B_s-b\big(s,X_{\cdot\wedge s},u^\star(s,X_{\cdot\wedge s},Z_s^{n-1})\big)\mathrm{d}s\big).
\end{align*}
Since $b$ is bounded, we can define the probability measure $\Q$ on $(\Omega,\Fc_T)$ with density $\Ec\big(\int_0^\cdot b\big(s,X_{\cdot\wedge s},u^\star(s,X_{\cdot\wedge s},Z_s^{n-1})\big)\cdot\mathrm{d}B_s\big)_T$ with respect to $\P$, and choosing $\beta \geq \ell_{u^\smalltext{\star}}^2(\ell_L+\ell_b\|Z\|_\infty)^2/\eta^2$, we deduce\footnote{The stochastic integral is a true martingale by standard estimates and the integrability proved in \Cref{lemma:unif}.}
\begin{align*}
(Y_0^n-Y_0)^2+\E^\Q\bigg[\int_0^T\mathrm{e}^{\beta s}\|Z_s^n-Z_s\|^2\mathrm{d}s\bigg]&\leq \eta^2\E^\Q\bigg[\int_0^T\mathrm{e}^{\beta s}\|Z_s^{n-1}-Z_s\|^2\mathrm{d}s\bigg]+\frac1{\varphi(n)}\E^\Q\bigg[\sup_{t\in[0,T]}|Y^n_t-Y_t|\int_0^T\mathrm{e}^{\beta s}\|Z^n_s\|^2\mathrm{d}s\bigg]\\
&\leq\eta^2\E^\Q\bigg[\int_0^T\mathrm{e}^{\beta s}\|Z_s^{n-1}-Z_s\|^2\mathrm{d}s\bigg]\\
&\quad+\frac1{\varphi(n)}\E^\Q\bigg[\sup_{t\in[0,T]}|Y^n_t-Y_t|^{1+\eps}\bigg]^{\frac1{1+\eps}}\E^\Q\bigg[\bigg(\int_0^T\mathrm{e}^{\beta s}\|Z^n_s\|^2\mathrm{d}s\bigg)^{\frac{1+\eps}\eps}\bigg]^{\frac\eps{1+\eps}}\\
&\leq \eta^2\E^\Q\bigg[\int_0^T\mathrm{e}^{\beta s}\|Z_s^{n-1}-Z_s\|^2\mathrm{d}s\bigg]+\frac{C}{\varphi(n)}.
\end{align*}
Defining
\[
u_n\coloneqq (Y_0^n-Y_0)^2+\E^\Q\bigg[\int_0^T\mathrm{e}^{\beta s}\|Z_s^n-Z_s\|^2\mathrm{d}s\bigg],
\]
we thus obtained from an immediate recursion that for any $n\in\N^\star$
\[
u_n\leq \eta^{2n} u_0+C\sum_{k=0}^{n-1}\frac{\eta^{2k}}{\varphi(n-k)},\; n\in\N.
\]
Now, using the fact that $\varphi$ is non-decreasing, we have
\begin{align*}
u_n\leq \eta^{2n} u_0+C\sum_{k=0}^{\lfloor \frac{\smalltext{n}\smalltext{-}\smalltext{1}}{\smalltext{2}} \rfloor}\frac{\eta^{2k}}{\varphi(n-k)}+C\sum_{k=\lfloor \frac{\smalltext{n}\smalltext{-}\smalltext{1}}{\smalltext{2}} \rfloor+1}^{n-1}\frac{\eta^{2k}}{\varphi(n-k)}\leq \eta^{2n} u_0+\frac{C}{(1-\eta^2)\varphi(n-\lfloor \frac{n-1}{2}\rfloor)}+\frac{C}{(1-\eta^2)\varphi(1)}\big(\eta^{2\lfloor \frac{\smalltext{n}\smalltext{-}\smalltext{1}}{\smalltext{2}}\rfloor+2}-\eta^{2n}\big).
\end{align*}
Given our assumption on the growth of $\varphi$, this gives the desired result using H\"older's inequality to move from a bound under $\Q$ to a bound under $\P$.
\end{proof}

\begin{proof}[Proof of \Cref{thm:Rate}]
	We can now finish the proof of \Cref{thm:Rate}.
	First of all, by construction and subsequent applications of \Cref{prop:Bellman}, it follows that for all $n\in \N$ we have $V^n = \widehat Y^n_0 = Y^n_0$.
	We have already argued that $V = Y_0$.
	Therefore, it follows by \Cref{prop:lowerbound} and \Cref{prop:upperbound} that $-\frac{C}{2^{n}}\le V^n- V \le \frac{C}{\varphi(n)}.$
\end{proof}

\subsection{Approximation of optimal controls}
\label{sec.pointwise.conv.optim.cont}
In this section we turn our attention to the approximation of an optimal control.
It follows directly from the proof of \Cref{thm:Rate}.
In fact, we have:
\begin{proposition}
\label{prop:Norm.conv}
	Let {\rm\Cref{assump:standing}} and {\rm\Cref{ass.z.bounded}} be satisfied.
	Then the stochastic optimal control problem \eqref{eq:weak.control} admits a solution $\nu^\star$, and for any $\eps\in(0,1]$, there is $C>0$ such that
	\begin{equation*}
		\E^\P\bigg[\bigg(\int_0^T\big\|u^\star(t, X_{\cdot\wedge t},Z^n_t) - \nu^\star_t\big\|^2\mathrm{d}t\bigg)^{1-\eps}\bigg]\le \frac{C}{2^{n}},\; 
			\forall n\in\N.
	\end{equation*}
\end{proposition}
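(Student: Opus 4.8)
The plan is to leverage the work already done in the proof of \Cref{thm:Rate}. Existence of an optimal control $\nu^\star$ for the weak problem \eqref{eq:weak.control} is immediate: by the Bellman optimality principle (\Cref{prop:Bellman}) the feedback control $\nu^\star_t = u^\star(t,X_{\cdot\wedge t},Z_t)$ given in \eqref{eq:charac.nustar} is optimal, where $(Y,Z)$ solves the BSDE \eqref{bsde}. So the content of the statement is the convergence rate, and the natural route is to compare $u^\star(t,X_{\cdot\wedge t},Z^n_t)$ with $u^\star(t,X_{\cdot\wedge t},Z_t)$ directly through the Lipschitz-continuity of $u^\star$ in its last argument (\Cref{assump:standing}.$(v)$). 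Concretely, with $\ell_{u^\star}$ the Lipschitz constant of $u^\star$ in $z$, one has pointwise $\|u^\star(t,X_{\cdot\wedge t},Z^n_t)-\nu^\star_t\|^2 \le \ell_{u^\star}^2\|Z^n_t-Z_t\|^2$, hence
\[
\E^\P\bigg[\bigg(\int_0^T\|u^\star(t,X_{\cdot\wedge t},Z^n_t)-\nu^\star_t\|^2\mathrm{d}t\bigg)^{1-\eps}\bigg] \le \ell_{u^\star}^{2(1-\eps)}\,\E^\P\bigg[\bigg(\int_0^T\|Z^n_t-Z_t\|^2\mathrm{d}t\bigg)^{1-\eps}\bigg].
\]

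It then remains to control the right-hand side. This is exactly what \Cref{prop:lowerbound} delivers: for every $(\eta,\eps)\in(0,1)^2$ there is $C>0$ with $\E^\P[(\int_0^T\|Z^n_s-Z_s\|^2\mathrm{d}s)^{1-\eps}]\le C\eta^n$. Choosing $\eta = 1/2$ (and noting that, as in the proof of \Cref{thm:Rate}, any choice of super-exponentially growing $\varphi$ works, or more precisely that the estimate of \Cref{prop:lowerbound} can be invoked with $\eta<1$ arbitrary) yields the bound $C/2^n$. Combining the two displays gives the claimed estimate, after relabelling the constant $C$.

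The only subtlety — and the step I would treat with some care rather than the routine Lipschitz bound — is the passage from the $\Q$-expectation appearing inside \Cref{prop:lowerbound} to the $\P$-expectation and the interplay with the exponent $1-\eps$: one must make sure the Hölder argument used at the end of \Cref{prop:lowerbound} already produces a bound under $\P$ with the exponent $1-\eps$ on $\int_0^T\|Z^n_s-Z_s\|^2\mathrm{d}s$, which it does since $\E^\P[\exp(p\sup_t|Y^n_t|)]$ and the moments of $\int_0^T\|Z^n_s\|^2\mathrm{d}s$ are uniformly bounded by \Cref{lemma:unif}, so that all the relevant Radon--Nikodym densities have moments of every order. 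Provided \Cref{prop:lowerbound} is stated (as it is) directly under $\P$, there is essentially nothing left to do beyond the Lipschitz estimate above. I do not anticipate a genuine obstacle here; the proposition is a corollary of \Cref{prop:lowerbound} together with \Cref{assump:standing}.$(v)$.
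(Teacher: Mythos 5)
Your proposal is correct and follows essentially the same route as the paper: identify $\nu^\star_t = u^\star(t,X_{\cdot\wedge t},Z_t)$ via \eqref{eq:charac.nustar}, apply the Lipschitz continuity of $u^\star$ in $z$ from \Cref{assump:standing}.$(v)$, and conclude with \Cref{prop:lowerbound} (which is already stated under $\P$, so your cautionary remark about the change of measure is indeed moot).
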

\begin{proof}
	With the notation of the proof of \Cref{thm:Rate}, by \Cref{eq:charac.nustar}, boundedness of $\cU$ and \Cref{assump:standing}.$(v)$, we have
	\begin{align*}
		\E^\P\bigg[\bigg(\int_0^T\big\|u^\star(t, X_{\cdot\wedge t},Z^n_t) - \nu^\star_t\big\|^2\mathrm{d}t\bigg)^{1-\eps}\bigg] &\le C\E^\P\bigg[\bigg(\int_0^T\big\|u^\star(t, X_{\cdot\wedge t},Z^n_t) - u^\star(t, X_{\cdot\wedge t}, Z_t)\big\|^2\mathrm{d}t\bigg)^{1-\eps}\bigg]\\
		&\le C\E^\P\bigg[\bigg(\int_0^T\|Z^n_t - Z_t\|^2\mathrm{d}t\bigg)^{1-\eps}\bigg].
	\end{align*}
	We conclude using \Cref{prop:lowerbound}.
\end{proof}
It is noteworthy that the rate obtained in \Cref{prop:Norm.conv} does not assume uniqueness of the optimal control $\nu^\star$ of Problem \eqref{eq:weak.control}.
As observed in the proof, the main idea of this result is that the approximating controls $ \nu^{\star,n}_t\coloneqq u^\star(t, X_{\cdot\wedge t},Z^n_t)$ are constructed based on the choice of the targeted optimal control $\nu^{\star}$.
In other terms, there is a (unique) optimal control that we are approximating using the whole sequence $(\nu^{\star,n})_{n\in\N}$. 


\medskip

The rate on the approximation of the optimal control in \Cref{prop:Norm.conv} is given with respect to an $\mathbb{L}^2$-norm over the time argument.
When performing numerical simulations one might be interested in simulating the optimal control at a specific time $t$.
In this case, it would be interesting to assess the convergence rate at each time.
To this end, we will need to make stronger assumptions on the data.
\begin{assumption}\label{assumP:mall}
$(i)$ $h$ is Malliavin differentiable with a bounded Malliavin derivative such that in addition for any $i\in\{1,\dots,d\}$ and any $r\in[0,T]$, $D^i_rh(s,\x,z,u)$ is uniformly Lipschitz-continuous in $(z,u);$

\medskip
$(ii)$ $h$ is once continuously differentiable in $(z,u)$, with bounded derivatives which are also uniformly Lipschitz continuous in $(z,u);$

\medskip
$(iii)$ $u^\star$ is Malliavin differentiable with a bounded Malliavin derivative such that in addition for any $i\in\{1,\dots,d\}$ and any $r\in[0,T]$, $D^i_ru^\star(s,\x,z)$ is uniformly Lipschitz-continuous in $z;$

\medskip
$(iv)$ $u^\star$ is once continuously differentiable in $z$, with bounded derivatives which is also uniformly Lipschitz continuous in $z;$

\medskip
$(v)$ $G$ is bounded, Malliavin differentiable, with a bounded Malliavin derivative$;$

\medskip
$(vi)$ $L$ is uniformly bounded$;$

\end{assumption}
Under these assumptions, it can be checked, see e.g. \citeauthor*{mastrolia2014malliavin} \cite[Theorem 7.1]{mastrolia2014malliavin} that the solution $(Y,Z)$ of the BSDE \eqref{bsde} is Malliavin differentiable.
Since $X$ is Malliavin differentiable, it follows by regularity of $G$ and $L$ that $F^0(X)$ (recall \eqref{eq:BSDE0}).
Thus, by chain rule, $Y^0$ is Malliavin differentiable.
Then, it follows by \citeauthor*{pardoux1992backward} \cite[Lemma 2.3]{pardoux1992backward} that $Z^0$ is also Malliavin differentiable because it arises as the integrand in the martingale representation of a Malliavin differentiable martingale.
Using the same argument, we conclude by induction that for all $n\in \N$, the processes $(Y^n,Z^n)$ are Malliavin differentiable.
Let us differentiate both equations in the Malliavin sense. We have for any $i\in\{i,\dots d\}$, any $r\in[0,T]$ and any $n\in\N$
\begin{align*}
D_r^iY^n_t &= D_r^iG(X)+\int_t^T\Big(D_r^ih\big(s,X_{\cdot\wedge s},Z^n_s,u^\star(s,X_{\cdot\wedge s},Z_s^{n-1})\big)+\partial_zh\big(s,X_{\cdot\wedge s},Z^n_s,u^\star(s,X_{\cdot\wedge s},Z_s^{n-1})\big)D_r^iZ^n_s\Big)\mathrm{d}s\\
&\quad +\int_t^T\bigg(\partial_ah\big(s,X_{\cdot\wedge s},Z^n_s,u^\star(s,X_{\cdot\wedge s},Z_s^{n-1})\big)\big(D_r^iu^\star(s,X_{\cdot\wedge s},Z^{n-1}_s)+\partial _zu^\star(s,X_{\cdot\wedge s},Z^{n-1}_s)D_r^iZ^{n-1}_s\big)+\frac{Z^n_s\cdot D_r^iZ^n_s}{\varphi(n)}\bigg)\mathrm{d}s\\
&\quad-\int_t^TD^i_rZ^{n}_s\cdot\mathrm{d}B_s,\; t\in[0,T],
\end{align*}
and similarly
\begin{align*}
D_r^iY_t &= D_r^iG(X)+\int_t^T\Big(D_r^ih\big(s,X_{\cdot\wedge s},Z_s,u^\star(s,X_{\cdot\wedge s},Z_s)\big)+\partial_zh\big(s,X_{\cdot\wedge s},Z_s,u^\star(s,X_{\cdot\wedge s},Z_s)\big)D_r^iZ_s\Big)\mathrm{d}s\\
&\quad +\int_t^T\partial_ah\big(s,X_{\cdot\wedge s},Z_s,u^\star(s,X_{\cdot\wedge s},Z_s)\big)\big(D_r^iu^\star(s,X_{\cdot\wedge s},Z_s)+\partial _zu^\star(s,X_{\cdot\wedge s},Z_s)D_r^iZ_s\big)\mathrm{d}s-\int_t^TD^i_rZ_s\cdot\mathrm{d}B_s,\; t\in[0,T].
\end{align*}
Moreover, $Z^n$ and $Z$ are the respective traces of the Malliavin derivatives of $Y^n$ and $Z$.
That is, it holds
\begin{equation}
\label{eq:trace.DY}
	Z^{n,i}_t = D^i_tY^n_t,\; Z^i_t = D^i_tY_t\,\,\d\otimes \P\text{-a.s.,}
\end{equation}
where $Z^{n,i}$ is the $i^{\mathrm{th}}$ coordinate of $Z^n$.
We will also make the following condition:
\begin{assumption}
	\label{ass:DZ.bounded}
 $DZ$ is uniformly bounded$;$
\end{assumption}
Under Assumptions \ref{assumP:mall} and \ref{ass:DZ.bounded}, it is standard to prove the following.
\begin{lemma}\label{lemma:estimmall}
	Let {\rm Assumptions \ref{assumP:mall}} and {\rm\ref{ass:DZ.bounded}} hold. Then $DY$ is bounded, $Y^n$ and $DY^n$ are bounded uniformly in $n\in\N$, and $Z^n$ and $DZ^n$ are bounded in $\H^2_{\rm BMO}(\R^d,\F,\P)$, again uniformly in $n\in\N$. 
\end{lemma}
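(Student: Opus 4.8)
The plan is to chain together four boundedness assertions, each following by a standard BSDE estimate once the previous one is in place. The coefficients in the Malliavin-differentiated BSDEs written above are all linear in the unknowns $(D_r^iY^{(n)}, D_r^iZ^{(n)})$ with bounded coefficients, so the heart of the matter is to verify that the "free terms" are bounded and that the linearity does not degrade under the iteration in $n$.

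\emph{Step 1: $DY$ and $DY^n$ are bounded, uniformly in $n$.} Look at the BSDE for $D_r^iY_t$. Its driver has the form $D_r^ih(s,\cdot) + \partial_z h(s,\cdot) D_r^iZ_s + \partial_a h(s,\cdot)\big(D_r^iu^\star(s,\cdot) + \partial_zu^\star(s,\cdot) D_r^iZ_s\big)$, with terminal condition $D_r^iG(X)$. Under \Cref{assumP:mall}, $D_r^ih$, $D_r^iu^\star$, $D_r^iG(X)$ and the derivatives $\partial_z h,\partial_a h,\partial_zu^\star$ are all bounded, and $Z$ is bounded by \Cref{ass.z.bounded} while $DZ$ is bounded by \Cref{ass:DZ.bounded}; hence the whole driver is bounded by a constant, uniformly in $(r,\omega)$. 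A linear BSDE with bounded driver and bounded terminal value has a bounded $Y$-component (a direct comparison/Gronwall argument, or taking conditional expectations under the measure absorbing the linear terms), so $DY$ is bounded. For $DY^n$ the extra term $Z^n_s\cdot D_r^iZ^n_s/\varphi(n)$ appears; since $\varphi(n)\ge \varphi(0)=1$, and $Z^n$ is bounded in $\H^2_{\mathrm{BMO}}$ uniformly in $n$ by \Cref{lemma:unif} (the $\mathrm{BMO}$ bound following from the exponential moment bound on $Y^n$ via the standard quadratic-BSDE argument), we need $DZ^n$ bounded too — which is why Steps 1 and 2 must really be run together.

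\emph{Steps 2--4: bootstrap for $DZ^n$, $Z^n$ and then feed back.} The clean way is an induction on $n$. Assume $DZ^{n-1}$ is bounded (true for $n=0$ since $Z^{-1}=0$, so $D_r^iZ^{-1}=0$). Then in the $D_r^iY^n$ equation every term not involving $(D_r^iY^n,D_r^iZ^n)$ is bounded, so the equation is a linear BSDE in $(D_r^iY^n,D_r^iZ^n)$ with bounded free data; this gives $DY^n$ bounded and, crucially, $DZ^n$ bounded in $\H^2_{\mathrm{BMO}}$, with a constant that a priori could depend on the bound for $DZ^{n-1}$. To close the induction with an $n$-independent constant one uses the $1/\varphi(n)$ prefactor: writing $K_n$ for the $\H^2_{\mathrm{BMO}}$-bound of $DZ^n$, the estimate takes the shape $K_n\le C + \tfrac{C}{\varphi(n)}K_n$ (the quadratic term contributes $K_n$ on the right through $\|Z^n\|_{\mathrm{BMO}}\|DZ^n\|_{\mathrm{BMO}}/\varphi(n)$ after a BMO--energy estimate), which for $n$ large enough absorbs into the left side, and the finitely many remaining $n$ are bounded individually; alternatively one runs the argument on a short time interval and concatenates. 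The same BMO estimates applied to the original BSDEs \eqref{bsde} and \eqref{bsden} — linear-quadratic drivers with bounded/BMO data — give that $Z^n$ and $Z$ are bounded in $\H^2_{\mathrm{BMO}}$ uniformly in $n$, which is the last claim.

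\emph{Main obstacle.} The delicate point is precisely the uniformity in $n$ of the bound on $DZ^n$: the differentiated BSDE \eqref{bsden} contains the term $Z^n\cdot DZ^n/\varphi(n)$, which is quadratic-type and couples $DZ^n$ to itself, so a naive Gronwall bound would blow up with $n$ unless one exploits that its coefficient is $O(1/\varphi(n))$ together with the uniform BMO bound on $Z^n$ from \Cref{lemma:unif}. Handling this requires the BMO-martingale machinery (reverse Hölder, the John--Nirenberg inequality to turn BMO bounds into exponential integrability, energy inequalities for linear BSDEs with BMO coefficients), exactly as in the theory of quadratic BSDEs; everything else is routine linear BSDE estimation under the boundedness afforded by \Cref{assumP:mall} and \Cref{ass:DZ.bounded}.
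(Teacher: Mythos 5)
There is a genuine gap in the step that matters most, namely the uniformity in $n$. Your induction assumes $DZ^{n-1}$ is bounded and concludes only that $DZ^n$ is bounded in $\H^2_{\rm BMO}$ ``with a constant that a priori could depend on the bound for $DZ^{n-1}$'', and then you claim the recursion closes because ``the estimate takes the shape $K_n\le C+\frac{C}{\varphi(n)}K_n$''. This mislocates the difficulty. The self-coupling term $Z^n_s\cdot D_r^iZ^n_s/\varphi(n)$ is not the obstruction at all: since $\varphi(n)\ge 1$ and $Z^n$ is uniformly BMO, this term is linear in $D_r^iZ^n$ with a uniformly BMO coefficient and can simply be absorbed into the Girsanov drift $\psi^n_s=\partial_zh+Z^n_s/\varphi(n)$ (as the paper does right after the lemma); no ``$n$ large'' is needed for it. The genuine source of possible non-uniformity is the term $\partial_a h\,\partial_z u^\star\,D_r^iZ^{n-1}_s$, whose coefficient is $O(1)$ and carries no $1/\varphi(n)$ factor, and which also enters the bound on $\|D_rY^n\|_\infty$ through $\E^{\Q^n}[\int_t^T\|D_r^iZ^{n-1}_s\|\,\mathrm{d}s\,|\,\cF_t]$. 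What the linear-BSDE/energy estimates actually give is a recursion of the form $K_n\le A+B\,K_{n-1}$ with $B$ depending on $\|\partial_ah\|_\infty\|\partial_zu^\star\|_\infty$ and $T$, not on $\varphi(n)$; if $B\ge 1$ this can grow geometrically in $n$, so the induction as you wrote it does not close. (There is also a smaller mismatch: you assume $DZ^{n-1}$ pointwise bounded but only produce a BMO bound for $DZ^n$, so the hypothesis is not propagated; the induction should be run in BMO throughout.)

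The fix is the same device the paper uses in its subsequent difference estimate: work with the weight $\mathrm{e}^{\beta s}$, split the offending term by Young's inequality as $\frac{C}{\eta}\,\mathrm{e}^{\beta s}|D_r^iY^n_s|^2+\eta\,\mathrm{e}^{\beta s}\|D_r^iZ^{n-1}_s\|^2$ with $\eta$ as small as you like, absorb the first piece into the $\beta\int\mathrm{e}^{\beta s}|D_r^iY^n_s|^2\mathrm{d}s$ term by taking $\beta$ large (depending on $\eta$ but not on $n$), and take conditional expectations under $\Q^n$; one then gets $K_n^2\le C+\eta\,K_{n-1}^2$ in weighted BMO norms, where the norms under the various $\Q^n$ are mutually equivalent with a uniform constant because the drifts $\psi^n$ are uniformly BMO. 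Choosing $\eta$ below the reciprocal of that equivalence constant makes the recursion contractive and yields the uniform bounds on $DY^n$ and $DZ^n$ (an alternative is your short-time-interval-plus-concatenation argument, but it must be applied to this $DZ^{n-1}$ coupling, not to the $1/\varphi(n)$ term). Your Step 1 for $DY$ and your claims that $Y^n$ is uniformly bounded and $Z^n$ uniformly BMO are fine, though the latter follows most directly from boundedness of $G$ and $L$ in \Cref{assumP:mall} (giving $\|Y^n\|_\infty\le C$ and then the standard quadratic-BSDE BMO estimate) rather than from the exponential moments of \Cref{lemma:unif}.
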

Using Assumptions \ref{assumP:mall} and \ref{ass:DZ.bounded}, and in particular that $DZ$, $Dh$ and $Du^\star$ are bounded, and that all the functions are Lipschitz-continuous, we get that there is some constant $C$ independent of $n$ such that
\begin{align*}
&|D_r^iY^n_t-D_r^iY_t|^2+\beta\int_t^T\mathrm{e}^{\beta s}|D_r^iY^n_s-D_r^iY_s|^2\mathrm{d}s+\int_t^T\mathrm{e}^{\beta s}\|D_r^iZ^n_s-D_r^iZ_s\|^2\mathrm{d}s\\
&\leq 2C\int_t^T\mathrm{e}^{\beta s}|D_r^iY^n_s-D_r^iY_s|\bigg(\|Z^{n-1}_s-Z_s\|+\|Z_s^n-Z_s\|+\|D^i_rZ^{n-1}_s-D^i_rZ_s\|+\frac{Z^n_s\cdot D_r^iZ_s}{\varphi(n)}\bigg)\mathrm{d}s\\
&\quad-2\int_t^T\mathrm{e}^{\beta s}(D_r^iY^n_s-D_r^iY_s)(D_r^iZ^n_s-D_r^iZ_s)\cdot\bigg(\mathrm{d}B_s-\underbrace{\bigg(\partial_zh\big(s,X_{\cdot\wedge s},Z_s^n,u^\star(s,X_{\cdot\wedge s},Z_s^{n-1})\big)+\frac{Z^n_s}{\varphi(n)}\bigg)}_{\eqqcolon \psi^\smalltext{n}_\smalltext{s}}\mathrm{d}s\bigg).
\end{align*}
Since by assumption the process $\psi^n$ is bounded, uniformly in $n\in\N$, in the space $\H^2_{\rm BMO}(\R^d,\F,\P)$, we can use it to define a probability measure $\Q$, equivalent to $\P$ and with finite second moment such that, using \Cref{lemma:estimmall} to ensure the required integrability properties, and taking $\beta$ large enough (independently of $n\in\N$)
\begin{align*}
&\E^\Q\big[|D_r^iY^n_t-D_r^iY_t|^2\big] +\E^\Q\bigg[\int_0^T\mathrm{e}^{\beta s}\|D_r^iZ^n_s-D_r^iZ_s\|^2\mathrm{d}s\bigg]\leq \frac{C}{\varphi(n)}+\frac{C}{2^n}+\eta\E^\Q\bigg[\int_0^T\|D^i_rZ^{n-1}_s-D^i_rZ_s\|^2\mathrm{d}s\bigg].
\end{align*}
Using \eqref{eq:trace.DY} and arguing as at the end of the proof of \Cref{prop:upperbound}, we can then easily proceed as above to deduce that for any $n\in\N$ and any $t\in[0,T]$
\begin{align}
\label{eq:bound.z.pointwise}
&\E^\Q\big[\|Z^n_t-Z_t\|^2\big]\leq \frac{C}{2^n}.
\end{align}
This allows to deduce the following result.
\begin{proposition}
\label{prop:pointwise.conv}
	Let {\rm Assumptions  \ref{assump:standing}}, {\rm\ref{ass.z.bounded}}, {\rm\ref{assumP:mall}} and {\rm\ref{ass:DZ.bounded}} be satisfied
	and let $\nu^\star$ be an optimal control for {\rm Problem \eqref{eq:weak.control}}.
	Then there exists a function $u^\star$ 
	satisfying \eqref{eq:argmax.ass} and
	such that, defining the scheme $(V^n,\widehat Y^n,Z^n,\alpha^n)_{n\in\N}$ through $u^\star$ and any $\varphi$, we have for some $C>0$ and almost every $t\in [0,T]$
	\begin{equation*}
		\E\big[ \big\|u^\star(t, X_{\cdot\wedge t},Z^n_t) - \nu^\star_t\big\| \big] \le \frac{C}{2^n}, \,\, n\in \N.
	\end{equation*}
\end{proposition}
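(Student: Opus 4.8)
The plan is to read off the statement from the pointwise bound \eqref{eq:bound.z.pointwise} and the Lipschitz-continuity of $u^\star$, essentially as in the proof of \Cref{prop:Norm.conv}. First I would fix the selector $u^\star$: by \Cref{prop:Bellman} an optimal control $\nu^\star$ of \eqref{eq:weak.control} maximises the reduced Hamiltonian along $(X,Z)$, so $\nu^\star_t\in\argmax_{u\in U}h(t,X_{\cdot\wedge t},Z_t,u)$ for $\d t\otimes\P$--a.e. $(t,\omega)$, and a measurable selection argument furnishes a Borel map $u^\star$ satisfying \eqref{eq:argmax.ass} with $\nu^\star_t=u^\star(t,X_{\cdot\wedge t},Z_t)$, $\d t\otimes\P$--a.e.\ (this is the ``targeting'' of $\nu^\star$ already used after \Cref{prop:Norm.conv}). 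Running the scheme with this $u^\star$, \Cref{assumP:mall}.$(iv)$ (or \Cref{assump:standing}.$(v)$) gives, $\d t\otimes\P$--a.e.,
\[
\big\|u^\star(t,X_{\cdot\wedge t},Z^n_t)-\nu^\star_t\big\| \le \ell_{u^\smalltext{\star}}\,\big\|Z^n_t-Z_t\big\|,
\]
so the claim reduces to showing $\E^\P[\|Z^n_t-Z_t\|]\le C/2^n$ for almost every $t\in[0,T]$.

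For this I would use the estimates established just before the statement. By \Cref{lemma:estimmall}, together with the trace identities \eqref{eq:trace.DY}, the processes $Z^n_t=D_tY^n_t$ and $Z_t=D_tY_t$ are bounded by a constant $M$ uniformly in $n$ and $t$, and the Grönwall-type recursion yielding \eqref{eq:bound.z.pointwise} can be run with an arbitrarily fast geometric rate — its inhomogeneous terms being $C/\varphi(n)$ (super-exponentially small) and a geometrically small contribution inherited from \Cref{prop:lowerbound}, and the contraction factor $\eta$ being free — so that for a suitable choice of parameters $\E^{\Q}[\|Z^n_t-Z_t\|^2]\le C/4^n$ for all $t$, where $\Q$ is the ($n$-dependent) measure with density $\cE\big(\int_0^\cdot\psi^n_s\cdot\d B_s\big)_T$ and $\psi^n$ is bounded in $\H^2_{\rm BMO}(\R^d,\F,\P)$ uniformly in $n$. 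Since the reverse Hölder inequality for exponentials of uniformly--BMO martingales provides $\delta>0$ and $\kappa<\infty$ independent of $n$ with $\E^{\Q}[(\d\P/\d\Q)^{1+\delta}]\le\kappa$, Hölder's inequality and the bound $\|Z^n_t-Z_t\|^{2(1+\delta)/\delta}\le M^{2/\delta}\|Z^n_t-Z_t\|^2$ yield
\[
\E^\P\big[\|Z^n_t-Z_t\|^2\big]=\E^{\Q}\bigg[\frac{\d\P}{\d\Q}\,\|Z^n_t-Z_t\|^2\bigg]\le \kappa^{\frac1{1+\delta}}M^{\frac2{1+\delta}}\,\big(\E^{\Q}[\|Z^n_t-Z_t\|^2]\big)^{\frac{\delta}{1+\delta}},
\]
which, after once more invoking the freedom in the geometric rate to bury the fixed power loss $\delta/(1+\delta)<1$ and then applying Jensen's inequality, gives $\E^\P[\|Z^n_t-Z_t\|]\le C/2^n$ for almost every $t$. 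Plugging this into the Lipschitz bound above closes the argument.

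I expect the only genuinely delicate point to be this measure-change step: one must check that $\delta$ and $\kappa$ in the reverse Hölder estimate for $\d\P/\d\Q$ stay bounded as $n\to\infty$, which is exactly why \Cref{lemma:estimmall} is invoked to control $\psi^n$ in $\H^2_{\rm BMO}$ uniformly in $n$; once that is in place, the freedom in the geometric rate of \eqref{eq:bound.z.pointwise} makes the passage from $\Q$ to $\P$ harmless and delivers the rate $2^{-n}$. Everything else is a routine assembly of \eqref{eq:charac.nustar}, the Lipschitz bound for $u^\star$, \Cref{lemma:estimmall} and \eqref{eq:bound.z.pointwise}.
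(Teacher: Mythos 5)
Your proposal is correct and follows essentially the same route as the paper, whose proof of this proposition is literally the one-liner ``the claim follows from \eqref{eq:bound.z.pointwise} and Lipschitz continuity of $u^\star$'', with the selector $u^\star$ fixed via \eqref{eq:charac.nustar} exactly as you do. The extra care you take---the reverse-H\"older passage from the $n$-dependent measure $\Q$ back to $\P$, and using the free geometric rate (both the $\eta$ in the recursion and the rate inherited from \Cref{prop:lowerbound}) to absorb the square-root loss when going from the $L^2$ bound \eqref{eq:bound.z.pointwise} to the stated $L^1$ bound---is precisely the detail the paper leaves implicit, and it is handled correctly.
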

\begin{proof}
	The proof follows from \eqref{eq:bound.z.pointwise} and Lipschitz continuity of $u^\star$.
\end{proof}
\subsection{Proofs for subsection \ref{sec:comparison}}
\begin{proof}[Proof of \Cref{prop:entropic.penal}]
	First observe that for every $\nu\in \cU$ we have
	\begin{equation*}
		\cH(\P^\nu|\P) = \frac12\E^{\P^\smalltext{\nu}}\bigg[\int_0^T\|\nu_s\|^2\d s\bigg],
	\end{equation*}
	so that $\widetilde V^n$ can be re-written as
	\begin{equation}
	\label{eq:control.problem.tilde.nn}
		\widetilde V^n \coloneqq  \sup_{\alpha \in \cA} \E^{\PP^{\smalltext\alpha}}\bigg[G(X^n) - \int_0^T\frac{\varphi(n)}{2}\|\nu_s\|^2+ L\big(s,X^n_{\cdot\wedge s}, u^\star(s, X^n_{\cdot\wedge s},v^{n-1}_s(X^n_{\cdot\wedge s}))\big)\d s \bigg].
	\end{equation}
	It follows by \Cref{prop:Bellman2} that the optimal control problem \eqref{eq:control.problem.tilde.nn} can be characterised by a BSDE in the sense that $\widetilde V^n = \widetilde Y^n_0$ where $(\widetilde Y^n,\widetilde Z^n)$ is the solution of the BSDE
	\begin{equation*}
		\begin{cases}
		\displaystyle \widetilde Y^n_t = F^n(X^n) + \int_t^T\frac{1}{2\varphi(n)}\big\|\widetilde Z^n_s\big\|^2 \d s  - \int_t^T\widetilde Z^n_s\cdot\d B_s,\; t\in[0,T],\; \P\text{\rm--a.s.},\\
		\displaystyle X^n_t =\xi+ \int_0^t\sigma(s,X^n_{\cdot\wedge s})b\big(s,X^n_{\cdot\wedge s},u^\star(s, X^n_{\cdot\wedge s}, v^{n-1}_s(X_{\cdot\wedge s}^n))\big)\d s + \int_0^t\sigma(s,X^n_{\cdot\wedge s})\d B_s,\; t\in[0,T],\; \P\text{\rm--a.s.}
		\end{cases}
	\end{equation*}
	Thus, we have that
	\begin{equation*}
		\widetilde V^n = \varphi(n)\log\bigg( \E^\P\bigg[\exp\bigg(\frac{1}{ \varphi(n)}\bigg(G(X^n) + \int_0^TL(s,X^n_{\cdot\wedge s},  u^\star(s, X^n_{\cdot\wedge s},v^{n-1}_s(X^n_{\cdot\wedge s})))\d s\bigg) \bigg) \bigg] \bigg).
	\end{equation*}
	Since $(\widehat Y^n,Z^n)$ solves BSDE \eqref{eq:BSDE.n}, it follows by Girsanov's theorem that $(X,\widehat Y^n,Z^n)$ satisfies
	\begin{equation*}
		\begin{cases}
		\displaystyle \widehat Y^n_t = F^n(X) + \int_t^T\frac{1}{2\varphi(n)}\big\|Z^n_s\big\|^2 \d s  - \int_t^T Z^n_s\cdot\d B^n_s\; \P^n\text{\rm--a.s.}\\
		\displaystyle X_t =\xi+ \int_0^t\sigma(s,X_{\cdot\wedge s})b\big(s,X_{\cdot\wedge s},u^\star(s, X_{\cdot\wedge s}, Z^{n-1}_s)\big)\d s + \int_0^t\sigma(s,X_{\cdot\wedge s})\d B^n_s,\; t\in[0,T],\; \P^n\text{\rm--a.s.}
		\end{cases}
	\end{equation*}
	with $B^n\coloneqq B -\int_0^tb\big(s,X_{\cdot\wedge s},u^\star(s, X_{\cdot\wedge s}, Z^{n-1}_s)\big)\d s$ and $\P^n$ given in \eqref{eq:def.Pn}.
	Thus, by uniqueness of $(\widetilde Y^n,\widetilde Z^n)$ and of the SDE \eqref{eq:Non.Markov.SDE},
	it follows that $\P^n\circ (X,\widehat Y^n, Z^n)^{-1} = \P\circ (X^n, \widetilde Y^n,\widetilde Z^n)^{-1}$ (recall $Z^{n-1}_t =v_t^{n-1}(X_{\cdot\wedge t})$).
	This implies that
	\begin{equation*}
		\widetilde V^n = \varphi(n)\log\bigg( \E^{\P^n}\bigg[\exp\bigg(\frac{1}{\varphi(n)}\bigg(G(X) + \int_0^TL(s,X_{\cdot\wedge s},  u^\star(s, X_{\cdot\wedge s}, Z^{n-1}_s))\d s\bigg)\bigg) \bigg] \bigg) = V^n.
	\end{equation*}
\end{proof}

\begin{proof}[Proof of \Cref{cor:HJB}]
	We already know that $v^n(s, X_s) = \widehat Y^n_s$ and $(\sigma\nabla v^n)(s, X_s) = Z^n_s$, and by \cite[Theorem 3.1]{ma2002representation} again, we have $v(s,X_s) = Y_s$ and $(\sigma\nabla v)(s,X_s) = Z_s$ for all $s\in [0,T]$.
	By the Markovian property of the driftless SDE \eqref{eq:driftless}, if we start $X$ at time $t$ from $x\in \R^m$, then we have $Y^n_t = v^n(t,x)$ as well as $Y_t = v(t, x)$.
	Moreover, by boundedness of $\sigma$ it follows that $Z$ is bounded.
	Thus, the result follows from \Cref{thm:Rate}. Now, if $\nabla v$ is also Lipschitz continuous, the SDE \eqref{eq:Markov.SDE.strong} has a Lipschitz-continuous drift and  diffusion coefficient. 
	Therefore, it admits a unique strong solution.
	Thus, the optimal control $\nu^\star_t\coloneqq  u^\star(t, X_{ t}^{\nu^\smalltext{\star}}, (\sigma^\top\nabla v)(t, X_{t}^{\nu^\smalltext{\star}}))$ for Problem \eqref{eq:weak.control} gives rise to an optimal control $\alpha^{\star}$ for problem \ref{eq:OG.problem}.
	In fact, 
	we have $\PP\circ (X^{\alpha^\smalltext{\star}})^{-1} =\PP^{\nu^\smalltext{\star}}\circ X^{-1}$, which yields
	\begin{align*}
		V = V_w
 	    &= \E^{\PP^{\smalltext{\nu}^\tinytext{\star}}}\bigg[G(X) - \int_0^TL\big(s, X_{\cdot\wedge s}, \nu^\star_s(X_{\cdot\wedge s})\big)\d s \bigg] = \E^\P\bigg[G(X^{\alpha^\smalltext{\star}}) - \int_0^TL\big(s, X^{\alpha^\smalltext{\star}}_{\cdot\wedge s}, \alpha^\star_t\big)\d s \bigg],
	\end{align*}
	which shows that $\alpha^\star$ is optimal. Similarly, replacing $v$ by $v^n$ in the SDE \eqref{eq:Markov.SDE.strong} allows to obtain a solution $X^n$. 
	In fact, by \cite[Lemma 2.2]{ma2024convergence} (and a direct induction) the function $\nabla v^n$ is also bounded and Lipschitz continuous.
	Standard SDE estimates yield
	\begin{equation*}
		\E^\P\bigg[\sup_{t\in [0,T]}\|X^n_t - X_t^{\alpha^\smalltext{\star}} \|^2\bigg] \le C\E^{\P}\bigg[\int_0^T\|(\sigma\nabla v^n)(t,X^n_t) - (\sigma\nabla v)(t, X_t^{\nu^\smalltext{\star}} )\|^2\d t \bigg].
	\end{equation*} 	
	Let $\P^n$ define the probability measure introduced in \eqref{eq:weak.control}, with $\nu$ therein replaced by $\nu^n\coloneqq  u^\star(t, X_t, (\sigma\nabla v^n)(t, X_t))$.
	Then, we have that $\P^n\circ X^{-1} = \P\circ (X^n)^{-1}$.
	Using Cauchy--Schwarz's inequality as well as boundedness of $b$, we have
	\begin{align*}
		\E^\P\bigg[\sup_{t\in [0,T]}\|X^n_t - X_t^{\alpha^\smalltext{\star}} \|\bigg] &\le \E^{\P^\smalltext{n}}\bigg[\bigg(\frac{\d \P^n}{\d \P} \bigg)^{-1}\int_0^T\|(\sigma\nabla v^n)(t, X_t^n) - (\sigma\nabla v)(t, X^n_t) \|\d t \bigg]\\
			& \quad  +\E^{\P}\bigg[\int_0^T\|(\sigma\nabla v)(t, X_t^n) - (\sigma \nabla v)(t, X_t^{\alpha^\smalltext{\star}} ) \|\d t \bigg]\\
		&\le C \E^{\P}\bigg[\int_0^T\|(\sigma\nabla v^n)(t, X_t) - (\sigma\nabla v)(t, X_t) \|^2\d t \bigg]^{1/2} + C\E^{\P}\bigg[\int_0^T\| X_t^n -  X_t^{\alpha^\smalltext{\star}} \| \d t \bigg] .
	\end{align*}
	By Gr\"onwall's inequality we thus have
	\begin{align*}
		\E^\P\bigg[\sup_{t\in [0,T]}\|X^n_t - X_t^{\alpha^\smalltext{\star}} \| \bigg] &\le C\E^{\P}\bigg[\int_0^T\|Z^n_t - Z_t\|^2\d t\bigg]^{1/2}\le \frac{C}{2^\frac{n}{2}}
	\end{align*}
	where the latter inequality follows by \Cref{prop:lowerbound}. Now, as above, using the triangular inequality, a Girsanov change of measure, as well as boundedness and Lipschitz-continuity of the functions $\sigma$ and $\nabla v$ yields
	\begin{align*}
		\E^{\P}\bigg[\int_0^T\|\nu^{\star,n}_t - \nu^\star_t\|dt\bigg] &\le C\E^{\P}\bigg[\int_0^T\|X^n_t - X_t^{\alpha^\smalltext{\star}} \|\d t\bigg] + C\E^{\P}\bigg[\int_0^T\|Z^n_t - Z_t\|^2\d t\bigg]^{1/2}.
	\end{align*}
\end{proof}

\subsection{Proof of Theorem \ref{thm:Rate.vol}}
\label{sec:controlled.diffusion}

We now turn to the proof of the convergence of the approximation scheme in the controlled volatility case.
Thus in this section, we assume that the function $\sigma$ may further depend on the control process.
In this case, we assume that $\Omega = \cC_m$, the probability measure $\PP$ is the Wiener measure and $X$ is the canonical process.

\medskip
Since for each $\nu \in \Uc$ the {\rm SDE} \eqref{eq:driftless} admits a unique weak solution $\PP^\nu$,
 (after possibly extending the probability space) for every $\nu\in \Uc$, there exists a $\P^\nu$--Brownian motion $B^\nu$ such that
\begin{equation*}
 	X_t  = X_0 + \int_0^t\sigma(s, X_{\cdot\wedge s}, \nu_s)\d B^\nu_s.
 \end{equation*} 
 For each $\nu\in \Uc$, define
 \begin{equation*}
 	\frac{\d\Q^\nu}{\d\PP^\nu} = \cE\bigg(\int_0^Tb(s, X_{\cdot\wedge s}, \nu_s)\d B^\nu_s \bigg).
 \end{equation*}
 Thus, under $\Q^\nu$, the process $X$ satisfies $\d X_t = \sigma(t, X_{\cdot\wedge t}, \nu_t)b(t, X_{\cdot\wedge t}, \nu_t)\d t + \sigma(t, X_{\cdot\wedge t}, \nu_t)\d B^{\Q^\smalltext{\nu}}_t,$ with $B^{\Q^\smalltext{\nu}}_t\coloneqq  B^\nu_t - \int_0^tb(s, X_{\cdot\wedge s}, \nu_s)\d s$.
Consider the control problem
\begin{equation}
\label{eq:vol.control}
	V_w \coloneqq  \sup_{\nu\in \Uc}\E^{\Q^\smalltext{\nu}}\bigg[G(X) - \int_0^TL(s,X_{\cdot\wedge s}, \nu_s)\d s\bigg].
\end{equation}
It follows by \citeauthor*{karoui2013capacities2} \cite[Theorem 4.5]{karoui2013capacities2} that $V = V_w$. The following result plays a key role in our analysis.
It is a version of \Cref{prop:Bellman} for the controlled volatility case.
Therein, we use the unique process
 $\widehat \sigma_s$ such that
\begin{equation*}
	\widehat \sigma_t^2 = \frac{\d[X,X]_t}{\d t},\; \PP\text{\rm--a.s. for all }\PP\in \cP(\cC_m)
\end{equation*}
where $\cP(\cC_m)$ denotes the set of Borel probability measures on $\cC_m$.
The existence of $\widehat\sigma$ is guaranteed by {\rm \citeauthor*{karandikar1995pathwise} \cite{karandikar1995pathwise}}.
\begin{proposition}
\label{prop.Bellman.vol.cont}
	Let the assumptions of {\rm \Cref{thm:Rate.vol}} be satisfied.
	Then, problem \eqref{eq:vol.control} admits an optimal control $\nu^\star$ which satisfies $\nu_t^\star = u^\star(t, X_{\cdot\wedge t}, Z_t,\hat\sigma_t)$ where $(Y,Z)$ solves the {\rm BSDE}
	\begin{equation*}
		Y_t = G(X) - \int_t^TH(s, X_{\cdot\wedge s}, Z_s, \widehat \sigma_s)\d s - \int_t^TZ_s\cdot \d B_s^\star,\;  \PP^\star\text{\rm--a.s.},
	\end{equation*}
	for some probability measure $\PP^\star \coloneqq  \P^{\nu^\ast}$, a $\PP^\star$--Brownian motion $B^\star$ and with
	\begin{equation*}
		u^\star(t,X_{\cdot\wedge t},Z_t,\hat\sigma_t) \in  \underset{\{u\in U: \sigma\sigma^{\smalltext{\top}}(t,X_{\cdot\wedge t},u)=\hat\sigma\}}{\mathrm{argmax}}\big\{ \sigma(t,X_{\cdot\wedge t},u)b(t,X_{\cdot\wedge t},u)\cdot Z_t - L(t, x_{\cdot\wedge t},u)\big\}.
	\end{equation*}
	Moreover, it holds $V_w = Y_0$.
\end{proposition}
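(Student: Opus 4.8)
The statement is the Bellman (martingale) optimality principle for the volatility-controlled problem \eqref{eq:vol.control}, and my plan is to obtain it from the theory of second-order {\rm BSDE}s, in the spirit of \cite{soner2012wellposedness, cvitanic2018dynamic, possamai2018stochastic, zhang2017backward}. Since $V = V_w$ by \cite[Theorem 4.5]{karoui2013capacities2}, it is enough to analyse \eqref{eq:vol.control}. First I would record the structural facts that make the machinery applicable: under \Cref{Ass:Control.Vol} together with \Cref{assump:standing}.$(i)$--$(ii)$, the map $z\longmapsto H(t,\x,z,\Sigma)$ is, for each $(t,\x,\Sigma)$, Lipschitz-continuous uniformly in the remaining arguments, being a supremum of affine functions of $z$ with slopes bounded by $\|\sigma\|_\infty\|b\|_\infty$; the function $G$ is of sub-quadratic growth; and by (the argument of) \Cref{lem:estimX} the canonical process carries exponential moments of every order, quasi-surely. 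I would also fix the non-dominated family $\cP$ of Borel probability measures on $\cC_m$ under which $X$ is a local martingale with $\mathrm{d}[X,X]_t/\mathrm{d}t = \widehat\sigma_t^2 \in \{\sigma\sigma^\top(t, X_{\cdot\wedge t}, u) : u\in U\}$, $\mathrm{d}t$--a.e.; note that $\cP$ contains every weak-solution measure $\PP^\nu$, $\nu\in\Uc$, introduced before the statement, the Girsanov drift $\sigma b$ being absorbed into the generator $H$ exactly as the drift term $b\cdot z$ is absorbed in \eqref{bsde}.

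With this in place, the core step is to invoke the well-posedness and the stochastic-control representation of the {\rm $2$BSDE}
\[
	Y_t = G(X) - \int_t^T H\big(s, X_{\cdot\wedge s}, Z_s, \widehat\sigma_s\big)\,\mathrm{d}s - \int_t^T Z_s\cdot\mathrm{d}X_s^{c} + K_T - K_t,\quad \cP\text{--q.s.},
\]
which admits a unique solution $(Y,Z,K)$ in the relevant spaces, with $K$ the minimal non-decreasing aggregating process, and for which one has $Y_0 = V_w$ together with the pathwise dynamic-programming identity for $Y$; this is precisely the duality between {\rm $2$BSDE}s and stochastic control, cf. \cite{cvitanic2018dynamic, possamai2018stochastic}. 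Next I would use the standing hypothesis of \Cref{thm:Rate.vol} that \eqref{eq:OG.problem} admits an optimal control $\nu^\star$ in feedback form: the associated measure $\PP^\star \coloneqq \PP^{\nu^\star}$ lies in $\cP$ and attains the supremum in the representation of $Y_0$, so that the optimality of $\PP^\star$ combined with the minimality of $K$ yields $K \equiv 0$, $\PP^\star$--a.s., while the within-level optimality forces $\nu^\star_s$ to realise the supremum defining $H(s, X_{\cdot\wedge s}, Z_s, \widehat\sigma_s)$, $\mathrm{d}s\otimes\PP^\star$--a.e. Consequently, under $\PP^\star$ the {\rm $2$BSDE} collapses to the standard {\rm BSDE} displayed in the statement, driven by the $\PP^\star$--Brownian motion $B^\star$ coming from $\mathrm{d}X_t = \widehat\sigma_t\,\mathrm{d}B^\star_t$ after the change of measure, with $\widehat\sigma_t^2 = \sigma\sigma^\top(t, X_{\cdot\wedge t}, \nu^\star_t)$, $\PP^\star$--a.s., and the measurable selector $u^\star$ of \Cref{Ass:Control.Vol}.$(ii)$ then gives $\nu^\star_t = u^\star(t, X_{\cdot\wedge t}, Z_t, \widehat\sigma_t)$ together with $V_w = Y_0$.

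The step I expect to be the main obstacle is this collapse along the optimal measure combined with the measurable selection: one has to show that the minimal increasing process of the {\rm $2$BSDE} vanishes precisely under the measure generated by the assumed optimal feedback control, and then perform a selection that \emph{simultaneously} optimises the volatility level $\Sigma = \widehat\sigma_t^2$---encoded in the choice of element of $\cP$---and the within-level control $u$ constrained by $\sigma\sigma^\top(t,\x,u) = \Sigma$. This is where the nested structure $\cH(t,\x,z,\gamma) = \sup_{\Sigma}\{\tfrac12\mathrm{Tr}[\Sigma\gamma] + H(t,\x,z,\Sigma)\}$ interacts with the aggregation of the Doob--Meyer decompositions of $Y$ under the various $\PP\in\cP$, and one must verify that $u^\star$ of \Cref{Ass:Control.Vol}.$(ii)$ is exactly the selector adapted to the level set $\{u : \sigma\sigma^\top(t,\x,u) = \widehat\sigma_t^2\}$. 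The remaining ingredients---well-posedness of the Lipschitz {\rm $2$BSDE}, the supermartingale property of $Y$ under every $\PP\in\cP$ and its martingale property under $\PP^\star$, and the integrability bookkeeping---are routine given the boundedness assumptions and the exponential estimate of \Cref{lem:estimX}.
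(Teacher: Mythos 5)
Your argument is essentially the paper's own: after transferring the assumed optimal feedback control to the weak formulation (a Girsanov step), the paper simply invokes \cite[Proposition 5.4]{cvitanic2018dynamic}, which is precisely the second-order BSDE representation and optimality characterisation (vanishing of the minimal non-decreasing process under the optimal measure, attainment of the Hamiltonian, and $V_w=Y_0$) that you sketch in detail. So the proposal is correct and follows the same route, merely unpacking the cited 2BSDE result rather than quoting it.
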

\begin{proof}
	Using Girsanov's theorem, it is direct to observe that the optimal control of \eqref{eq:OG.problem} given in the statement gives rise to an optimal control  $\nu^\star$ of \eqref{eq:weak.control}.
	The rest of the proof follows by \cite[Proposition 5.4]{cvitanic2018dynamic}.
\end{proof}
We can now finish the proof.
\begin{proof}[Proof of \Cref{thm:Rate.vol}]
Using the function $u^\star$ and on the Brownian probability space $(\Omega^\star,\cF^\star,\P^\star,B^\star)$ we consider the scheme $(V^n,X^n, Y^n, Z^n,\nu^n)_{n\in\N}$ given by \eqref{eq:problem.n.vol.control} and \eqref{eq:BSDE.n.vol}.
It follows by induction that for each $n$ the {\rm BSDE} \eqref{eq:BSDE.n.vol} admits a unique solution and by Assumption \ref{Ass:Control.Vol}$(ii)$-$(iii)$, the {\rm SDE} \eqref{eq:iterated.SDE.vol} admits a unique solution.
Moreover, applying \Cref{prop:Bellman} as in the proof of \Cref{thm:Rate} (but on the Brownian probability space $(\Omega^\star, \cF^\star,\P^\star,B^\star)$), we have that $V^n= Y^n_0$ satisfies \eqref{eq:Def.Yn.vol}.
Since $V= V_w = Y_0$, in order to conclude the proof it remains to estimate the rate of convergence of $Y^n$ to $Y$.
This would follows exactly as in the proof of \Cref{thm:Rate} if not for the fact that in the current case, the forward process in the iteration scheme depends on $n$ whereas it is fixed in the case of uncontrolled volatility. 
Repeating exactly the computations in the proof of \Cref{prop:lowerbound} and using the Lipschitz-continuity properties in \Cref{Ass:Control.Vol}.$(ii)$ we arrive at
\begin{align*}
(Y_0^n-Y_0)^2+\E^\Q\bigg[\int_0^T\mathrm{e}^{\beta s}\|Z_s^n-Z_s\|^2\mathrm{d}s\bigg] 
&\leq \ell_{G}^2\E^{\Q}\bigg[\sup_{t\in[0,T]}\|X^n_t-X_t\|^2\bigg] + \eta^2\E^\Q\bigg[\int_0^T\mathrm{e}^{\beta s}\|Z_s^{n-1}-Z_s\|^2\mathrm{d}s\bigg]+\frac{C}{\varphi(n)},
\end{align*}
for any $\eta>0$, with the measure $\Q$ introduced in the proof of \Cref{prop:lowerbound} and for some constant $\beta>0$ (possibly depending on $\eta$). Notice that
we have
\begin{equation}
\label{eq:X.by.Z}
	\E^{\P^\smalltext{\star}}\bigg[\sup_{t\in[0,T]}\|X^n_t-X_t\|^2\bigg]\leq 8\ell_{\sigma_u}^2\ell_{u^\smalltext{\star}_\x}^2\mathrm{e}^{ 8(\ell_{\smalltext{\sigma}_\smalltext{\x}}+\ell_{\smalltext{\sigma}_\smalltext{u}}\ell_{\smalltext{u}^\tinytext{\star}_\smalltext{\x}})^\smalltext{2}T}\E^{\P^\smalltext{\star}}\bigg[\int_0^T\|Z^{n-1}_s-Z_s\|^2\mathrm{d}s\bigg].
\end{equation}
In fact,
using Doob's inequality yields
\begin{align*}
\E^{\P^\smalltext{\star}}\bigg[\sup_{t\in[0,T]}\|X^n_t-X_t\|^2\bigg]&\leq 4\E^{\P^\smalltext{\star}}\bigg[\int_0^T\big\|\sigma\big(s,X^n_{\cdot\wedge s}, u^\star(s, X^n_{\cdot\wedge s}, Z^{n-1}_s)\big)-\sigma\big(s,X_{\cdot\wedge s}, u^\star(s, X_{\cdot\wedge s}, Z_s)\big)\big\|^2\mathrm{d}s\bigg]\\
&\leq 4\E^{\P^\smalltext{\star}}\bigg[\int_0^T\bigg(\big(\ell_{\sigma_\x}+\ell_{\sigma_u}\ell_{u^\smalltext{\star}_\x}\big)\sup_{s\in[0,t]}\|X^n_s-X_s\| +\ell_{\sigma_u}\ell_{u^\smalltext{\star}_z}\|Z^n_t-Z_t\|\bigg)^2\mathrm{d}t\bigg]\\
&\leq 8\big(\ell_{\sigma_\x}+\ell_{\sigma_u}\ell_{u^\smalltext{\star}_\x}\big)^2\E^{\P^\smalltext{\star}}\bigg[\int_0^T\sup_{s\in[0,t]}\|X^n_s-X_s\|^2\mathrm{d}t\bigg]+8\ell_{\sigma,u}^2\ell_{u^\smalltext{\star}_\x}^2\E^{\P^\smalltext{\star}}\bigg[\int_0^T\|Z^{n-1}_s-Z_s\|^2\mathrm{d}s\bigg].
\end{align*}
We can then conclude using Gr\"onwall's inequality.
\end{proof}

\begin{appendix}

\section{Bellman optimality principle}
We conclude the paper with this appendix where we recall a well-known result repeatedly used in the paper.
It relates a general non-Markovian stochastic optimal control problem in the weak formulation to solutions of BSDEs.
It is usually referred to in the literature as Bellman optimality principle or martingale optimality principle.
\begin{proposition}
\label{prop:Bellman}
	Let {\rm\Cref{assump:standing}} be satisfied.
	An admissible control $\nu \in \Uc$ is optimal for problem \eqref{eq:weak.control} if and only if it satisfies $\nu_t = u^\star(t, X_{\cdot\wedge t}, Z_t)$, $\d t\otimes\P$--{\rm a.e.}, where $u^\star$ is a Borel-measurable map satisfying
	\begin{equation}
	\label{eq:argmax.pro}
		u^\star(t,\x,z) \in \argmax_{u\in U}\big\{h(t, \x,z,u) \big\},\; (t,\x,z)\in[0,T]\times\Cc_m\times\R^d,
	\end{equation}
	and $(Y,Z)$ is the unique solution of {\rm BSDE}
	\begin{equation}
	\label{eq:BSDE.prop}
		Y_t = G(X) + \int_t^TH(s, X_{\cdot\wedge s}, Z_s)\d s - \int_s^TZ_s\cdot \d B_s, \; t\in [0,T],\; \PP\text{\rm--a.s.},
	\end{equation}
	with
	\begin{equation}\label{eq:bsdeinteggg}
	\E^\P\bigg[\sup_{t\in[0,T]}|Y_t|^2+\int_0^T\|Z_s\|^2\mathrm{d}s\bigg]<+\infty.
	\end{equation}
	Moreover, we have $V_w = Y_0$.
	\end{proposition}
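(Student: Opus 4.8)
The plan is to prove \Cref{prop:Bellman} by the classical martingale (Bellman) optimality principle. First I would check that {\rm BSDE} \eqref{eq:BSDE.prop} is well-posed with the integrability \eqref{eq:bsdeinteggg}. Writing $H(t,\x,z)=h(t,\x,z,u^\star(t,\x,z))$ with the measurable selector from \Cref{assump:standing}.$(v)$, the generator $(t,\omega,z)\longmapsto H(t,X_{\cdot\wedge t}(\omega),z)$ is Borel-measurable; since $H(t,\x,\cdot)$ is the supremum over $u\in U$ of the affine maps $z\longmapsto b(t,\x,u)\cdot z-L(t,\x,u)$ and $b$ is bounded, it is globally Lipschitz-continuous in $z$ with constant $\|b\|_\infty$. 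Moreover $|H(t,\x,0)|\le\sup_{u\in U}|L(t,\x,u)|\le C(1+\|\x\|_\infty^{2-\eps})$ and $|G(\x)|\le C(1+\|\x\|_\infty^{2-\eps})$, so \Cref{lem:estimX} puts both $G(X)$ and $\int_0^T|H(s,X_{\cdot\wedge s},0)|\mathrm{d}s$ in $\mathbb{L}^p(\P)$ for every $p\ge1$. Standard existence theory for Lipschitz {\rm BSDE}s then produces a unique $(Y,Z)$ with $\E^\P[\sup_{t\in[0,T]}|Y_t|^2+\int_0^T\|Z_s\|^2\mathrm{d}s]<+\infty$.

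Next comes the fundamental inequality. Fix $\nu\in\Uc$; since $b$ is bounded, the density $\mathrm{d}\P^\nu/\mathrm{d}\P=\cE(\int_0^\cdot b(s,X_{\cdot\wedge s},\nu_s)\cdot\mathrm{d}B_s)_T$ lies in every $\mathbb{L}^p(\P)$, Girsanov's theorem makes $B^\nu\coloneqq B-\int_0^\cdot b(s,X_{\cdot\wedge s},\nu_s)\mathrm{d}s$ a $\P^\nu$--Brownian motion, and (by H\"older's inequality together with the previous step) $Z\in\mathbb{H}^2(\P^\nu)$, so $\int_0^\cdot Z_s\cdot\mathrm{d}B^\nu_s$ is a true $\P^\nu$--martingale. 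Rewriting \eqref{eq:BSDE.prop} under $\P^\nu$,
\begin{equation*}
	Y_t=G(X)+\int_t^T\big(H(s,X_{\cdot\wedge s},Z_s)-b(s,X_{\cdot\wedge s},\nu_s)\cdot Z_s\big)\mathrm{d}s-\int_t^TZ_s\cdot\mathrm{d}B^\nu_s,
\end{equation*}
and since $H(s,\x,z)=\sup_{u\in U}h(s,\x,z,u)\ge b(s,\x,\nu_s)\cdot z-L(s,\x,\nu_s)$, the drift coefficient is $\ge-L(s,X_{\cdot\wedge s},\nu_s)$, with equality precisely when $\nu_s\in\argmax_{u\in U}h(s,X_{\cdot\wedge s},Z_s,u)$. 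Taking $\E^{\P^\nu}[\,\cdot\,|\,\Fc_t]$ yields
\begin{equation*}
	Y_t\ge\E^{\P^\nu}\bigg[G(X)-\int_t^TL(s,X_{\cdot\wedge s},\nu_s)\mathrm{d}s\,\bigg|\,\Fc_t\bigg],
\end{equation*}
so at $t=0$ we get $Y_0\ge\E^{\P^\nu}[G(X)-\int_0^TL(s,X_{\cdot\wedge s},\nu_s)\mathrm{d}s]$, and the supremum over $\nu\in\Uc$ gives $Y_0\ge V_w$; moreover, as the drift excess is nonnegative, equality holds at $t=0$ if and only if $\nu_s\in\argmax_{u\in U}h(s,X_{\cdot\wedge s},Z_s,u)$ for $\mathrm{d}s\otimes\P$--a.e.\ $(s,\omega)$ (using $\P^\nu\sim\P$).

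For attainment and the two implications: the feedback control $\nu^\star_t\coloneqq u^\star(t,X_{\cdot\wedge t},Z_t)$ is $\F$--progressively measurable and $U$--valued, hence $\nu^\star\in\Uc$, and its stochastic exponential is a uniformly integrable martingale by Novikov's criterion; since $\nu^\star$ realises the equality case, the previous step gives $Y_0=\E^{\P^{\nu^\star}}[G(X)-\int_0^TL(s,X_{\cdot\wedge s},\nu^\star_s)\mathrm{d}s]\le V_w$, whence $V_w=Y_0$ and $\nu^\star$ is optimal — this is the ``if'' direction. Conversely, if $\widehat\nu\in\Uc$ is optimal its value equals $V_w=Y_0$, so the equality case forces $\widehat\nu_s\in\argmax_{u\in U}h(s,X_{\cdot\wedge s},Z_s,u)$ for $\mathrm{d}s\otimes\P$--a.e.\ $(s,\omega)$, which is the asserted characterisation (when the argmax is not single-valued the sharp statement reads ``$\widehat\nu$ optimal $\iff$ $\widehat\nu_s$ lies in that argmax a.e.'', of which $\widehat\nu_t=u^\star(t,X_{\cdot\wedge t},Z_t)$ is one admissible representative).

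The step I expect to require the most care is the measure change: one must confirm that $\mathrm{d}\P^\nu/\mathrm{d}\P$ carries enough integrability — here provided by boundedness of $b$ — for the $\mathbb{L}^2$-estimates to transfer to $\P^\nu$, so that $\int Z\,\mathrm{d}B^\nu$ is genuinely a $\P^\nu$--martingale and the conditional-expectation argument is legitimate; the remaining arguments are routine.
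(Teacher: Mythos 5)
Your proposal is correct. Note that the paper does not actually prove \Cref{prop:Bellman}: it simply refers to \cite[Proposition 2.8]{possamai2021non}. Your argument is the standard martingale optimality principle that such a reference contains, and it is also exactly parallel to the paper's own appendix proof of \Cref{prop:Bellman2} (linear/comparison step after a bounded Girsanov change of measure, plus a measurable-selection argument for the converse), so nothing in your route is foreign to the paper. Two small points of bookkeeping. First, for the transfer of integrability under $\P^\nu$ you invoke H\"older, which requires $\int_0^T\|Z_s\|^2\mathrm{d}s$ (and likewise $\int_0^T|L(s,X_{\cdot\wedge s},\nu_s)|\mathrm{d}s$) to lie in $\mathbb{L}^p(\P)$ for some $p>1$, not merely in $\mathbb{L}^1(\P)$ as \eqref{eq:bsdeinteggg} states; this is indeed available because, as you observe, the data $G(X)$ and $\int_0^T|H(s,X_{\cdot\wedge s},0)|\mathrm{d}s$ are in every $\mathbb{L}^p(\P)$ by \Cref{lem:estimX}, so the $\mathbb{L}^p$-theory for Lipschitz BSDEs gives $(Y,Z)\in\cS^p\times\H^p$ for all $p$ --- it would be worth stating this explicitly rather than only the $\mathbb{L}^2$ bound. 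Second, your caveat about the multivalued argmax is apt: the equality case yields $\widehat\nu_s\in\argmax_{u\in U}h(s,X_{\cdot\wedge s},Z_s,u)$, $\mathrm{d}s\otimes\P$--a.e., which is the intended reading of the statement (with $u^\star$ existentially quantified among Borel selectors), and is precisely how the characterisation \eqref{eq:charac.nustar} is used later in the paper.
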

See for instance  \cite[Proposition 2.8]{possamai2021non} for a proof. The next result is similar but for the control problems appearing in the iteration algorithm. 
\begin{proposition}
\label{prop:Bellman2}
	Let {\rm\Cref{assump:standing}} be satisfied. For any $n\in\N$, we have $V^n=Y_0^n$ where the pair $(Y^n,Z^n)$ is the unique solution to the BSDE
\[
	Y^n_t = F^n(X) + \int_t^T\bigg(\frac{1}{2\varphi(n)}\|Z_t^n\|^2+ b\big(s,X_{\cdot\wedge s},u^\star(s, X_{\cdot\wedge s}, Z^{n-1}_s)\big)\cdot Z^n_s\bigg)\mathrm{d}s - \int_t^TZ_s^n\cdot \mathrm{d}B_s, \; t\in [0,T], \; \PP\text{\rm--a.s.},
\]
such that for any $p\geq 1$
\[
\E^\P\bigg[\exp\bigg(p\sup_{t\in[0,T]}|Y_t^n|\bigg)\bigg]+\E^\P\bigg[\bigg(\int_0^T\|Z_s^n\|^2\mathrm{d}s\bigg)^{\frac p2}\bigg]<+\infty.
\]
	\end{proposition}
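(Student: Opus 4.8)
The plan is to regard \eqref{eq:problem.n} as a linear--quadratic control problem over the drift $\alpha$ and to identify its value with $Y^n_0$ by a verification argument, the quadratic BSDE in the statement being exactly \eqref{eq:BSDE.nn}. First I would record well-posedness and the integrability bounds: the generator $\frac{1}{2\varphi(n)}\|z\|^2 + b\big(s,X_{\cdot\wedge s},u^\star(s,X_{\cdot\wedge s},Z^{n-1}_s)\big)\cdot z$ has quadratic growth in $z$, uniformly in $n$ since $\varphi(n)\ge 1$ and $b$ is bounded, while the terminal datum $F^n(X)$ is of sub-quadratic growth in $\|X\|_\infty$ and hence has finite exponential moments of every order by \Cref{lem:estimX}. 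Existence, uniqueness, and the asserted estimate then follow from \citeauthor*{briand2008quadratic} \cite{briand2008quadratic}, exactly as in the proof of \Cref{lemma:unif} (these are the same equations).

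For the identification $V^n = Y^n_0$, fix $\alpha\in\cA$. Since $b$ is bounded, $\PP^{\alpha,n}$ is a genuine probability measure and $\widetilde B^\alpha_t \coloneqq B_t - \int_0^t\big(b(s,X_{\cdot\wedge s},u^\star(s,X_{\cdot\wedge s},Z^{n-1}_s)) + \alpha_s/\varphi^{1/2}(n)\big)\mathrm{d}s$ is a $\PP^{\alpha,n}$--Brownian motion; rewriting the BSDE in terms of it gives $\mathrm{d}Y^n_s = \big(-\frac{1}{2\varphi(n)}\|Z^n_s\|^2 + \varphi^{-1/2}(n)\,\alpha_s\cdot Z^n_s\big)\mathrm{d}s + Z^n_s\cdot\mathrm{d}\widetilde B^\alpha_s$. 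Completing the square,
\[
-\frac{1}{2\varphi(n)}\|Z^n_s\|^2 + \frac{\alpha_s\cdot Z^n_s}{\varphi^{1/2}(n)} = \frac12\|\alpha_s\|^2 - \frac12\Big\|\alpha_s - \frac{Z^n_s}{\varphi^{1/2}(n)}\Big\|^2 \le \frac12\|\alpha_s\|^2,
\]
with equality if and only if $\alpha_s = Z^n_s/\varphi^{1/2}(n)$. Integrating over $[0,T]$, using $Y^n_T = F^n(X) = G(X) - \int_0^T L(s,X_{\cdot\wedge s},u^\star(s,X_{\cdot\wedge s},Z^{n-1}_s))\mathrm{d}s$, and rearranging, I obtain
\[
Y^n_0 \ge G(X) - \int_0^T\Big(\tfrac12\|\alpha_s\|^2 + L\big(s,X_{\cdot\wedge s},u^\star(s,X_{\cdot\wedge s},Z^{n-1}_s)\big)\Big)\mathrm{d}s - \int_0^T Z^n_s\cdot\mathrm{d}\widetilde B^\alpha_s.
\]
Taking $\PP^{\alpha,n}$--expectations and noting that the stochastic integral is a $\PP^{\alpha,n}$--supermartingale (indeed a true martingale after localising and passing to the limit using the exponential moment bounds on $Y^n$ and the $\mathbb{L}^p$-bounds on $\int_0^T\|Z^n_s\|^2\mathrm{d}s$), one gets $Y^n_0 \ge \E^{\PP^{\alpha,n}}[\cdots]$ for every $\alpha$, hence $Y^n_0 \ge V^n$. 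Conversely, the choice $\alpha^n_s \coloneqq Z^n_s/\varphi^{1/2}(n)$ turns all the above inequalities into equalities, so $V^n \ge Y^n_0$ as soon as $\alpha^n\in\cA$; admissibility of $\alpha^n$ is read off the Cole--Hopf representation \eqref{eq:def.Pn}, since $\exp(Y^n_t/\varphi(n))\,(\mathrm{d}\PP^n/\mathrm{d}\PP)|_{\cF_t}$ is, up to a constant, the uniformly integrable $\PP$--martingale $\E^\P[\exp(F^n(X)/\varphi(n))\,(\mathrm{d}\PP^n/\mathrm{d}\PP)\,|\,\cF_t]$ whose stochastic logarithm is precisely the exponent appearing in the density of $\PP^{\alpha^n,n}$.

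Equivalently --- and this is the shortest route --- one may observe that $\tfrac12\E^{\PP^{\alpha,n}}[\int_0^T\|\alpha_s\|^2\mathrm{d}s] = \varphi(n)\,\cH(\PP^{\alpha,n}|\PP^n)$ for the measure $\PP^n$ of \eqref{eq:def.Pn}, so that $V^n = \varphi(n)\sup_{\alpha\in\cA}\{\E^{\PP^{\alpha,n}}[F^n(X)/\varphi(n)] - \cH(\PP^{\alpha,n}|\PP^n)\}$; the Gibbs variational principle (Donsker--Varadhan) then gives $V^n = \varphi(n)\log\E^{\PP^n}[\exp(F^n(X)/\varphi(n))] = Y^n_0$, the last equality being the Cole--Hopf linearisation already used in \eqref{eq:def.Pn}. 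In either approach the only genuinely technical point --- and the main obstacle --- is the justification of the expectation/stochastic-integral exchange, equivalently the admissibility of the optimiser $\alpha^n$: the drift shift under $\PP^{\alpha,n}$ is only locally bounded in $\alpha$, so Novikov's criterion is unavailable and one must localise and invoke the uniform exponential moment estimates of \Cref{lemma:unif} (together with the finite exponential moments of $F^n(X)$, which ensure the Donsker--Varadhan supremum is attained within $\{\PP^{\alpha,n}:\alpha\in\cA\}$). Everything else is routine.
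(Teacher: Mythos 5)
Your overall strategy --- the Fenchel--Young/completion-of-squares duality $\frac{a\cdot z}{\varphi^{1/2}(n)}-\frac12\|a\|^2\le\frac{\|z\|^2}{2\varphi(n)}$ combined with a verification argument identifying $Y^n_0$ with the value --- is the same duality that drives the paper's proof, and your well-posedness step (quadratic BSDE with terminal condition having exponential moments, via \cite{briand2008quadratic} and \Cref{lem:estimX}) matches the paper. But the implementation differs at the two places where the real work sits, and in both places your version has a gap. First, for the inequality $Y^n_0\ge \E^{\PP^{\smalltext{\alpha},\smalltext{n}}}[\cdots]$ you rewrite the BSDE for $Y^n$ directly under $\PP^{\alpha,n}$ and justify the vanishing of $\E^{\PP^{\smalltext{\alpha},\smalltext{n}}}\big[\int_0^T Z^n_s\cdot\mathrm d\widetilde B^\alpha_s\big]$ by localisation ``using the exponential moment bounds on $Y^n$ and the $\mathbb L^p$-bounds on $\int_0^T\|Z^n_s\|^2\mathrm ds$''. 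Those bounds (\Cref{lemma:unif}) are under $\P$; for an arbitrary $\alpha\in\Ac$ the density $\mathrm d\PP^{\alpha,n}/\mathrm d\P$ is only known to be in $\mathbb L^1(\P)$, so the estimates do not transfer and the uniform integrability needed to pass to the limit along the localising sequence is not available. The paper's proof is structured precisely to avoid this: it represents the reward of each $\alpha$ as the initial value $Y^{n,\alpha}_0$ of a \emph{linear} BSDE (solvable with the right integrability by the very definition of $\Ac$), and compares $Y^{n,\alpha}$ with $Y^n$ via Girsanov with the \emph{bounded} kernel $b$ only, for which the change of measure has moments of every order.

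Second, and more importantly, your lower bound hinges on the admissibility of the exact optimiser $\alpha^n=Z^n/\varphi^{1/2}(n)$, which you claim to ``read off'' the Cole--Hopf representation \eqref{eq:def.Pn}. The computation you invoke is correct as far as it goes: $\cE\big(\int_0^\cdot b_s\cdot\mathrm dB_s\big)\exp(Y^n/\varphi(n))$ is, up to a constant, the closed $\P$-martingale $\E^\P[\exp(F^n(X)/\varphi(n))\,\mathrm d\PP^n/\mathrm d\P\,|\,\cF_\cdot]$, whose stochastic logarithm has kernel $b+Z^n/\varphi(n)$; this shows that $\cE\big(\int_0^\cdot(b_s+Z^n_s/\varphi(n))\cdot\mathrm dB_s\big)$ is a uniformly integrable martingale, i.e.\ that $\PP^{\alpha^\smalltext{n},n}$ is a genuine probability measure. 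But membership $\alpha^n\in\Ac$ requires, by the definition of $\Ac$, that $\cE\big(\int_0^\cdot Z^n_s\varphi^{-1/2}(n)\cdot\mathrm dB_s\big)$ be a uniformly integrable martingale, which is a different stochastic exponential (different scaling, no $b$), and uniform integrability of stochastic exponentials is not preserved under such modifications of the kernel without further argument ($Z^n$ is not known to be bounded or BMO under \Cref{assump:standing} alone). So the attainment step, as written, does not place the maximiser inside the feasible set, and the Donsker--Varadhan variant inherits exactly the same attainment issue. The paper sidesteps the exact maximiser altogether: it uses a Bene\v s-type measurable selection to produce, for each $\eps>0$, an $\eps$-optimal $\alpha^\eps\in\Ac$, runs the same linear-BSDE comparison to get $V^n\ge Y^{n,\alpha^\smalltext{\eps}}_0\ge Y^n_0-\eps T$, and concludes by letting $\eps\to0$. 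If you want to keep your route, you either need to prove the uniform integrability of $\cE\big(\int_0^\cdot\alpha^n_s\cdot\mathrm dB_s\big)$ itself, or replace the exact optimiser by nearly optimal admissible controls as the paper does.
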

\begin{proof}
The existence and uniqueness of a solution to the BSDE defined in the statement is immediate from \cite[Corollary 6]{briand2008quadratic}. Now for any $\alpha\in\Ac$, consider the BSDE
\[
Y^{n,\alpha}_t = F^n(X) + \int_t^T\bigg(\frac{\alpha_s}{\varphi^{1/2}(n)}\cdot Z^{n,\alpha}_s-\frac{1}{2}\|\alpha_s\|^2+ b\big(s,X_{\cdot\wedge s},u^\star(s, X_{\cdot\wedge s}, Z^{n-1}_s)\big)\cdot Z^{n,\alpha}_s\bigg)\mathrm{d}s - \int_t^TZ_s^{n,\alpha}\cdot \mathrm{d}B_s, \; t\in [0,T], \; \PP\text{\rm--a.s.}
\]
Notice that this has a unique solution with appropriate integrability, since the BSDE is linear, and we have all the required integrability from the definition of $\Ac$. Besides, it is also clear that
\[
Y^{n,\alpha}_0=\E^{\PP^{\smalltext{\alpha}\smalltext{,}\smalltext{n}}}\bigg[G(X) - \int_0^T\bigg(\frac12\|\alpha_t\|^2 + L\big(t,X_{\cdot\wedge t}, u^\star(t, X_{\cdot\wedge t}, Z^{n-1}_t)\big)\bigg)\mathrm{d}t \bigg]
\]
Now it is immediate that we have
\[
Y^{n,\alpha}_t-Y^n_t\leq \int_t^Tb\big(s,X_{\cdot\wedge s},u^\star(s, X_{\cdot\wedge s}, Z^{n-1}_s)\big)\cdot (Z^{n,\alpha}_s-Z^n_s)\mathrm{d}s-\int_0^T(Z^{n,\alpha}_s-Z^n_s)\cdot\mathrm{d}B_s,\; t\in[0,T],\; \P\text{\rm--a.s.},
\]
which implies that $Y^{n,\alpha}_0\leq Y^n_0$ because we can change measures using Girsanov since $b$ is bounded. In turn, we have $Y^{n}_0\geq V^n$. 

\medskip
Conversely, a standard measurable selection theorem (see for instance \citeauthor*{benes1971existence} \cite{benes1971existence}) ensures that for any $\eps>0$, we can find some $\alpha^\eps\in\Ac$ such that
\[
\frac{1}{2\varphi(n)}\|Z_s^n\|^2\leq \frac{\alpha^{\eps}_s}{\varphi^{1/2}(n)}\cdot Z^{n}_s-\frac{1}{2}\|\alpha_s^\eps\|^2+\eps,\; \d s\otimes\P\text{\rm--a.e.}
\]
We thus conclude as above that $V^n-Y^n_0\geq Y^{n,\alpha^\smalltext{\eps}}_0-Y^n_0\geq \eps T,$ and thus that $V^n=Y^n_0$ by arbitrariness of $\eps$.
\end{proof}
\end{appendix}

{\footnotesize
\bibliography{bibliographyDylan}}

\end{document}